\documentclass[11pt]{amsart}

 
 \usepackage{amsmath,amsthm,amsfonts,amssymb,verbatim}
 \usepackage{url}
 \usepackage{graphicx}
 \usepackage[all]{xy}
 \usepackage{wrapfig}
 \usepackage{pinlabel}
 \usepackage{subfigure}
 
 \setlength{\oddsidemargin}{0.25in} 
\setlength{\evensidemargin}{0.25in}


\setlength{\textwidth}{6in}

 \setlength{\parindent}{0pt}
 \setlength{\parskip}{10pt}

\def\co{\colon\thinspace}

\newcommand{\tD}{\mathbf{D}}
\newcommand{\tA}{\mathbf{A}}
\newcommand{\tT}{\mathbf{T}}
\newcommand{\tTz}{\mathbf{T_0}}
\newcommand{\tTo}{\mathbf{T_1}}
\newcommand{\tTzn}{\mathbf{T}^{\boxtimes n}_{\mathbf{0}}}
\newcommand{\tTon}{\mathbf{T}^{\boxtimes n}_{\mathbf{1}}}

\newcommand{\bu}{\bullet}

\newcommand{\sA}{\mathcal{A}}
\newcommand{\sB}{\mathcal{B}}

\newcommand{\sI}{\mathcal{I}}

\newcommand{\bZ}{\mathbb{Z}}
\newcommand{\bQ}{\mathbb{Q}}
\newcommand{\bR}{\mathbb{R}}

\newcommand{\bI}{\mathbb{I}}
\newcommand{\bF}{\mathbb{F}}

\newcommand{\pq}{\frac{p}{q}}



\newcommand{\HFhat}{\widehat{\operatorname{HF}}}

\newcommand{\rk}{\operatorname{rk}}

\newcommand{\CFhat}{\widehat{\operatorname{CF}}}


\newtheorem{theorem}{Theorem}

\newtheorem{definition}[theorem]{Definition}
\newtheorem{question}[theorem]{Question}
\newtheorem{corollary}[theorem]{Corollary}
\newtheorem{proposition}[theorem]{Proposition}
\newtheorem{remark}[theorem]{Remark}
\newtheorem{lemma}[theorem]{Lemma}
\newtheorem{conjecture}[theorem]{Conjecture}


\title[L-spaces and left-orderability]{On L-spaces and left-orderable fundamental groups}
\date{July 25, 2011}

\author[Steven Boyer]{Steven Boyer}
\thanks{Steven Boyer was partially supported by NSERC grant RGPIN 9446-2008}
\address{D\'epartement de Math\'ematiques, Universit\'e du Qu\'ebec \`a Montr\'eal, 201 avenue du Pr\'esident-Kennedy, Montr\'eal, QC H2X 3Y7.}
\email{boyer.steven@uqam.ca}
\urladdr{http://www.cirget.uqam.ca/boyer/boyer.html}

\author[Cameron McA. Gordon]{Cameron McA. Gordon}
\thanks{Cameron Gordon was partially supported by NSF grant DMS-0906276.}
\address{Department of Mathematics, University of Texas at Austin, 1 University Station, Austin, TX 78712.}
\email{gordon@math.utexas.edu}
\urladdr{http://www.ma.utexas.edu/text/webpages/gordon.html}

\author[Liam Watson]{Liam Watson}
\thanks{Liam Watson was partially supported by an NSERC postdoctoral fellowship}
\address{Department of Mathematics, UCLA, 520 Portola Plaza, Los Angeles, CA 90095.}
\email{lwatson@math.ucla.edu}
\urladdr{http://www.math.ucla.edu/~lwatson}

\begin{document}

\begin{abstract}
Examples suggest that there is a correspondence between L-spaces and $3$-manifolds whose fundamental groups cannot be left-ordered. In this paper we establish the equivalence of these conditions for several large classes of such manifolds. In particular, we prove that they are equivalent for any closed, connected, orientable, geometric $3$-manifold that is non-hyperbolic, a family which includes all closed, connected, orientable Seifert fibred spaces. We also show that they are equivalent for the $2$-fold branched covers of non-split alternating links. To do this we prove that the fundamental group of the $2$-fold branched cover of an alternating link is left-orderable if and only if it is a trivial link with two or more components. We also show that this places strong restrictions on the representations of the fundamental group of an alternating knot complement with values in $\hbox{Homeo}_+(S^1)$. 
\end{abstract}

\maketitle

\section{Introduction}

In this paper all $3$-manifolds will be assumed to be orientable. 

Heegaard Floer homology is a package of 3-manifold invariants introduced by Ozsv\'ath and Szab\'o \cite{OSz2004-properties,OSz2004-invariants}. There are various versions of this theory, however for our purposes it will suffice to consider the simplest of these: the {\em hat} version, denoted $\HFhat$.  

\begin{definition} A closed, connected $3$-manifold $Y$ is an L-space if it is a rational homology sphere with the property that $\rk\HFhat(Y)=\left|H_1(Y;\bZ)\right|$. \end{definition}

L-spaces form the class of manifolds with minimal Heegaard Floer homology and are of interest for various reasons. For instance, such manifolds do not admit co-orientable taut foliations \cite[Theorem 1.4]{OSz2004-genus}. It is natural to ask if there are characterizations of L-spaces which do not reference Heegaard Floer homology (cf. \cite[Question 11]{OSz2005-survey}). Examples of L-spaces include lens spaces as well as all connected sums of manifolds with elliptic geometry \cite[Proposition 2.3]{OSz2005-lens}. These examples also enjoy the property that their fundamental groups cannot be {\it left-ordered}:

\begin{definition} A non-trivial group $G$ is called left-orderable if there exists a strict total ordering $<$ on its elements such that $g<h$ implies $fg<fh$ for all elements $f,g,h\in G$. \end{definition}

While the trivial group obviously satisfies this criterion, in this paper we will adopt the convention that it is {\it not} left-orderable. 

The left-orderability  of $3$-manifold groups has been studied in work of Boyer, Rolfsen and Wiest \cite{BRW2005}. An argument of Howie and Short \cite[Lemma 2]{HS1985} shows that the fundamental group of  an irreducible $3$-manifold $M$ with positive first Betti number is locally indicable, hence left-orderable \cite{BH1972}. More generally, such a group is left-orderable if it admits an epimorphism to a left-orderable group \cite[Theorem 1.1(1)]{BRW2005}. 

The aim of this note is to establish a connection between L-spaces and the left-orderability of their fundamental groups. Given the results we obtain and those obtained elsewhere \cite{CLW2011,CW2010,CW2011,Peters2009}, we formalise a question which has received attention in the recent literature in the following conjecture.

\begin{conjecture} \label{main conjecture}
An irreducible rational homology $3$-sphere is an L-space if and only if its fundamental group is not left-orderable. 
\end{conjecture}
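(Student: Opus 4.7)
The plan is to prove the two directions separately, using very different invariants, and---given current technology---to argue class by class rather than uniformly.

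For the direction \emph{L-space implies $\pi_1(Y)$ not left-orderable}, I would work by contrapositive. Assume $\pi_1(Y)$ admits a left-order. Every countable left-orderable group embeds in $\mathrm{Homeo}_+(\bR)$, and the plan would be to upgrade the resulting faithful action on $\bR$ to a circular action on $S^1$ with vanishing Euler class, and then to a co-orientable taut foliation of $Y$ via a Calegari--Dunfield style construction. Tautness would let the Ozsv\'ath--Szab\'o obstruction \cite[Theorem 1.4]{OSz2004-genus} preclude $Y$ from being an L-space, giving the contradiction. The key missing ingredient is passing from a left-order to a taut foliation, which is precisely the hard step in the rational-homology-sphere setting.

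For the reverse direction \emph{not an L-space implies $\pi_1(Y)$ left-orderable}, I would hunt for a nontrivial homomorphism from $\pi_1(Y)$ into a known left-orderable target such as $\widetilde{\mathrm{PSL}}_2(\bR)$ or the universal circle of a taut foliation, and then pull back the order via \cite[Theorem 1.1(1)]{BRW2005}. The engines I would try are Culler--Shalen character-variety arguments---producing nonabelian $\mathrm{SL}_2(\bR)$-representations from the nonvanishing of $\HFred(Y)$---together with case-specific group-theoretic constructions. Uniformly this looks out of reach, so the realistic plan is the one the paper pursues: handle Seifert fibred rational homology spheres via the rational Euler number of the base orbifold together with the known classification of Seifert L-spaces, and handle $2$-fold branched covers of non-split alternating links via the combinatorics of the diagram, which already controls both when the branched cover is an L-space and the representation theory of its fundamental group.

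The principal obstacle is the lack of any direct dictionary between Heegaard Floer homology and orderability of $\pi_1$. The natural intermediary is taut foliations---L-spaces forbid them by \cite{OSz2004-genus}, and foliations plausibly generate circular, hence linear, orderings of $\pi_1$---but the implication \emph{taut foliation implies left-orderable $\pi_1$} is itself open outside special cases, so Conjecture \ref{main conjecture} is only as accessible as the current foliation-to-order bridge. I expect this bridge to be the main barrier in any uniform attack, while class-specific combinatorics can be used to sidestep it for the Seifert fibred and alternating double branched cover families addressed in the paper.
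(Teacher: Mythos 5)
The statement you were asked to prove is labelled a \emph{conjecture} in the paper, and the paper does not prove it; it establishes the equivalence only in special cases (Seifert fibred spaces, torus semi-bundles and Sol manifolds, and $2$-fold branched covers of non-split alternating links) and explicitly flags the general statement as open. So there is no proof in the paper for your sketch to be compared against, and you were right not to manufacture one: you correctly recognised the statement as conjectural and instead described the current state of the problem.

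Your account of the two directions matches the paper's own framing. For the forward direction, the paper points to \cite[Theorem 1.4]{OSz2004-genus} (L-spaces admit no co-orientable taut foliation) and raises, as open questions, precisely the bridge you identify---whether a co-orientable taut foliation on an irreducible rational homology sphere forces a left-orderable fundamental group, with Calegari--Dunfield giving only a left-orderable finite-index subgroup in the atoroidal case. For the reverse direction your instinct to produce nonabelian representations into left-orderable targets and pull back the order via \cite[Theorem 1.1(1)]{BRW2005} is exactly the engine behind the paper's Propositions on surgeries on alternating knots and the figure eight knot. Two small amendments: first, the paper's Seifert fibred case is not purely about the rational Euler number---it splits into base $S^2$ (Lisca--Stipsicz plus Boyer--Rolfsen--Wiest, via horizontal foliations) and base $P^2$ (a new inductive triad argument showing these are all L-spaces, matching the known non-left-orderability); second, you omit the torus semi-bundle/Sol case, which the paper handles with a bordered Heegaard Floer computation of the twisted $I$-bundle over the Klein bottle and a surgery-triad induction. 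Neither of these is a flaw in your sketch of the conjecture's status, but they are worth adding if you want your survey of the accessible cases to match the paper's scope.
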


It has been asked by Ozsv\'ath and Szab\'o whether L-spaces can be characterized as those closed, connected $3$-manifolds admitting no co-orientable taut foliations. Thus, in the context of Conjecture \ref{main conjecture} it is interesting to consider the following open questions: Does the existence of a co-orientable taut foliation on an irreducible rational homology $3$-sphere imply the manifold has a left-orderable fundamental group? Are the two conditions equivalent? Calegari and Dunfield have shown that the existence of a co-orientable taut foliation on an irreducible atoroidal rational homology $3$-sphere $Y$ implies that $\pi_1(Y)$ has a left-orderable finite index subgroup \cite[Corollary 7.6]{CD2003}. Of course, an affirmative answer to Conjecture \ref{main conjecture}, combined with  \cite[Theorem 1.4]{OSz2004-genus}, would prove that the existence of a co-orientable taut foliation implies left-orderable fundamental group. 

Our first result verifies the conjecture in the case of Seifert fibred spaces. 

\begin{theorem}\label{thm:sf} 
Suppose $Y$ is a closed, connected, Seifert fibred 3-manifold. Then $Y$ is an L-space if and only if $\pi_1(Y)$ is not left-orderable. 
\end{theorem}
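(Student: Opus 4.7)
The plan is to reduce both sides of the equivalence to a single geometric property of $Y$, namely the non-existence of a horizontal (everywhere transverse to the fibers) foliation. If each of the conditions ``$Y$ is an L-space'' and ``$\pi_1(Y)$ is not left-orderable'' turns out to be equivalent to the absence of a horizontal foliation, then the theorem follows.

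First I would dispose of the easy reductions. When $b_1(Y)>0$, $Y$ is not an L-space by definition and $\pi_1(Y)$ is left-orderable: either $Y=S^2\times S^1$ with $\pi_1(Y)\cong\bZ$, or $Y$ is irreducible with positive first Betti number, so the Howie--Short argument from the introduction applies. The case of a non-orientable base orbifold is handled separately: one exhibits torsion in $\pi_1(Y)$ coming from the non-orientability, which rules out left-orderability, and a direct inspection of Seifert invariants verifies that the corresponding rational homology spheres (essentially prism manifolds and connected sums of lens spaces) are L-spaces. Thus one reduces to the case where $Y$ is a rational homology sphere Seifert fibered over an orientable base orbifold $\mathcal{B}$.

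In this principal case I would invoke the theorem of Lisca--Stipsicz (building on \ons) that such a $Y$ is an L-space if and only if it admits no horizontal foliation. The existence of a horizontal foliation is governed by an explicit numerical condition on the normalized Seifert invariants, worked out by Eisenbud--Hirsch--Neumann, Jankins--Neumann, and completed by Naimi; geometrically it reflects the possible rotation numbers of a representation of $\pi_1^{\operatorname{orb}}(\mathcal{B})$ into $\operatorname{Homeo}_+(S^1)$. For the left-orderability side I would apply the framework of Boyer--Rolfsen--Wiest: a horizontal foliation yields, via its holonomy, a representation of $\pi_1(Y)$ into the left-orderable group $\widetilde{\operatorname{Homeo}}_+(S^1)$, and one extracts a left-order by pullback after checking the representation is suitably non-trivial on the fiber. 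Conversely, a left-order on $\pi_1(Y)$ must descend through the central extension $1\to\langle h\rangle\to\pi_1(Y)\to\pi_1^{\operatorname{orb}}(\mathcal{B})\to 1$ given by the Seifert fibration, producing an action of $\pi_1^{\operatorname{orb}}(\mathcal{B})$ on the line whose compactification yields a horizontal foliation meeting the Naimi criterion.

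The main obstacle, I expect, will be this last step: extracting from an abstract left-order on $\pi_1(Y)$ the concrete numerical data required to verify the horizontal foliation inequalities. The delicate regime is that of small Seifert fibered spaces over $S^2$ with exactly three exceptional fibers, where the criterion becomes a subtle constraint on rotation numbers that must be read off from the dynamics of any given left-ordering. A case analysis of the resulting triangle-group quotients, combined with Naimi's realization theorem, should suffice to close the argument and complete the proof.
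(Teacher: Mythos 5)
Your reduction to the case where $Y$ is a rational homology sphere, and your handling of the orientable base case ($\mathcal{B} = S^2(a_1,\ldots,a_n)$) via the common intermediary of horizontal foliations---citing Lisca--Stipsicz for the L-space equivalence and Boyer--Rolfsen--Wiest for the left-orderability equivalence---is exactly what the paper does. Note, though, that there is no need to rederive the rotation-number analysis of Eisenbud--Hirsch--Neumann and Jankins--Neumann--Naimi, nor the extraction of numerical data from a left-order; both cited theorems are stated so as to slot together, so that case is essentially a two-line citation argument.

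The genuine gap is in your treatment of the non-orientable base case, which is in fact the real content of the theorem. You claim that when $\mathcal{B} = P^2(a_1,\ldots,a_n)$, ``a direct inspection of Seifert invariants verifies that the corresponding rational homology spheres (essentially prism manifolds and connected sums of lens spaces) are L-spaces.'' This is false: Seifert fibred rational homology spheres with base orbifold $P^2(a_1,\ldots,a_n)$ for $n \geq 2$ form a large class of manifolds with arbitrarily many exceptional fibres, typically admitting $\widetilde{SL_2}$ geometry, and they are neither prism manifolds nor connected sums of lens spaces. The paper explicitly remarks that establishing these are L-spaces ``yields an interesting class of L-spaces that, to the best of our knowledge, has not received attention in the literature.'' Proving it requires a genuine argument: the paper proceeds by induction on the number $n$ of exceptional fibres, drilling out an exceptional fibre to obtain a Seifert piece $N$ over a M\"{o}bius band orbifold whose rational longitude is the fibre slope $\phi_0$, verifying (using $|H_1(N(\gamma))| = D_N\Delta(\gamma,\phi_0)$) that $(\mu,\mu+\phi_0,2\mu+\phi_0)$ is a triad for any dual slope $\mu$, and then applying the L-space surgery triangle (Proposition 2.1 of \cite{OSz2005-lens}, together with its continued-fraction extension) to cover every filling slope other than $\phi_0$. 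Without this inductive surgery argument, the theorem does not close.

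A secondary concern: your claim that ``one exhibits torsion in $\pi_1(Y)$ coming from the non-orientability, which rules out left-orderability'' is not reliable. These groups are generally torsion-free (e.g.\ when the manifold has $\widetilde{SL_2}$ or Nil geometry). The correct mechanism, from \cite{BRW2005}, is that the orientation-reversing element of $\pi_1^{\mathrm{orb}}(P^2(\ldots))$ inverts the fibre class, and this is incompatible with a left-invariant order; the paper simply cites \cite{BRW2005} for this direction.
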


The proof of this theorem in the case where the base orbifold of $Y$ is orientable depends on results of Boyer, Rolfsen and Wiest \cite{BRW2005} and Lisca and Stipsicz \cite{LS2007-iii}. This case of  Theorem \ref{thm:sf} has been independently observed by Peters \cite{Peters2009}. There are, on the other hand, many Seifert fibred rational homology $3$-spheres with non-orientable base orbifolds, and it is shown in \cite{BRW2005} that such manifolds have non-left-orderable fundamental groups. Theorem \ref{thm:sf} therefore yields an interesting class of L-spaces that, to the best of our knowledge, has not received attention in the literature (see also \cite[Theorem 3.32]{Watson-PhD}). 

The set of torus semi-bundles provides another interesting family of $3$-manifolds in which to examine the relationship between L-spaces and non-left-orderable fundamental groups. Such manifolds are unions of two twisted $I$-bundles over the Klein bottle and are either Seifert fibred or admit a Sol geometry. Indeed, let $N$ be a twisted $I$-bundle over the Klein bottle and set $T = \partial N$. There are distinguished slopes $\phi_0, \phi_1$ on $T$ corresponding to the two Seifert structures supported by $N$. Here $\phi_0$ denotes the fibre slope of the structure with base orbifold a M\"{o}bius band and $\phi_1$ that with base orbifold $D^2(2,2)$. The general torus semi-bundle is homeomorphic to an identification space $W(f) = N \cup_f N$ where $f: T \to T$ is a homeomorphism. Further, $W(f)$ is 

\indent \hspace{.5cm} $\bullet$ a Seifert fibre space if and only if $f$ identifies $\phi_i$ with $\phi_j$ for some $i, j \in \{0, 1\}$. 
 
\indent \hspace{.5cm} $\bullet$ a Sol manifold if and only if $f$ does not identify any $\phi_i$ with any $\phi_j$ for $i, j \in \{0, 1\}$. 
 
\indent \hspace{.5cm} $\bullet$ a rational homology $3$-sphere if and only if $f$ does not identify $\phi_0$ with $\phi_0$. 

Thus the generic torus semi-bundle is a rational homology sphere and a Sol manifold. 

\begin{theorem}\label{thm:semi-fibre} 
Suppose that $W$ is a torus semi-bundle. Then the  following statements are equivalent:  

\vspace{-.2cm} $(a)$ $H_1(W; \mathbb Q) \cong 0$. 

\vspace{-.2cm} $(b)$ $\pi_1(W)$ is not left-orderable. 

\vspace{-.2cm} $(c)$ $W$ is an L-space. 
\end{theorem}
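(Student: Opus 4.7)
The plan is to establish the easy implications $(c) \Rightarrow (a)$ and $(b) \Rightarrow (a)$ first, then prove $(a) \Rightarrow (b)$ and $(a) \Rightarrow (c)$ by splitting into the Seifert and Sol subcases listed in the classification preceding the theorem.

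\textbf{Easy directions.} The implication $(c) \Rightarrow (a)$ is immediate from the definition of L-space. For $(b) \Rightarrow (a)$, note that $W$ is irreducible, being a union of the irreducible pieces $N$ along an incompressible torus; hence if $H_1(W;\bQ) \neq 0$, the Howie--Short argument \cite{HS1985} recalled in the introduction shows $\pi_1(W)$ is locally indicable, hence left-orderable, contradicting $(b)$.

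\textbf{Seifert case of $(a) \Rightarrow (b),(c)$.} The gluing $f$ identifies $\phi_i$ with $\phi_j$ for some $(i,j) \in \{0,1\}^2$, with $(0,0)$ ruled out by $(a)$. When $(i,j) \in \{(0,1),(1,0)\}$ the resulting Seifert fibration on $W$ has non-orientable base orbifold, so by \cite{BRW2005} $\pi_1(W)$ is not left-orderable, and Theorem \ref{thm:sf} then gives the L-space property. When $(i,j) = (1,1)$ the Seifert base is $S^2(2,2,2,2)$ with four order-$2$ exceptional fibers, and I would finish by computing the Seifert invariants explicitly in terms of the gluing data, applying the rational homology sphere constraint, and invoking the Lisca--Stipsicz-style classification used in the proof of Theorem \ref{thm:sf}.

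\textbf{Sol case of $(a) \Rightarrow (b),(c)$.} Here $(a)$ is automatic. For $(c)$, I would compute $\HFhat(W)$ either from a Heegaard diagram adapted to the semi-bundle decomposition $W = N \cup_f N$ or by presenting $W$ as surgery on an appropriate link and applying L-space surgery techniques. For $(b)$, I would exploit the amalgamated-product structure $\pi_1(W) \cong \pi_1(N) *_{\pi_1(T)} \pi_1(N)$ together with the canonical index-$2$ torus-bundle cover $\widetilde{W} \to W$ arising from the orientation covers of the two pieces: the Sol condition on $f$ forces $\widetilde{W}$ to have Anosov monodromy, and the deck involution should impose conjugation relations on $\pi_1(W)$ that obstruct any left order.

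\textbf{Main obstacle.} The Sol subcase is the heart of the proof: the non-left-orderability step cannot be reduced to the Howie--Short argument or to any known Seifert fibered result, and instead requires extracting a concrete algebraic obstruction from the gluing matrix; simultaneously the L-space computation requires genuine Heegaard Floer input for each choice of Sol gluing, not covered by previous work.
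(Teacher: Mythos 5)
Your outline of the easy directions is fine, and the classification of the Seifert subcases by which $\phi_i$ is identified with which $\phi_j$ is correct. But the two hard implications are not actually carried out, and in both places the key idea is missing. For $(a)\Rightarrow(b)$, you do not need (and should not attempt) a case split between Seifert and Sol: the paper invokes the proof of \cite[Proposition 9.1(1)]{BRW2005}, which shows directly that if $\pi_1(W(f))$ is left-orderable then $f_*$ must send $\phi_0$ to $\pm\phi_0$, i.e.\ the lower-left entry $c$ of $f_*$ vanishes, forcing $b_1(W)>0$. This handles every torus semi-bundle uniformly. Your Sol-case sketch, by contrast, would not work as stated: the index-two torus-bundle cover $\widetilde W$ has $b_1\geq 1$ and hence \emph{left-orderable} fundamental group, so no ``conjugation relations coming from the deck involution'' can obstruct an order at the level of $\widetilde W$ — the obstruction lives in the extension and requires the explicit semi-bundle group analysis of \cite{BRW2005}. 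Your Seifert $(1,1)$ subcase similarly only gestures at a computation that is never reduced to a check.

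The decisive gap is $(a)\Rightarrow(c)$ in the Sol case, which you correctly flag as the heart of the theorem but leave entirely open. The paper's route is specific: compute the bordered invariant $\widehat{\operatorname{CFD}}(N)$ of the twisted $I$-bundle from an explicit Heegaard diagram (Proposition~\ref{prp:typeD}), prove that $\widehat{\operatorname{CFD}}(N)$ is invariant under change of framing by the Dehn twist $\tau_0$ along $\phi_0$ (Proposition~\ref{prp:phi-zero}), and use this to show that when $c=1$ the manifold $W(f)$ has the same $\HFhat$ rank as a Seifert fibred torus semi-bundle — which is already known to be an L-space by Theorem~\ref{thm:sf} — giving the base case $\rk\HFhat=16$ (Theorem~\ref{thm:rk16}). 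The general case $|c|\geq 2$ is then handled by induction on $|c|$ via a surgery triad along a curve on the splitting torus $T$ (Lemmas~\ref{L-space} and~\ref{all columns-rows}, Proposition~\ref{prop: inductive case}). None of these three ingredients — the $\widehat{\operatorname{CFD}}(N)$ computation, the $\tau_0$-invariance that reduces $c=1$ to the Seifert case, or the triad induction on $|c|$ — appears in your proposal, so the implication $(a)\Rightarrow(c)$, and hence the theorem, is not established.
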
 
A key step in the proof of this result requires a computation of the bordered Heegaard Floer homology \cite{LOT2009} of the twisted $I$-bundle over the Klein bottle. An immediate consequence of it verifies Conjecture \ref{main conjecture} for Sol manifolds. 

\begin{corollary}\label{crl:Sol}
Suppose that $Y$ is a closed, connected $3$-manifold with $Sol$  geometry. Then $Y$ is an L-space if and only if $\pi_1(Y)$ is not left-orderable. 
\qed
\end{corollary}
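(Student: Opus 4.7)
The plan is to use the standard classification of closed, orientable Sol $3$-manifolds: every such $Y$ is either a torus bundle over $S^1$ with Anosov monodromy, or a torus semi-bundle $W(f)$ in which the gluing $f$ does not identify $\phi_i$ with $\phi_j$ for any $i,j\in\{0,1\}$. Once this dichotomy is in hand, the two cases are treated separately.

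For the semi-bundle case there is nothing new to prove: Theorem \ref{thm:semi-fibre} directly asserts the equivalence of being an L-space and having non-left-orderable fundamental group for torus semi-bundles, so the conclusion is immediate. In particular, for a Sol semi-bundle that fails to be a rational homology sphere, condition (a) of Theorem \ref{thm:semi-fibre} fails, hence so do (b) and (c), meaning $Y$ is not an L-space and $\pi_1(Y)$ is left-orderable.

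For the torus bundle case, $Y$ fibres over $S^1$, so $b_1(Y)\geq 1$ and $Y$ is not a rational homology sphere; in particular $Y$ is not an L-space. On the other hand, Sol manifolds are aspherical and hence irreducible, so the Howie–Short result \cite[Lemma 2]{HS1985} implies $\pi_1(Y)$ is locally indicable, and therefore left-orderable by \cite{BH1972}; alternatively, the epimorphism $\pi_1(Y)\to\bZ$ induced by the fibration combined with \cite[Theorem 1.1(1)]{BRW2005} gives left-orderability directly. Thus both conditions in the corollary are false and the equivalence holds vacuously.

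The only step that might require comment is the dichotomy into bundles and semi-bundles; this is classical and follows from the fact that Sol geometry forces $Y$ to be finitely covered by a torus bundle with Anosov monodromy, combined with the analysis of the possible involutions. No computational obstacle arises, since all of the Heegaard Floer input has already been absorbed into Theorem \ref{thm:semi-fibre}.
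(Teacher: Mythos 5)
Your argument is correct and is precisely the reasoning the paper leaves implicit: the corollary is stated with a terminal \qed as an immediate consequence of Theorem \ref{thm:semi-fibre}, and the intended content is exactly your dichotomy of closed orientable Sol manifolds into torus bundles (where both conditions fail vacuously, since $b_1 = 1$ and irreducibility plus the surjection to $\bZ$ gives left-orderability) and torus semi-bundles (where Theorem \ref{thm:semi-fibre} applies directly). One small remark: by the paper's discussion in the introduction, a Sol torus semi-bundle never identifies $\phi_0$ with $\phi_0$ and hence is automatically a rational homology sphere, so the sub-case you mention of a Sol semi-bundle failing to be a $\bQ$-homology sphere is vacuous -- this is harmless but can be omitted.
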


Theorem \ref{thm:sf} and Corollary \ref{crl:Sol} combine to give the following general statement:

\begin{theorem}
Suppose that $Y$ is a closed, connected, geometric, non-hyperbolic $3$-manifold. Then $Y$ is an L-space if and only if $\pi_1(Y)$ is not left-orderable. \qed\end{theorem}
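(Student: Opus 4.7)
The proof is essentially an exercise in dividing into cases according to the eight Thurston geometries. By hypothesis $Y$ is not hyperbolic, so it supports one of the seven remaining geometries: $S^3$, $\mathbb{E}^3$, $S^2\times\mathbb{R}$, $\mathbb{H}^2\times\mathbb{R}$, $\widetilde{SL_2(\mathbb{R})}$, Nil, or Sol. The plan is to reduce every one of these cases to a result already established in the paper.

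First I would observe that among these seven geometries, six of them force $Y$ to be Seifert fibred: any closed, connected, orientable manifold modeled on $S^3$, $\mathbb{E}^3$, $S^2\times\mathbb{R}$, $\mathbb{H}^2\times\mathbb{R}$, $\widetilde{SL_2(\mathbb{R})}$, or Nil admits a Seifert fibration (this is standard geometrization of Seifert manifolds; see Scott's survey). For $Y$ in any of these six cases, Theorem \ref{thm:sf} applies directly and gives the equivalence of $Y$ being an L-space and $\pi_1(Y)$ being non-left-orderable.

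The remaining case is that of Sol geometry, which is precisely the content of Corollary \ref{crl:Sol}: a closed, connected Sol manifold is an L-space if and only if its fundamental group is not left-orderable. Combining these two statements across the seven-fold case division completes the proof. There is no substantive obstacle here: the theorem is a clean packaging of Theorem \ref{thm:sf} and Corollary \ref{crl:Sol} together with the standard geometric classification identifying which closed, orientable, geometric non-hyperbolic $3$-manifolds are Seifert fibred. The only thing to verify carefully is the (well-known) assertion that the six non-Sol, non-hyperbolic geometries always yield Seifert fibred manifolds in the closed orientable setting, which is why the two prior results together exhaust all possibilities.
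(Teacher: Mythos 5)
Your proof is correct and is exactly the paper's intended argument: the theorem is stated as an immediate consequence of Theorem \ref{thm:sf} and Corollary \ref{crl:Sol}, relying on the standard fact (from Scott's survey, already cited) that a closed orientable geometric non-hyperbolic $3$-manifold is either Seifert fibred or a Sol manifold. Nothing to add.
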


Ozsv\'ath and Szab\'o determined a large family of L-spaces - the $2$-fold covers of $S^3$ branched over a non-split alternating link \cite[Proposition 3.3]{OSz2005-branch}. Conjecture \ref{main conjecture} can be established in this setting as well. We prove: 

\begin{theorem}\label{thm:alternating} The fundamental group of the $2$-fold branched cover of an alternating link $L$ is left-orderable if and only if $L$ is a trivial link with two or more components. In particular, the fundamental group of the $2$-fold branched cover of a non-split alternating link is not left-orderable.\end{theorem}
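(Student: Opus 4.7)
The ``if'' direction is immediate: for the trivial link $L$ with $n \geq 2$ components, $\Sigma(L) \cong \#^{n-1}(S^1 \times S^2)$ and $\pi_1(\Sigma(L)) \cong F_{n-1}$, a non-trivial free group, hence left-orderable.

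For the converse, the plan is first to reduce to the non-split case. If $L$ decomposes as a split union of non-split sublinks $L_1, \ldots, L_k$, then $\Sigma(L) \cong \Sigma(L_1) \# \cdots \# \Sigma(L_k) \# \#^{k-1}(S^1 \times S^2)$, so $\pi_1(\Sigma(L))$ is a free product of the $\pi_1(\Sigma(L_j))$ with a free group of rank $k-1$. A free product is left-orderable precisely when every factor is left-orderable or trivial. Since $\pi_1(\Sigma(L_j))$ is trivial only when $L_j$ is an unknot (by the resolution of the Smith conjecture), the non-split case of the theorem would force each $L_j$ to be an unknot, and hence $L$ to be a trivial link. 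This reduces the problem to showing that $\pi_1(\Sigma(L))$ is not left-orderable whenever $L$ is a non-split alternating link.

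For the non-split case I would use that a countable, non-trivial, left-orderable group admits a faithful action on $\mathbb{R}$ by orientation-preserving homeomorphisms, and proceed by contradiction. From a faithful representation $\rho \colon \pi_1(\Sigma(L)) \to \hbox{Homeo}_+(\mathbb{R}) \subset \hbox{Homeo}_+(S^1)$, together with the $\mathbb{Z}/2$ covering involution of $\Sigma(L) \to S^3$, one builds a non-trivial representation $\tilde\rho \colon \pi_1(S^3 \setminus L) \to \hbox{Homeo}_+(S^1)$ in which meridians act as elements of order two. The argument then proceeds by induction on the crossing number of a reduced alternating diagram $D$ for $L$: at each crossing, the Wirtinger relation becomes a rigid conjugation identity among the meridional images in $\hbox{Homeo}_+(S^1)$, and using the planarity, the alternating condition, and the checkerboard combinatorics one shows that either the representation factors through that of a simpler link obtained by a rational-tangle resolution (contradicting the inductive hypothesis), or else all meridians share a common fixed point and $\tilde\rho$ is trivial. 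The base case is that of $2$-bridge alternating links, whose branched double covers are lens spaces with cyclic fundamental group, handled directly by Theorem \ref{thm:sf}.

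The principal obstacle is controlling the inductive step, since there is no skein-type exact triangle for left-orderability analogous to the \os triangle for $\HFhat$. One must therefore argue combinatorially, exploiting the rigidity imposed on meridional $S^1$-homeomorphisms by alternating Wirtinger relations to propagate local fixed-point data into a global contradiction. The precise way in which the alternating hypothesis rigidifies the picture---in contrast to a general link diagram, for which such an argument would fail---is the crux of the argument, and is also what gives rise to the representation-theoretic consequences for alternating knot complements mentioned in the abstract.
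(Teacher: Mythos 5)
Your reduction to the non-split case is fine and matches the paper's (the paper uses Vinogradov's theorem that a free product of non-trivial groups is left-orderable iff each factor is; your phrasing is essentially equivalent). The easy direction is also fine. But the heart of the matter---why a non-split alternating link has non-left-orderable $\pi_1(\Sigma(L))$---is where your proposal has a genuine gap, and you essentially say so yourself: you describe an intended induction on crossing number via $\hbox{Homeo}_+(S^1)$-representations and rational-tangle resolutions, then acknowledge that "the precise way in which the alternating hypothesis rigidifies the picture $\ldots$ is the crux of the argument" without supplying it. An outline that names the crux but does not prove it is not a proof. There is also a technical wrinkle in your setup: passing from a faithful $\rho\colon \pi_1(\Sigma(L)) \to \hbox{Homeo}_+(\mathbb R)$ to a representation $\tilde\rho\colon\pi_1(S^3\setminus L)\to\hbox{Homeo}_+(S^1)$ with meridians of order two requires extending by the covering involution, which is not automatic; the paper's Theorem \ref{thm:square trivial} actually goes in the opposite direction (from a representation of the knot group with $\rho(\mu^2)=1$ to one of $\pi_1(\Sigma(K))$), and is a \emph{consequence} of Theorem \ref{thm:alternating}, not an ingredient in its proof.

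The paper's argument for the non-split case is quite different and entirely elementary. It uses Wada's presentation: $\pi(D)\cong\pi_1(\Sigma(L))*\mathbb Z$ has generators $a_1,\dots,a_n$ indexed by arcs of an alternating diagram $D$, with a relation $a_k a_j^{-1}a_i a_j^{-1}$ at each crossing. Given a left-ordering, this relation forces $a_j^{-1}a_i = (a_j^{-1}a_k)^{-1}$, so at each crossing exactly one of $a_i<a_j<a_k$, $a_k<a_j<a_i$, or $a_i=a_j=a_k$ holds. This partial orientation is recorded on the edges of the checkerboard (Tait) graph $\Gamma(D)$ in $S^2$. Three lemmas show (i) a cycle bounding a white region has only unoriented edges, (ii) edges at a source or sink are unoriented, (iii) a connected planar semi-oriented graph without sources or sinks but with some oriented edge contains a cycle with an oriented edge. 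Together these force all edges to be unoriented, hence all $a_i$ equal, hence $\pi(D)$ abelian---which by Vinogradov forces $\pi_1(\Sigma(L))$ to be trivial. No induction on crossing number, no tangle resolutions, and no $\hbox{Homeo}_+(S^1)$ representation theory is used in this step; the alternating condition enters exactly through the checkerboard/Tait graph combinatorics. If you want to pursue your representation-theoretic program, you would need to actually prove the rigidity claim you flag as the crux; as it stands, your proposal does not establish the theorem.
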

Note that generically, the $2$-fold branched cover of an alternating link $L$ is hyperbolic. 

Josh Greene \cite{Greene} has found an alternate proof of Theorem \ref{thm:alternating}. There is also relevant recent work of Ito on 2-fold branched covers \cite{Ito2011} and Levine and Lewallen  on {\em strong} L-spaces (manifolds $Y$ for which $\CFhat(Y)\cong\HFhat(Y)$)\cite{LL}. 

The results above relate L-spaces and manifolds with non-left-orderable fundamental groups. Next we consider examples of non-L-spaces with left-orderable fundamental groups. An interesting family of non-L-spaces has been constructed by Ozsv\'ath and Szab\'o - those manifolds obtained by non-trivial surgery on a hyperbolic alternating knot \cite[Theorem 1.5]{OSz2005-lens}. 

Recall that a {\it special alternating knot} is a knot which has an alternating diagram each of whose Seifert circles bounds a complementary region of the diagram. Equivalently, it is an alternating knot such that either each of the crossings in a reduced diagram for the knot is positive or each is negative. 

\begin{proposition}  \label{prop:alt} 
Let $K$ be a prime alternating knot in $S^3$. 

$(1)$  If $q \ne 0$ and $S^3_{p/q}(K)$ is Seifert fibred, then $\pi_1(S^3_{p/q}(K))$ is left-orderable.

$(2)$ If $K$ is not a special alternating knot, then $\pi_1(S^3_{1/q}(K))$ is left-orderable for all non-zero integers $q$.

$(3)$ If $K$ is a special alternating knot, then either all crossings in a reduced diagram for $K$ are positive and $\pi_1(S^3_{1/q}(K))$ is left-orderable for all positive integers $q$, or all crossings in the diagram are negative and $\pi_1(S^3_{1/q}(K))$ is left-orderable for all negative integers $q$.
\end{proposition}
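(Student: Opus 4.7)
Part (1) should follow by combining Theorem \ref{thm:sf}, which gives the L-space/left-orderability dichotomy for Seifert fibred spaces, with the result of Ozsv\'ath-Szab\'o \cite[Theorem 1.5]{OSz2005-lens} that a non-trivial Dehn surgery on a hyperbolic alternating knot is never an L-space. For hyperbolic $K$ the two results together imply (1) at once. The only remaining prime alternating knots are the torus knots $T(2, 2n+1)$, and there Moser's classification describes all Seifert fibred fillings, so the argument reduces to a direct check of which of them are L-spaces and an application of Theorem \ref{thm:sf} to the rest.

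For parts (2) and (3), the plan is to exhibit, for each admissible $q$, a non-trivial homomorphism from $\pi_1(S^3_{1/q}(K))$ to a left-orderable group, so that by \cite[Theorem 1.1(1)]{BRW2005} the source is itself left-orderable. Concretely I would try to produce a representation $\rho \co \pi_1(S^3 \setminus K) \to \widetilde{\mathrm{PSL}(2, \bR)}$ which is non-trivial and which sends the slope $\mu\lambda^q$ to the identity, so that it descends to the surgered manifold; the target is canonically left-orderable, so its image gives a non-trivial left-orderable quotient of $\pi_1(S^3_{1/q}(K))$.

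To build $\rho$, I would work inside the $\mathrm{PSL}(2, \bR)$ character variety of $\pi_1(S^3 \setminus K)$. Following the Culler--Shalen philosophy, one considers an arc of irreducible characters emanating from the abelian locus and tracks how its restriction to the boundary torus varies. Arranging the boundary holonomy to lie inside the circle subgroup $\mathrm{PSO}(2) \subset \mathrm{PSL}(2, \bR)$ permits a lift to $\widetilde{\mathrm{PSL}(2, \bR)}$, and fixing the trace of $\mu\lambda^q$ so that its image is a central element of the universal cover that can be killed produces the required vanishing on the surgery slope. The sign of $1/q$ attainable in this way is controlled by the sign of the signature of $K$: for a non-special alternating knot both signs can be realised and the conclusion holds for all $q \ne 0$, while for a special alternating knot the definiteness of the Seifert form forces the admissible sign of $q$ to agree with the sign of the crossings.

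The hard part will be the Culler--Shalen style analysis needed to show that the slope $1/q$ is actually attained along a suitable arc of characters for every integer $q$ of the correct sign. This will rest on controlling the ideal points of the character curve together with the fact that for an alternating knot the Alexander polynomial does not vanish at $-1$ --- its value there being the non-zero determinant of the knot --- so that the character variety is positive-dimensional near the abelian locus in a way that makes the required deformation possible.
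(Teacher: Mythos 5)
Part (1) of your proposal matches the paper's argument: Ozsv\'ath--Szab\'o's \cite[Theorem 1.5]{OSz2005-lens} (no non-trivial surgery on a hyperbolic alternating knot is an L-space) combined with Theorem \ref{thm:sf}. The paper states this in one line and, like you, implicitly restricts to the hyperbolic case; your remark about torus knots is an honest flag of a subtlety that both treatments leave aside. (Be careful, though: for the trefoil, many Seifert fillings are lens spaces, so a ``direct check'' there would not simply confirm the stated conclusion.)

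For parts (2) and (3) your proposal diverges completely from the paper and, more importantly, is not a proof. You sketch a Culler--Shalen style plan to construct a non-trivial $\widetilde{\mathrm{PSL}(2,\bR)}$-representation killing the slope $\mu\lambda^q$, and you explicitly say ``the hard part will be the Culler--Shalen style analysis needed to show that the slope $1/q$ is actually attained.'' That hard part is the entire content of the statement, and nothing you've written addresses it: you have no control over the trace of the longitude along an arc of characters for a general alternating knot, no argument that real characters persist, and no mechanism to hit \emph{every} integer slope of the correct sign. The only place the paper runs an argument of this flavour is Proposition \ref{prop:f8}, and there it is only for the figure eight knot, relies on the explicit parametrised family of $PSL_2(\bR)$-representations from \cite[Example 3.13]{BRW2005}, and even so requires Khoi's delicate result (Proposition \ref{section}) to control the lift to $\widetilde{SL_2}$. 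Replicating this for an arbitrary alternating knot is out of reach here.

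The paper proves (2) and (3) by a different, and softer, route: Roberts' theorem supplies a taut foliation on $S^3_{1/q}(K)$ exactly under the hypotheses of (2) and (3) (the dichotomy between special and non-special alternating knots, and the sign of $q$, are built into her construction, which is what you were trying to extract from the signature). Menasco and Patton, together with the boundary-slope computations for two-bridge and three-tangle Montesinos knots, show the surgered manifold is atoroidal. Since $H_1 = 0$ the foliation is co-oriented, so the Calegari--Dunfield universal circle construction gives a faithful representation of $\pi_1(S^3_{1/q}(K))$ into $\widetilde{\hbox{Homeo}}_+(S^1)$, and left-orderability follows from \cite[Theorem 1.1(1)]{BRW2005}. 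You should study this foliation-to-ordering pipeline; it is both more robust and closer to the conjectural picture relating taut foliations, L-spaces and orderability than the representation-variety approach you sketched.
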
 

In the case of the figure eight knot we can say a little more. 

\begin{proposition} \label{prop:f8}
Let $K$ be the figure eight knot. If $-4 < r = \pq < 4$, then $\pi_1(S^3_r(K))$ is left-orderable.
\end{proposition}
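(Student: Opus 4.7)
The plan is, for each rational $r \in (-4, 4)$, to exhibit a non-trivial homomorphism from $\pi_1(S^3_r(K))$ to the left-orderable group $\widetilde{PSL}(2,\mathbb{R})$ and then invoke \cite[Theorem 1.1(1)]{BRW2005}. The figure eight is amphichiral, so $S^3_r(K) \cong S^3_{-r}(K)$ and it suffices to handle $0 \le r < 4$. The slope $r = 0$ is immediate: $H_1(S^3_0(K);\mathbb{Z}) \cong \mathbb{Z}$ and $S^3_0(K)$ is irreducible, so the Howie--Short result recalled in the introduction applies. The substance of the proposition is therefore the open interval $0 < r < 4$.

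For this range, I would start from a standard two-generator presentation $\pi_1(S^3 \setminus K) = \langle a, b \mid aw = wb \rangle$ with $w = b^{-1}aba^{-1}$ (both $a, b$ being meridians), and parameterize $PSL(2,\mathbb{R})$-representations by the invariants $(\operatorname{tr}\rho(a), \operatorname{tr}\rho(ab))$ subject to the defining relation. This cuts out a real algebraic arc of generically non-abelian representations $\rho_t$ of the knot group. For each $\rho_t$, I would evaluate the images $\rho_t(\mu)$ of the meridian and $\rho_t(\lambda)$ of the canonical longitude; the representation descends to $\pi_1(S^3_r(K))$ precisely when $\rho_t(\mu^p \lambda^q) = 1$ for $r = p/q$. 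The claim at the heart of the argument is that, as $t$ varies along the arc, the peripheral slope killed by $\rho_t$ sweeps out the whole interval $(-4, 4)$, so that every target $r$ is realized.

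Next, lift each such $\rho_r$ to $\widetilde{\rho}_r : \pi_1(S^3_r(K)) \to \widetilde{PSL}(2,\mathbb{R})$. Since the family is continuous and contains representations arbitrarily close to the trivial one, a vanishing Euler class / continuous lifting argument produces the lift. Because $\widetilde{PSL}(2,\mathbb{R})$ embeds in $\mathrm{Homeo}_+(\mathbb{R})$ and is therefore left-orderable, and because $\widetilde{\rho}_r$ has non-abelian and hence non-trivial image, the quotient of $\pi_1(S^3_r(K))$ onto the image of $\widetilde{\rho}_r$ is an epimorphism to a left-orderable group. Irreducibility of $S^3_r(K)$ for $r \in (0,4)$ (known from the surgery description of the exceptional fillings of the figure eight) then lets us apply \cite[Theorem 1.1(1)]{BRW2005}.

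The main obstacle, and the source of the sharp bound $|r| < 4$, is the computation embedded in the middle step: one must identify the real arc of $PSL(2,\mathbb{R})$-characters of the figure eight knot group and verify that the peripheral-slope function along it is surjective onto $(-4, 4)$. The endpoints $\pm 4$ are expected to correspond to the interface between elliptic and hyperbolic meridional images along the arc, where the representation degenerates, so a careful analysis of the real locus of the figure eight character variety and of the peripheral curve map is where the genuine work lies; the lifting and the appeal to BRW are comparatively routine once the arc is understood.
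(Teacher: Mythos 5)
You have the right skeleton — a real arc of $PSL_2(\mathbb{R})$-characters of the figure eight knot group whose peripheral-slope function covers the interval, lifted to $\widetilde{SL_2}$ so that BRW Theorem 1.1(1) applies — and that is indeed the route the paper takes (using the family $\rho_s$ from \cite[Example 3.13]{BRW2005} and the function $g$ with $[0,4)$ in its image, plus amphichirality). But the step you dismiss as ``comparatively routine'' is precisely where the substantive difficulty and the new content of the paper's argument lie, and your sketch of that step has a genuine gap.

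The problem is the lifting. You propose to lift a representation $\rho_r : \pi_1(S^3_r(K)) \to PSL_2(\mathbb{R})$ to $\widetilde{PSL_2}(\mathbb{R})$ by a ``vanishing Euler class / continuous lifting argument,'' noting that the family contains representations near the trivial one. But for $r = p/q \ne 0$, $H^2(S^3_r(K);\mathbb{Z}) \cong \mathbb{Z}/p$ is not zero, so the Euler class of $\rho_r$ need not vanish; and since the Euler class takes values in a finite group that changes with $r$, there is no obvious continuity argument available — the manifolds themselves vary, and for a fixed slope only isolated values of the parameter $s$ give representations that factor through the surgered group. What one can do routinely is lift the knot group representation $\rho_s : \pi_1(M) \to PSL_2(\mathbb{R})$ to $\tilde\rho_s : \pi_1(M) \to \widetilde{SL_2}$, because $H^2(M;\mathbb{Z}) = 0$. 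The issue is then to show that $\tilde\rho_s(\mu^p\lambda^q)$ is the \emph{identity} in $\widetilde{SL_2}$, not merely an element of the central kernel $\mathcal{K} \cong \mathbb{Z}$ — otherwise $\tilde\rho_s$ does not descend to $\pi_1(S^3_r(K))$. The paper resolves this with Proposition \ref{section} (attributed to Khoi, but re-proved here because Khoi gave only a MAPLE reference): after conjugating and twisting by a class in $H^1(\pi_1(M);\mathcal{K})$, one can arrange that $\tilde\rho_s(\pi_1(\partial M))$ lies in the one-parameter subgroup $(-1,1)\times\{0\}$ of $\widetilde{SL_2} \cong \Delta \times \mathbb{R}$, which meets $\mathcal{K}$ only in the identity. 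Establishing Proposition \ref{section} requires Wood's bound $|\omega| < 3\pi/2$ on the translation part of a commutator in $\widetilde{SL_2}$, together with Lemma \ref{sl2} (that for any $SL_2(\mathbb{R})$-character of the figure eight group the longitude has positive real eigenvalues, proved via the trace-coordinate description of the character variety and Goldman's $SU(2)$ criterion). None of this is routine, and your proposal does not supply it or an equivalent. Until you produce an argument that the peripheral relator dies in $\widetilde{SL_2}$ and not just in $PSL_2(\mathbb{R})$, the proof is incomplete.
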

Clay, Lidman and Watson have shown that the fundamental group of $\pm 4$-surgery on the figure eight knot is left-orderable \cite[\S 4]{CLW2011}. 

It is evident that a non-trivial subgroup of a left-orderable group is left-orderable. Here is a question which arises naturally from the ideas of this paper. 

\begin{question}
{\it Is a rational homology $3$-sphere finitely covered by an L-space necessarily an L-space?}
\end{question}

We point out that it follows from Theorem \ref{thm:semi-fibre} that there exists a class of examples of 2-fold covers that behave in this way: each $\bQ$-homology sphere with Sol geometry admits a 2-fold cover which is a $\bQ$-homology sphere with Sol geometry. 

The non-left-orderability of the fundamental group of the $2$-fold branched cover of a prime knot in the $3$-sphere has an interesting consequence for certain representations of the fundamental group of its complement. 

\begin{theorem} \label{thm:square trivial}
Let $K$ be a prime knot in the $3$-sphere and suppose that the fundamental group of its $2$-fold branched cyclic cover is not left-orderable. If $\rho: \pi_1(S^3 \setminus K) \to \hbox{Homeo}_+(S^1)$ is a homomorphism such that $\rho(\mu^2) = 1$ for some meridional class $\mu$ in $\pi_1(S^3 \setminus K)$, then the image of $\rho$ is either trivial or isomorphic to $\mathbb Z/2$.
\end{theorem}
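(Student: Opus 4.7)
My plan is to factor $\rho$ through the orbifold fundamental group $\Gamma := \pi_1(S^3\setminus K)/\langle\!\langle\mu^2\rangle\!\rangle$ and then show that the restriction $\bar\rho$ of $\rho$ to the normal subgroup $\pi_1(\Sigma_2(K))\trianglelefteq\Gamma$ is trivial. Since $\Gamma/\pi_1(\Sigma_2(K))\cong\bZ/2$, triviality of $\bar\rho$ forces the image of $\rho$ to have at most two elements, giving the conclusion.

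To analyse $\bar\rho$, I would lift $\rho$ along the universal covering $\pi\colon\widetilde{\mathrm{Homeo}_+(S^1)}\to\mathrm{Homeo}_+(S^1)$. The universal cover embeds in $\mathrm{Homeo}_+(\bR)$ as the group of $\bZ$-equivariant orientation-preserving homeomorphisms, and so is left-orderable. The obstruction to a lift lies in $H^2(\pi_1(S^3\setminus K);\bZ)$, which vanishes by Lefschetz duality since $H_1(S^3\setminus K,\partial(S^3\setminus K);\bZ)=0$; thus a lift $\tilde\rho$ exists. With $T$ a generator of the central $\bZ=\ker\pi$, we have $\tilde\rho(\mu)^2=T^n$ for some integer $n$. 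Because $\pi_1(S^3\setminus K)^{\mathrm{ab}}=\bZ\cdot[\mu]$ and $T$ is central in $\widetilde{\mathrm{Homeo}_+(S^1)}$, twisting $\tilde\rho$ by a homomorphism $\pi_1(S^3\setminus K)\to\langle T\rangle$ alters $n$ by any even integer, so I may assume $n\in\{0,1\}$.

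If $n=0$, then $\tilde\rho(\mu)$ is an involution in the torsion-free group $\mathrm{Homeo}_+(\bR)$, hence trivial; so $\rho(\mu)=1$, and since $\mu$ normally generates $\pi_1(S^3\setminus K)$ the image of $\rho$ is trivial. Now suppose $n=1$ and, for a contradiction, that $\bar\rho$ is nontrivial. I would aim to produce a nontrivial homomorphism from $\pi_1(\Sigma_2(K))$ into a left-orderable group; since primality of $K$ makes $\Sigma_2(K)$ irreducible, \cite[Theorem 1.1(1)]{BRW2005} would then force $\pi_1(\Sigma_2(K))$ to be left-orderable, contradicting the hypothesis. Such a homomorphism arises from a global fixed point of $\bar\rho(\pi_1(\Sigma_2(K)))$ on $S^1$: cutting $S^1$ at the fixed point embeds the image in the left-orderable group $\mathrm{Homeo}_+(\bR)$. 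The main obstacle is producing this global fixed point, for which one must exploit both the structure of $\bar\rho(\pi_1(\Sigma_2(K)))$ as a subgroup generated by products of pairs of fixed-point-free involutions in the conjugacy class of $\rho(\mu)$, and the conjugation action of $\rho(\mu)$ that normalises it.
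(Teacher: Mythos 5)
Your framework is sound up to a point: factoring $\rho$ through the orbifold group $\Gamma$ and aiming to show that the restriction to the index-two subgroup $\pi_1(\Sigma(K))$ is trivial is exactly the right skeleton, and lifting along $\widetilde{\hbox{Homeo}}_+(S^1) \to \hbox{Homeo}_+(S^1)$ via vanishing of $e(\rho) \in H^2(M_K;\mathbb Z) \cong 0$ is also what the paper does. But the substantive case -- which you label $n=1$ -- is left unresolved, and in fact that is the \emph{only} case that matters: if $\rho$ is non-trivial then $\rho(\mu)$ is a fixed-point-free involution of $S^1$ (an orientation-preserving involution with a fixed point is the identity), so $\tilde\rho(\mu)^2$ is a \emph{non-trivial} central element and $n$ is necessarily odd. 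Your $n = 0$ case is just the case $\rho = 1$. So the content of the theorem lives entirely in the step you describe as ``the main obstacle,'' namely producing a global fixed point for $\bar\rho(\pi_1(\Sigma(K)))$.

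This fixed-point strategy is also asking for strictly more than is needed or provable. Vanishing of an Euler class gives a lift to $\widetilde{\hbox{Homeo}}_+(S^1) \leq \hbox{Homeo}_+(\mathbb R)$, but a lift does \emph{not} imply a global fixed point on $S^1$: $\widetilde{\hbox{Homeo}}_+(S^1)$ acts on $S^1$ without a global fixed point. What the paper actually proves is that the Euler class of the \emph{induced} representation $\psi : \pi_1(\Sigma(K)) \to \hbox{Homeo}_+(S^1)$ vanishes, by observing that $e(\widetilde\rho) = p^*(e(\rho)) = 0$ where $\widetilde\rho = \rho|\pi_1(\widetilde M_K)$, and then that $i^* : H^2(\Sigma(K);\mathbb Z) \to H^2(\widetilde M_K;\mathbb Z)$ is \emph{injective} -- this is the key topological input, coming from the exact sequence of the pair $(\Sigma(K), \widetilde M_K)$ and the fact that $H_1(\widetilde M_K;\mathbb Z) \cong H_1(\Sigma(K);\mathbb Z) \oplus \mathbb Z$ with the $\mathbb Z$ factor generated by the filling slope. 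Since $e(\widetilde\rho) = i^*(e(\psi))$, injectivity gives $e(\psi) = 0$, so $\psi$ lifts to $\hbox{Homeo}_+(\mathbb R)$ and \cite[Theorem 1.1]{BRW2005} forces $\psi$ (hence $\widetilde\rho$) to be trivial. Your argument is missing this injectivity step entirely; it is exactly what replaces the unresolvable global-fixed-point hunt.
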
 

\begin{corollary} \label{cor:alt square-trivial}
Let $K$ be an alternating knot and $\rho: \pi_1(S^3 \setminus K) \to \hbox{Homeo}_+(S^1)$ a homomorphism. If $\rho(\mu^2) = 1$ for some meridional class $\mu$ in $\pi_1(S^3 \setminus K)$, then the image of $\rho$ is either trivial or isomorphic to $\mathbb Z/2$.   
\end{corollary}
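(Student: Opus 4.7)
The plan is to deduce this corollary from Theorem \ref{thm:square trivial}, which yields the desired conclusion whenever $K$ is prime and $\pi_1(\Sigma_2(K))$ is not left-orderable. For an alternating knot the non-left-orderability of $\pi_1(\Sigma_2(K))$ is supplied (directly, or after a connect-sum decomposition) by Theorem \ref{thm:alternating}; only the primality restriction in Theorem \ref{thm:square trivial} forces a short case analysis.

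First I would dispose of the unknot: there $\pi_1(S^3 \setminus K) \cong \mathbb Z$ is generated by $\mu$, so $\rho(\mu^2)=1$ immediately forces $|\mathrm{im}\,\rho| \le 2$. Next, if $K$ is a nontrivial prime alternating knot, then $K$ is a non-split alternating link (being a single component), so Theorem \ref{thm:alternating} shows that $\pi_1(\Sigma_2(K))$ is not left-orderable, and Theorem \ref{thm:square trivial} applies verbatim.

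The remaining case is composite $K$. I would write $K = K_1 \# \cdots \# K_n$ as a connect sum of prime knots; by Menasco's classical theorem each $K_i$ is itself alternating, and none is the unknot. The standard van Kampen decomposition of the complement of a connect sum gives
\[ \pi_1(S^3\setminus K) \;\cong\; \pi_1(S^3\setminus K_1)\,*_{\langle \mu\rangle}\,\cdots\,*_{\langle\mu\rangle}\,\pi_1(S^3\setminus K_n), \]
with amalgamation along the common meridian $\mu$, which simultaneously represents a meridian of each $K_i$. Restricting $\rho$ to $\pi_1(S^3\setminus K_i)$ gives a representation still satisfying $\rho(\mu^2)=1$, so by the previous case its image is contained in $\{1,\rho(\mu)\}$. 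Since the subgroups $\pi_1(S^3\setminus K_i)$ generate $\pi_1(S^3\setminus K)$, the image of $\rho$ is likewise contained in $\{1,\rho(\mu)\}$, which is trivial or $\mathbb Z/2$ according to the order of $\rho(\mu)$.

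The only delicate point is the composite case, where I need to invoke Menasco's theorem on the alternating nature of prime factors and to verify that the amalgamating class in the van Kampen decomposition is precisely the meridian $\mu$ of the hypothesis. Once these are in hand, the corollary is a formal consequence of Theorems \ref{thm:alternating} and \ref{thm:square trivial}.
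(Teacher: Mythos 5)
Your strategy (reduce to prime factors via the connect-sum/van Kampen decomposition and apply Theorems \ref{thm:alternating} and \ref{thm:square trivial}) is the paper's strategy, and the unknot and prime cases are handled as in the paper. However, there is a genuine gap in the composite case, right where you flagged a ``delicate point,'' and as stated the step would not go through for three or more prime factors.

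The issue is the sentence ``with amalgamation along the common meridian $\mu$, which simultaneously represents a meridian of each $K_i$.'' For $n\ge 3$ there is no single element of $\pi_1(S^3\setminus K)$ that lies in every $\pi_1(S^3\setminus K_i)$ as a meridian; the decomposition has $n-1$ distinct amalgamating classes, one for each decomposing sphere, and the paper accordingly writes
$\pi_1(S^3\setminus K)\cong\pi_1(S^3\setminus K_1)*_{\mu_1=\mu_2}\pi_1(S^3\setminus K_2)*_{\mu_2'=\mu_3}\cdots*_{\mu_{n-1}'=\mu_n}\pi_1(S^3\setminus K_n)$,
where $\mu_i$ and $\mu_i'$ are two (in general distinct, only conjugate) meridional elements of $\pi_1(S^3\setminus K_i)$. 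After applying Theorem \ref{thm:square trivial} to each factor you therefore only know that each $\rho|\pi_1(S^3\setminus K_i)$ has image some $\{1,g_i\}$ with $g_i^2=1$; you have not yet shown the $g_i$ coincide, which is exactly what you need for ``the image of $\rho$ is contained in $\{1,\rho(\mu)\}$.''

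The fix is the extra step the paper supplies: since each $\rho|\pi_1(S^3\setminus K_i)$ has abelian image, it factors through $H_1(S^3\setminus K_i)\cong\mathbb Z$, which is generated by \emph{any} meridian of $K_i$; hence $\rho(\mu_i)=\rho(\mu_i')^{\pm1}=\rho(\mu_i')$ (the sign is irrelevant because these are involutions). Chaining this along the amalgamations forces all the $\rho(\mu_i)$ to equal a single element, and the whole image of $\rho$ is then generated by that element. Incorporating this abelianization argument closes the gap and recovers the paper's proof.
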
 

Alan Reid has pointed out the following consequence of this corollary.

\begin{corollary} \label{cor:trace-field}
Suppose that $K$ is an alternating knot and let $\mathcal{O}_K(2)$ denote the orbifold with underlying set $S^3$ and singular set $K$ with cone angle $\pi$. Suppose further that $\mathcal{O}_K(2)$ is hyperbolic. If the  trace field of $\pi_1(\mathcal{O}_K(2))$ has a real
embedding, then it must determine a $PSU(2)$-representation. In other words, the quaternion algebra 
associated to $\pi_1(\mathcal{O}_K(2))$ is ramified at that embedding.
\end{corollary}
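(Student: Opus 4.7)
The plan is to argue by contradiction using Corollary \ref{cor:alt square-trivial}. Fix the discrete faithful holonomy representation $\rho_0 : \pi_1(\mathcal{O}_K(2)) \to PSL(2, \mathbb{C})$ of the hyperbolic structure, write $F$ for its trace field, and let $A$ denote the associated quaternion algebra over $F$. Suppose $\sigma : F \to \mathbb{R}$ is a real embedding at which $A$ is \emph{not} ramified, so $A \otimes_{F, \sigma} \mathbb{R} \cong M_2(\mathbb{R})$; the goal is to derive a contradiction.

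Under this hypothesis, tensoring $\rho_0$ with $\sigma$ produces a Galois-conjugate representation $\rho_\sigma : \pi_1(\mathcal{O}_K(2)) \to PSL(2, \mathbb{R})$ characterized by $\mathrm{tr}\,\rho_\sigma(\gamma) = \sigma(\mathrm{tr}\,\rho_0(\gamma))$ for every $\gamma$. Composing $\rho_\sigma$ with the faithful action of $PSL(2, \mathbb{R})$ on $\partial \mathbb{H}^2 = S^1$ by orientation-preserving homeomorphisms, and precomposing with the quotient map $\pi_1(S^3 \setminus K) \twoheadrightarrow \pi_1(\mathcal{O}_K(2))$, one obtains a homomorphism $\rho : \pi_1(S^3 \setminus K) \to \hbox{Homeo}_+(S^1)$ with $\rho(\mu^2) = 1$. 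Corollary \ref{cor:alt square-trivial} then forces the image of $\rho$, and hence also that of $\rho_\sigma$, to have order at most $2$.

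An image of order at most $2$ in $PSL(2, \mathbb{R})$ consists only of the identity and possibly a single involution, so every trace in the image lies in $\{0, \pm 2\}$. Since $\sigma(F)$ is the subfield of $\mathbb{R}$ generated by these traces, $\sigma(F) = \mathbb{Q}$, and therefore $F = \mathbb{Q}$. Consequently every trace of $\rho_0$ is real. But the image of $\rho_0$ is infinite and non-elementary, being the holonomy of the closed hyperbolic orbifold $\mathcal{O}_K(2)$, and a classical fact states that a non-elementary subgroup of $PSL(2, \mathbb{C})$ all of whose traces are real must be conjugate into either $PSL(2, \mathbb{R})$ or $PSU(2)$. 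The compact alternative is excluded because the image is infinite, while the $PSL(2, \mathbb{R})$ alternative contradicts the fact that $\mathcal{O}_K(2)$ is a genuine hyperbolic $3$-orbifold rather than a $2$-orbifold. This contradiction completes the argument.

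The main obstacle is really a packaging issue: one must cleanly translate the corollary's arithmetic assertion into the representation-theoretic statement that some tensored-up $\rho_\sigma$ lands in $PSL(2, \mathbb{R})$, and invoke the classical ``real traces imply conjugate into $PSL(2, \mathbb{R})$ or $PSU(2)$'' dichotomy to exclude the Fuchsian possibility. Once those two standard pieces are in place, the application of Corollary \ref{cor:alt square-trivial} is essentially immediate.
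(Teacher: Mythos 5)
Your proof is correct and follows essentially the same route as the paper: the splitting of the quaternion algebra at $\sigma$ yields an irreducible $PSL_2(\mathbb{R})$ representation of $\pi_1(\mathcal{O}_K(2))$, which via the boundary action on $S^1$ and pullback along $\pi_1(S^3\setminus K)\twoheadrightarrow\pi_1(\mathcal{O}_K(2))$ gives a $\hbox{Homeo}_+(S^1)$ representation killing $\mu^2$, so Corollary \ref{cor:alt square-trivial} forces its image to have order at most two. The paper's proof stops there, since the Galois conjugate of an irreducible $SL_2$-representation is again irreducible (irreducibility is detected by the trace of a commutator being $\neq 2$, a condition preserved by $\sigma$) and hence cannot have abelian image; your detour through $\sigma(F)=\mathbb{Q}$, all traces of $\rho_0$ real, and the classical Fuchsian/$PSU(2)$ dichotomy reaches the same contradiction by a longer but still valid path.
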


\subsection*{Outline} The paper is organized as follows. Theorem \ref{thm:sf} is proven in \S \ref{sec:sf}. Generalities on torus semi-bundles are dealt with in \S \ref{sec: semi-bundle} followed by an outline of an inductive proof of Theorem \ref{thm:semi-fibre}. The base case of the induction is dealt with in \S \ref{sec: base case} and the inductive step in \S \ref{sec: inductive step}. Theorem \ref{thm:alternating} is proven in \S \ref{sec:alt} while Propositions \ref{prop:alt} and  \ref{prop:f8} are dealt with in \S \ref{sec:fig 8}. Finally, in \S \ref{sec: square trivial} we prove Theorem \ref{thm:square trivial} and Corollaries \ref{cor:alt square-trivial} and  \ref{cor:trace-field}.

\subsection*{Acknowledgements} The authors thank Adam Clay, Josh Greene, Tye Lidman and Ciprian Manolescu for their comments on and interest in this work, Alan Reid for mentioning Corollary \ref{cor:trace-field}, Michael Polyak for showing them the presentation described in Section 3.1, and J\'ozef Przytycki for pointing out Wada's paper \cite{Wada1992}. They also thank Adam Levine, Robert Lipshitz, Peter Ozsv\'ath and Dylan Thurston for patiently answering questions about bordered Heegaard Floer homology, which proved to be a key tool for establishing Corollary \ref{crl:Sol}. 

\section{A characterization of Seifert fibred L-spaces} \label{sec:sf}

\subsection{Preliminaries on L-spaces} \label{preliminaries} 
We recall an important construction which gives rise to infinite families of L-spaces (see \cite[Section 2]{OSz2005-lens}). 

\begin{definition} \label{def: triad} 
Let $M$ be a compact, connected 3-manifold with torus boundary. Given a basis $\{\alpha,\beta\} \subset H_1(\partial M)$ the triple $(\alpha,\beta,\alpha+\beta)$ will be referred to as a triad whenever \[\left|H_1(M(\alpha+\beta);\bZ)\right|=\left|H_1(M(\alpha);\bZ)\right|+\left|H_1(M(\beta);\bZ)\right|.\]\end{definition}
Note that our boundary orientation differs from that of Ozsv\'ath and Szab\'o, resulting in a sign discrepancy in the definition of a triad. 

\begin{proposition}[Proposition 2.1 of \cite{OSz2005-lens}] \label{sum L-space} 
If $M$ admits a triad $(\alpha,\beta,\alpha+\beta)$ with the property that $M(\alpha)$ and $M(\beta)$ are L-spaces, then $M(\alpha+\beta)$ is an L-space as well. 
\end{proposition}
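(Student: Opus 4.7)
The strategy is the standard one: exploit the \os{} surgery exact triangle for the triad $(\alpha,\beta,\alpha+\beta)$ to bound $\rk\HFhat(M(\alpha+\beta))$ from above, and then bound it from below using the general fact that the hat Heegaard Floer homology of a rational homology sphere has rank at least $|H_1|$.

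First I would record that the triad hypothesis forces $M(\alpha+\beta)$ to have finite $H_1$, hence to be a rational homology sphere: the integers $|H_1(M(\alpha))|$ and $|H_1(M(\beta))|$ are finite (since those manifolds are L-spaces), so the defining equation
\[ |H_1(M(\alpha+\beta);\bZ)| \;=\; |H_1(M(\alpha);\bZ)| + |H_1(M(\beta);\bZ)| \]
is a finite positive integer. I would also note that, as slopes in $H_1(\partial M)$, the pair $\{\alpha,\beta\}$ being a basis means that $\alpha$, $\beta$ and $\alpha+\beta$ pairwise have geometric intersection number one on $\partial M$, which is exactly the combinatorial input needed to apply the surgery exact triangle.

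Next I would invoke the \os{} surgery exact triangle
\[ \cdots \longto \HFhat(M(\alpha)) \longto \HFhat(M(\beta)) \longto \HFhat(M(\alpha+\beta)) \longto \HFhat(M(\alpha)) \longto \cdots, \]
which holds precisely for triads of this form. Exactness implies the subadditivity of rank
\[ \rk\HFhat(M(\alpha+\beta)) \;\leq\; \rk\HFhat(M(\alpha)) + \rk\HFhat(M(\beta)). \]
Using that $M(\alpha)$ and $M(\beta)$ are L-spaces and the triad identity, the right-hand side equals
\[ |H_1(M(\alpha);\bZ)| + |H_1(M(\beta);\bZ)| \;=\; |H_1(M(\alpha+\beta);\bZ)|. \]

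Finally, I would combine this with the general lower bound
\[ \rk\HFhat(Y) \;\geq\; |H_1(Y;\bZ)| \]
valid for any rational homology sphere $Y$ (obtained from the $\spinc$-decomposition $\HFhat(Y)=\bigoplus_{\frks}\HFhat(Y,\frks)$ together with the nonvanishing of each summand). Applied to $Y=M(\alpha+\beta)$, this pinches $\rk\HFhat(M(\alpha+\beta))$ to equal $|H_1(M(\alpha+\beta);\bZ)|$, which is precisely the L-space condition. The main obstacle is the surgery exact triangle itself, but that is quoted from the cited paper of \ons{}; everything else is bookkeeping with ranks and orders of finite groups.
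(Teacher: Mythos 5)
The paper does not actually prove this proposition; it is imported verbatim from Ozsv\'ath--Szab\'o \cite{OSz2005-lens} (hence the bracketed ``Proposition 2.1 of [OSz2005-lens]'' in the statement) and used as a black box. Your argument is correct and is exactly the standard proof of that cited result: the triad condition plus the L-space hypotheses force $M(\alpha+\beta)$ to be a rational homology sphere; the surgery exact triangle gives $\rk\HFhat(M(\alpha+\beta)) \leq \rk\HFhat(M(\alpha)) + \rk\HFhat(M(\beta)) = |H_1(M(\alpha))| + |H_1(M(\beta))| = |H_1(M(\alpha+\beta))|$; and the $\spinc$-decomposition with $\chi(\HFhat(Y,\frks))=\pm1$ supplies the matching lower bound. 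The only cosmetic caveat is that the cyclic ordering of the three slopes in the exact triangle depends on orientation conventions (as the paper itself remarks), but the rank subadditivity you use is invariant under cyclic permutation of the triangle, so this does not affect the argument.
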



It follows by induction that each of the manifolds $M(n\alpha+\beta)$ is an L-space for $n\ge0$. More generally, we include a short proof of the following well-known fact:
\begin{proposition}[See Example 1.10 of \cite{OSz2006-lectures}] \label{all L-spaces}
Suppose that $M$ admits a triad $(\alpha,\beta,\alpha+\beta)$ with the property that $M(\alpha)$ and $M(\beta)$ are L-spaces. Then for all coprime pairs $p, q \geq 0$, $M(p \alpha + q\beta)$ is an L-space.
\end{proposition}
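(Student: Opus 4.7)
I would proceed by strong induction on $p+q$, with Proposition \ref{sum L-space} as the main engine.

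The three base cases $(p,q) \in \{(1,0),(0,1),(1,1)\}$ are immediate: the first two are hypotheses and the third is one application of Proposition \ref{sum L-space} to the given triad.

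Before the inductive step I would record an algebraic ingredient. Since $M(\alpha)$ and $M(\beta)$ are rational homology spheres, $M$ has $b_1=1$ and hence carries a rational longitude $\lambda_M$, namely the primitive class in $H_1(\partial M)$ mapping to torsion in $H_1(M)$. A standard half-lives-half-dies computation (or a direct Mayer--Vietoris) yields a positive integer $c_M$ such that
\[
|H_1(M(\gamma);\bZ)| \;=\; c_M \cdot \Delta(\gamma, \lambda_M)
\]
for every slope $\gamma \ne \lambda_M$, where $\Delta$ is the intersection number on $H_1(\partial M)$. Writing $\lambda_M = a\alpha + b\beta$, the triad equation $|H_1(M(\alpha+\beta))|=|H_1(M(\alpha))|+|H_1(M(\beta))|$ becomes $|b-a|=|a|+|b|$, forcing $ab\le 0$. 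Taking $a\ge 0 \ge b$ without loss of generality,
\[
|H_1(M(p\alpha+q\beta);\bZ)| \;=\; c_M(qa + p|b|) \quad \text{for all } p,q \ge 0,
\]
a function that is additive on the cone $p,q\ge 0$. Consequently any $\bZ$-basis $\{p_1\alpha+q_1\beta,\;p_2\alpha+q_2\beta\}$ of $H_1(\partial M)$ with $p_i,q_i\ge 0$ automatically forms a triad with its sum $(p_1+p_2)\alpha+(q_1+q_2)\beta$.

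For the inductive step, assume $p+q\ge 3$ and $\gcd(p,q)=1$ with $p,q\ge 1$. Standard Farey / Stern--Brocot combinatorics produce coprime pairs $(p_1,q_1),(p_2,q_2)$ with $p_i,q_i\ge 0$, $(p_1+p_2,q_1+q_2)=(p,q)$, $|p_1q_2-p_2q_1|=1$, and $p_i+q_i<p+q$. By the induction hypothesis both $M(p_i\alpha+q_i\beta)$ are L-spaces; by the additivity above they assemble with $M(p\alpha+q\beta)$ into a triad; and one more application of Proposition \ref{sum L-space} completes the step.

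The main obstacle is really just the formula $|H_1(M(\gamma))|=c_M\,\Delta(\gamma,\lambda_M)$: once it is in hand, everything else is combinatorial bookkeeping in the Farey graph. A formula-free variant would iteratively verify each new triad condition directly from the previous one via the homology exact sequence for Dehn filling, but this amounts to the same computation written locally.
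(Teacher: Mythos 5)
Your proof is correct, and it reaches the same intermediate slopes as the paper's does (the Farey parents of $p/q$ are exactly the continued-fraction truncations $[a_1,\dots,a_{r-1}]$ and $[a_1,\dots,a_r-1]$ used in the paper), but your organization of the induction is genuinely different and somewhat cleaner. The paper inducts on the length $r$ of a continued-fraction expansion of $p/q$, and because the second parent $[a_1,\dots,a_r-1]$ can have the same length $r$ this forces a secondary induction on $a_r$; your choice of $p+q$ as the induction parameter strictly decreases for both parents in the mediant decomposition, so one clean strong induction suffices. You also make explicit the algebraic input that the paper delegates to a citation of \cite[Theorem~4.7]{Watson2008}: since $M$ is a rational homology solid torus, $|H_1(M(\gamma))|=c_M\Delta(\gamma,\lambda_M)$, and the original triad condition forces $\lambda_M$ to lie outside the open cone spanned by $\alpha,\beta$, so the order function $(p,q)\mapsto|H_1(M(p\alpha+q\beta))|$ is linear (hence additive) on the closed cone $p,q\ge 0$. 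One small detail worth stating is that $\lambda_M$ cannot equal $\pm\alpha$ or $\pm\beta$ (else $M(\alpha)$ or $M(\beta)$ would have $b_1>0$ and fail to be an L-space), so every slope $p\alpha+q\beta$ in the cone with $(p,q)$ coprime is distinct from $\lambda_M$ and the formula $|H_1(M(\gamma))|=c_M\Delta(\gamma,\lambda_M)$ applies to every filling that arises. Both proofs ultimately terminate at the hypotheses $M(\alpha)$, $M(\beta)$ and the seed triad; yours is arguably the more transparent bookkeeping.
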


\begin{proof} Let $p, q$ be a coprime pair with $p, q \geq 0$. Without loss of generality we can suppose $p, q \geq 1$. Choose integers $a_1 \geq 0$ and $a_2, \ldots , a_r \geq 1$ such that 
$$\textstyle \frac{p}{q} = [a_1, a_2, \ldots , a_r] = a_1 + \cfrac{1}{a_2 + \cdots \cfrac{1}{a_r}}$$ 
Let $r(\frac{p}{q}) \geq 1$ be the minimal $r$ such that $\frac{p}{q}$ admits such an expansion. When $r(\frac{p}{q}) = 1$, the proposition follows from the remark after Proposition \ref{sum L-space}. 

Assume next that $r = r(\frac{p}{q}) \geq 2$ and that $M(p' \alpha + q' \beta)$ is an L-space for all coprime pairs $p', q' \geq 1$ such that $r(\frac{p'}{q'}) < r$. Write $\frac{p}{q} = [a_1, a_2, \ldots , a_r]$ as above and note that as $[a_1, a_2, \ldots , a_{r-1}, 1] = [a_1, a_2, \ldots , a_{r-1} + 1]$ we have $a_r \geq 2$. Set 
$$\textstyle\frac{p_1}{q_1} = [a_1, a_2, \ldots , a_{r-1}]$$ 
and 
$$\textstyle\frac{p_2}{q_2} = [a_1, a_2, \ldots , a_r - 1]$$ 
It follows from the basic properties of the convergents of a continued fraction that $\{p_1\alpha + q_1 \beta, p_2\alpha + q_2 \beta\}$ is a basis of $H_1(\partial M)$ and $\frac{p}{q} = \frac{p_1 + p_2}{q_1 + q_2}$. The latter shows that $$\left|H_1(M(p\alpha + q \beta);\bZ)\right|=\left|H_1(M(p_1\alpha + q_1 \beta);\bZ)\right|+\left|H_1(M(p_2\alpha + q_2 \beta);\bZ)\right|.$$
(See the proof of \cite[Theorem 4.7]{Watson2008}.) This establishes that $(p_1\alpha + q_1 \beta, p_2\alpha + q_2 \beta, p \alpha + q \beta)$ is a triad. Our induction hypothesis implies that $M(p_1\alpha + q_1 \beta)$ is an L-space. We also have that $M(p\alpha + q \beta)$ is an L-space as long as $M(p_2 \alpha + q_2 \beta)$ is one. The latter will be true if $r(\frac{p_2}{q_2})  < r$, but this may not be the case. On the other hand if $r(\frac{p_2}{q_2}) = r$, we must have $a_r - 1 \geq 2$. Thus a second induction on $a_r$ is sufficient to complete the proof. 
\end{proof}

For example, all sufficiently large surgeries on a Berge knot  (the conjecturally complete list of knots in $S^3$ admitting lens space surgeries \cite{Berge}) yield L-spaces.

\subsection{Seifert fibred L-spaces.}
Our notation for Seifert fibred spaces follows that of Boyer, Rolfsen and Wiest \cite{BRW2005} and is consistent with that of Scott \cite{Scott1983}. Let $Y$ be Seifert fibred with base orbifold $\sB$, and write $\sB=B(a_1,a_2,\ldots,a_n)$ for some surface $B$ with cone points of order $a_i > 1$. If $Y$ is a rational homology sphere then $B$ is either $S^2$ or $P^2$. 

Lisca and Stipsicz have shown \cite[Theorem 1.1]{LS2007-iii} that when $B = S^2$, $Y$ is an L-space if and only if $Y$ does not admit a horizontal foliation while Boyer, Rolfsen and Wiest proved that these $Y$ do not admit a horizontal foliation if and only if $\pi_1(Y)$ is not left-orderable \cite[Theorem 1.3(b)]{BRW2005}. Thus Theorem \ref{thm:sf} holds when $B = S^2$. (See also Peters \cite{Peters2009}.) To complete its proof, we must consider the case $B = P^2$. 

\subsection{The proof of Theorem \ref{thm:sf} when $B = P^2$.}
Let $Y$ be Seifert fibred with base orbifold $\sB = P^2(a_1, a_2, \ldots, a_n)$. By \cite{BRW2005}, $\pi_1(Y)$ is not left-orderable. Since $Y$ is orientable but $P^2$ is not, $Y$ is a rational homology sphere. Hence we are reduced to establishing the following proposition:

\begin{proposition} \label{prp:rp2} Suppose $Y$ is a Seifert fibred space with base orbifold $\sB = P^2(a_1, a_2, \ldots, a_n)$ where $a_i \geq 1$ if $n = 1$ and $a_i \geq 2$ otherwise. Then $Y$ is an L-space.\end{proposition}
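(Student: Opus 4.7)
The non-left-orderability of $\pi_1(Y)$ is immediate from \cite[Theorem 1.3(a)]{BRW2005}, since the underlying surface $P^2$ of the base orbifold is non-orientable; so the content of the proposition is that $Y$ is an L-space. My plan is an induction on $n$, based on the triad construction of Propositions~\ref{sum L-space} and \ref{all L-spaces}, with the orientable circle bundles over $P^2$ serving as the base step.

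The base step treats all closed orientable Seifert fibred manifolds over $P^2$ with no exceptional fibres (which includes the $n=1$, $a_1=1$ case allowed in the proposition). Up to the choice of Euler number these form a small, explicit family: each is a closed geometric $3$-manifold with spherical, Euclidean, or $\mathrm{Nil}$ geometry. The spherical ones are L-spaces by \cite[Proposition 2.3]{OSz2005-lens}, and the remaining ones admit explicit plumbing/surgery descriptions from which one checks the L-space condition by direct computation (for instance using Proposition~\ref{all L-spaces}).

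For the inductive step, let $V \subset Y$ be a regular neighbourhood of the exceptional fibre of order $a_n$ and set $M = Y \setminus \operatorname{int}(V)$, so that $M$ is Seifert fibred over a M\"obius band with $n-1$ cone points and $Y = M(\alpha)$ with $\alpha = a_n h + b_n \phi$, where $\phi$ is the regular-fibre class and $h$ is a section class on $\partial M$. I claim the two reference fillings $M(h)$ and $M(\phi)$ are L-spaces: $M(\phi)$ degenerates the fibration and should decompose as a connected sum of the form $Z\,\#\,L(a_1,b_1')\,\#\cdots\#\,L(a_{n-1},b_{n-1}')$, with $Z$ an orientable circle bundle over $P^2$, yielding an L-space by the base step and the connected-sum formula for $\HFhat$; and $M(h)$ closes the M\"obius band base up to $P^2$, giving a Seifert fibred space over $P^2(a_1,\dots,a_{n-1})$ that is an L-space by the inductive hypothesis. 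A direct $H_1$-computation then shows that $(h,\phi,h+\phi)$ is a triad, and Proposition~\ref{all L-spaces} yields that $M(ph+q\phi)$ is an L-space for every coprime pair $p,q\geq 0$. After an elementary change of basis fixing $\phi$ and possibly flipping signs, this applies to the slope $\alpha$ and gives $Y = M(\alpha)$ as an L-space.

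The main obstacle is the clean identification of the two reference fillings $M(h)$ and $M(\phi)$---in particular, confirming that $M(\phi)$ really decomposes as the claimed connected sum of lens spaces with a circle bundle over $P^2$, and that $M(h)$ has base orbifold precisely $P^2(a_1,\dots,a_{n-1})$ so that the induction hypothesis applies. Once these identifications and the triad verification are in hand, the remainder of the argument is a routine iteration of Propositions~\ref{sum L-space} and \ref{all L-spaces}.
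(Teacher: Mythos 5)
Your overall plan—induction on the number of exceptional fibres, with the triad machinery of Propositions~\ref{sum L-space} and \ref{all L-spaces} driving the step—is the same strategy the paper uses. But there is a genuine error in the inductive step that the rest of the argument cannot recover from.

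The fibre filling $M(\phi)$ is \emph{never} an L-space. The regular fibre class $\phi$ is the rational longitude of $M$: it is the unique slope on $\partial M$ whose filling has positive first Betti number (equivalently, $|H_1(M(\phi))|$ is infinite). Concretely, already in the case $n=1$ with $M$ the twisted $I$-bundle over the Klein bottle, one has $N(\phi_0)\cong S^1\times S^2$ (a quick check: $\pi_1(N)=\langle a,b\mid a^2b^2\rangle$ with $\phi_0=ab$, so killing $\phi_0$ gives $\mathbb Z$, not the fundamental group of any circle bundle over $P^2$). In general the fibre filling of a Seifert piece over the M\"obius band is a connected sum whose prime summand carrying the base topology is $S^1\times S^2$, not a circle bundle over $P^2$. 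So the claimed decomposition $M(\phi)\cong Z\,\#\,L(a_1,b_1')\,\#\cdots$ with $Z$ a circle bundle over $P^2$ is wrong, and $(h,\phi,h+\phi)$ is not a triad with two L-space endpoints; Proposition~\ref{sum L-space} does not apply.

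The paper's proof avoids the fibre slope entirely. After removing the exceptional fibre of order $a_{n+1}$ to get $N$ (Seifert over the M\"obius band with cone points $a_1,\dots,a_n$) with $\phi_0$ the rational longitude, it picks an arbitrary dual class $\mu$ (so $\mu\cdot\phi_0=1$) and forms the triad $(\mu,\ \mu+\phi_0,\ 2\mu+\phi_0)$. Both $N(\mu)$ and $N(\mu+\phi_0)$ have base orbifold $P^2(a_1,\dots,a_n,1)=P^2(a_1,\dots,a_n)$, so both are L-spaces by the inductive hypothesis---no appeal to the fibre filling is needed. Then Proposition~\ref{all L-spaces} yields L-spaces for every slope in the sector between $\mu$ and $\mu+\phi_0$, and because $\mu$ was an arbitrary dual class, varying $\mu$ sweeps these sectors over every slope on $\partial N$ except $\phi_0$ itself. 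This "vary the dual class" step is the second idea missing from your write-up; with a single triad you only cover one sector, which is why the vague "elementary change of basis" in your last paragraph needs to be the actual engine of the argument, not an afterthought. If you replace your $M(\phi)$ by $M(h+\phi)$ (and, more generally, $M(h+q\phi)$ for each integer $q$) and let the dual class vary, you recover the paper's proof.
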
 

\begin{proof}
First suppose that $\sB=P^2(a_1)$ where $a_1 \geq 1$. Then $Y$ is obtained by filling $N = K \tilde{\times} I$, the twisted $I$-bundle over the Klein bottle, along some slope $\alpha$. The Seifert structure on $Y$ restricts to a circle bundle structure on $N$ with base space the M\"obius band. If $\phi_0$ is the fibre of this bundle, then $N(\phi_0) \cong S^1 \times S^2$. Further, $\Delta(\alpha, \phi_0) = a_1$ by Heil \cite{Heil1974}.

There is another Seifert structure on $N$ with base orbifold $D^2(2,2)$ and fibre $\phi_1$ such that $\Delta(\phi_1, \phi_0) = 1$. Then $Y = N(\alpha)$ is either the L-space $N(\phi_1) \cong P^3 \# P^3$ or it admits a Seifert structure over $S^2(2,2,\Delta(\alpha,\phi_1))$ where $\Delta(\alpha,\phi_1) \geq 1$. (See \cite{Heil1974}.) Since $\alpha \ne \phi_0$,  $Y$ is elliptic in the latter case and therefore is also an L-space \cite[Proposition 2.3]{OSz2005-lens}. Thus the proposition holds when $Y$ has at most one exceptional fibre. 

Suppose inductively that any Seifert fibred manifold with base orbifold $P^2(a_1,\ldots,a_r)$ is an L-space whenever $1 \leq r \leq n$. Fix a Seifert fibred manifold over $P^2(a_1,\ldots,a_{n+1})$ and recall that by hypothesis, $a_i \geq 2$ for all $i$. 

Let $\phi_0$ be the exceptional fibre of $Y$ corresponding to the cone point of index $a_{n+1}$ and denote the exterior of $\phi_0$ in $Y$ by $N$. Then $N$ is a Seifert fibred manifold with base orbifold $B_0(a_1, a_2, \ldots , a_n)$ where $B_0$ is a M\"{o}bius band. The {\it rational longitude} of $N$ is the unique slope $\phi_0$ on $\partial N$ which represents a torsion element of $H_1(N; \mathbb Z)$. Since $N(\phi_0) \cong S^1 \times S^2$, $\lambda_N = \phi_0$ 

It is convenient to identify the (oriented) slopes on $\partial N$ with primitive elements of $H_1(\partial N; \mathbb Z)$. Choosing a dual class $\mu \in H_1(\partial N; \mathbb Z)$ for $\phi_0$ (i.e. a class such that $\mu\cdot\phi_0=1$), we obtain a basis for $H_1(\partial N; \mathbb Z)$. For any slope $\gamma \ne \pm \phi_0$, we can write $\gamma = \pm (p \mu + q \phi_0)$ where $p \geq 1$.  The Dehn filling $N(\gamma)$ is Seifert fibred with base orbifold $P^2(a_1,\ldots,a_n, \Delta(\phi_0, \gamma)) = P^2(a_1,\ldots,a_n, p)$. 
In particular, if $\alpha$ denotes the meridional slope of $\phi_0$, then $Y = N(\alpha)$ so that $\alpha =  \pm (a_{n+1}  \mu + q \phi_0)$. Note as well that for any $q \in \bZ$, our induction hypothesis implies that $N(\mu+q\phi_0)$ is an L-space.

By \cite[Lemma 2.1]{Watson2008}, there is a constant $D_N > 0$ depending only on $N$ such that for each slope $\gamma$ on $\partial N$, $|H_1(N(\gamma); \mathbb Z)| = D_N \Delta(\gamma, \phi_0)$. Then as $\mu \cdot \phi_0 = 1$, \begin{align*}
\left|H_1(N(2\mu+\phi_0);\bZ)\right| 
&= D_N\Delta(2\mu+\phi_0,\phi_0) \\
&= D_N\Delta(\mu,\phi_0)+D_N\Delta(\mu+\phi_0,\phi_0) \\
&= \left|H_1(N(\mu);\bZ)\right|+\left|H_1(N(\mu+\phi_0);\bZ)\right|.   
\end{align*} 
It follows that $(\mu,\mu+\phi_0,2\mu+\phi_0)$ is a triad of slopes on $\partial N$. Since $N(\mu)$ and $N(\mu+\phi_0)$ are L-spaces, Proposition \ref{all L-spaces} implies that $N((p+q)\mu+ q\phi_0)$ is an L-space for all coprime pairs $p, q \geq 0$. Now $\{\frac{p+q}{q} : p, q \geq 0 \hbox{ are coprime}\} = (\mathbb Q \cap [1, \infty)) \cup \{\frac10\}$ so that $N(\gamma)$ is an L-space for all slopes $\gamma$ in the sector of $H_1(\partial N; \mathbb R)$ bounded by the lines $\{ t \mu : t \in \mathbb R_+\}$ and $\{ t (\mu + \phi_0): t \in \mathbb R_+\}$. Since $\mu$ was chosen as an arbitrary dual class to $\phi_0$, given an integer $q$, $N(\gamma)$ is an L-space for all $\gamma \in \mathcal{S}_q$ where $\mathcal{S}_q$ is the set of slopes in the sector of $H_1(\partial N; \mathbb R)$ bounded by the lines $\{ t (\mu+q\phi_0) : t \in \mathbb R_+\}$ and $\{ t (\mu + (q+1)\phi_0): t \in \mathbb R_+\}$. Then as $\displaystyle{\bigcup_{q = -\infty}^\infty  \mathcal{S}_q}$ is the set of slopes on $\partial N$ other than $\phi_0$, the proposition has been proved. 
\end{proof}

\section{Torus semi-bundles} \label{sec: semi-bundle}  

Let $N$ be an oriented twisted $I$-bundle over the Klein bottle and give $T = \partial N$ the induced orientation. We remarked in the proof of Proposition \ref{prp:rp2} that there are two distinguished slopes $\phi_0, \phi_1$ on $T$ corresponding to the two Seifert structures supported by $N$. Here $\phi_0$ is the fibre slope of the structure with base orbifold a M\"{o}bius band while $\phi_1$ is the slope of the structure with base orbifold $D^2(2,2)$. It is well-known that $\phi_0$ and $\phi_1$ can be oriented so that $\phi_0 \cdot \phi_1 = 1$. Do this and observe that  $\{\phi_0, \phi_1\}$ is a basis for $H_1(T)$. We will identify the mapping class group of $T$ with $GL_2(\mathbb Z)$ using this basis - the mapping class of a homeomorphism $f$ corresponds to the matrix of $f_*: H_1(T; \mathbb Z) \to H_1(T; \mathbb Z)$ with respect to $\{\phi_0, \phi_1\}$. 

The first homology of $T$ maps to a subgroup of index two in $H_1(N; \mathbb Z)$. In fact, $H_1(N; \mathbb Z) \cong \mathbb Z \oplus \mathbb Z / 2$ where $\phi_0$ generates the second factor and $\phi_1$ represents twice a generator of the first. It follows that $\phi_0$ is the rational longitude of $N$ and that for any slope $\gamma$ on $T$ we have 
$$|H_1(N(\gamma); \mathbb Z)| = 4 \Delta(\gamma, \phi_0)$$
Further, it is well-known that a filling of $N$ with finite first homology is either $P^3 \# P^3$ or admits an elliptic geometry. Hence $N(\gamma)$ is an L-space if and only if $\gamma \ne \phi_0$. 

Let $f$ be a homeomorphism of $T = \partial N$ and suppose that $f_* = \left(\begin{smallmatrix} a  &   b \\ c   &  d \end{smallmatrix}\right)$ with respect to $\phi_0, \phi_1$. Then $W(f) = N \cup_f (-\hbox{det}(f_*)) N$ is an oriented torus semi-bundle and each such semi-bundle can be obtained this way. We claim that 
$$|H_1(W(f); \mathbb Z)| = 16 |c|$$ 
In fact, if $M_1, M_2$ are two rational homology solid tori and $W = M_1 \cup_f M_2$, it follows from the homology exact sequence of the pair $(W, M_1)$ that $|H_1(W; \mathbb Z)| = d_1d_2|T_1||T_2| \Delta(\lambda_1, \lambda_2)$ where $\lambda_j$ is the rational longitude of $M_j$, $d_j \geq 1$ is its order in $H_1(M_j; \mathbb Z)$, and $T_j$ is the torsion subgroup of $H_1(M_j; \mathbb Z)$. In our case, $d_1 = d_2 = |T_1| = |T_2| = 2$ and $\Delta(\lambda_1, \lambda_2) = \Delta(\phi_0, f_*(\phi_0)) = |c|$. Thus $|H_1(W(f); \mathbb Z)| = 16 |c|$ as claimed. It follows that a torus semi-bundle $W$ is a rational homology $3$-sphere if and only if $|c| \geq 1$.

\begin{remark} \label{non-zero} 
{\rm The fibre classes $\phi_0, \phi_1$ are preserved up to sign by each homeomorphism of $N$. In fact, under the identification of the mapping class group of $T$ with $GL_2(\mathbb Z)$ described above, the image of the mapping class group of $N$ in that of $T$ is the subgroup $\{\pm \left(\begin{smallmatrix} 1  & 0 \\ 0   &  1 \end{smallmatrix}\right), \pm \left(\begin{smallmatrix} 1  & 0 \\ 0   &  -1 \end{smallmatrix}\right)\}$. Hence given a torus semi-bundle $W(f)$, we can always assume $f_* =   \left(\begin{smallmatrix} a  &   b \\ c   &  d \end{smallmatrix}\right)$ where $c \geq 0$ and $\det(f_*)$ is an arbitrary element of $\{\pm 1\}$.}
\end{remark}

\begin{proof}[Proof of Theorem \ref{thm:semi-fibre}]
The implication (c) $\Rightarrow$ (a) of the theorem is immediate. The implication (a) $\Rightarrow$ (b) is a consequence of the proof of \cite[Proposition 9.1(1)]{BRW2005}. There it is shown that if a torus semi-bundle $W(f)$ has a left-orderable fundamental group, then $\phi_0$ must be identified with $\pm \phi_0$ by $f_*$. Equivalently, $f_* = \left(\begin{smallmatrix} a  &   b \\ 0   &  d \end{smallmatrix}\right)$ with respect to the basis $\phi_0, \phi_1$ of $H_1(T; \mathbb Z)$, and therefore $W(f)$ is not a rational homology $3$-sphere. 

To complete the proof of Theorem \ref{thm:semi-fibre}, we must show that the implication (b) $\Rightarrow$ (c) holds. To that end, let $W(f)$ be a torus semi-bundle whose fundamental group is not left-orderable and suppose that $f_* = \left(\begin{smallmatrix} a  &   b \\ c   &  d \end{smallmatrix}\right)$ with respect to $\phi_0, \phi_1$. Note that $c \ne 0$ as otherwise $W(f)$ would be irreducible and have a positive first Betti number, so its fundamental group would be left-orderable \cite[Theorem 1.1(1)]{BRW2005}. Thus $|c| \geq 1$. By Remark \ref{non-zero} we can suppose $c \geq 1$ and $\det(f_*) = -1$. We will proceed by induction on $c$. The initial case $c = 1$ is dealt with in the next section using a bordered Heegaard Floer homology argument. See Theorem \ref{thm:rk16}. The inductive step is handled in \S \ref{sec: inductive step} using a surgery argument based on the the triad condition of Proposition \ref{sum L-space}. See Proposition \ref{prop: inductive case}. 
\end{proof}

\section{The bordered invariants of the twisted $I$-bundle over the Klein bottle.}\label{sec: base case}

Heegaard Floer homology has been extended to manifolds with connected boundary by Lipshitz, Ozsv\'ath and Thurston \cite{LOT2009} (this approach subsumes knot Floer homology \cite{OSz2004-knot,Rasmussen-PhD} and was preceded, for the case of sutured manifolds, by work of Juh\'asz \cite{Juhasz2006}). In this context, the invariants take the form of certain modules (described below) over a unital differential (graded) algebra $\sA$. Denote by $\sI\subset\sA$ the subring of idempotents. Our focus is on the bordered invariants of the twisted $I$-bundle over the Klein bottle, and as such we restrict our attention to the case of manifolds with torus boundary. This simplifies some of the objects in question, and the relevant  setup in this case is summarized nicely in the work of Levine \cite{Levine2010}. As such we will adhere to the notation and conventions of  \cite[Section 2]{Levine2010} in the arguments and calculations  that follow. We work with $\bF=\bZ/2$ coefficients throughout. 

\begin{figure}[ht!]
\labellist 
	\pinlabel $A$ at 136 454
	\pinlabel {\reflectbox{\rotatebox{180}{$A$}}} at 136 331
	\pinlabel $B$ at 136 252
	\pinlabel {\reflectbox{\rotatebox{180}{$B$}}} at 136 129
	
\small
	\pinlabel $R_1$ at 27 220 
	\pinlabel $R_2$ at 27 240
	\pinlabel $R_3$ at 27 278
	\pinlabel $R_4$ at 175 278
	\pinlabel $R_5$ at 245 278
	\pinlabel $R_6$ at 245 188

	\pinlabel $a_1$ at -8 212
	\pinlabel $a_2$ at -8 228
	\pinlabel $a_3$ at -8 253
	\pinlabel $a_4$ at -8 355
	
	\pinlabel $\beta_1$ at 128 428
	\pinlabel $\beta_2$ at 128 156
	
	\pinlabel $\alpha$ at 293 278
	
	\pinlabel $\alpha_1^a$ at 235 218 
	\pinlabel $\alpha_2^a$ at 212 318 
	
	\pinlabel $x_1$ at 146 410
	\pinlabel $x_2$ at 146 381
	\pinlabel $x_3$ at 146 363
	
	\pinlabel $y_1$ at 146 233
	\pinlabel $y_2$ at 146 223
	\pinlabel $y_3$ at 146 212
	\pinlabel $y_4$ at 146 201
	\pinlabel $y_5$ at 146 183
	
	\pinlabel $\left\{\phantom{\begin{array}{c}a\end{array}}\right.$ at -15 218
	\pinlabel $\left\{\phantom{\begin{array}{c}a\\a\end{array}}\right.$ at -15 240
	\pinlabel $\left\{\phantom{\begin{array}{c}a\\a\\a\\a\\a\\a\\a\end{array}}\right.$ at -15 303
	
	\pinlabel $\rho_1$ at -34 218
	\pinlabel $\rho_2$ at -34 239
	\pinlabel $\rho_3$ at -34 303
	\endlabellist
\includegraphics[scale=0.9]{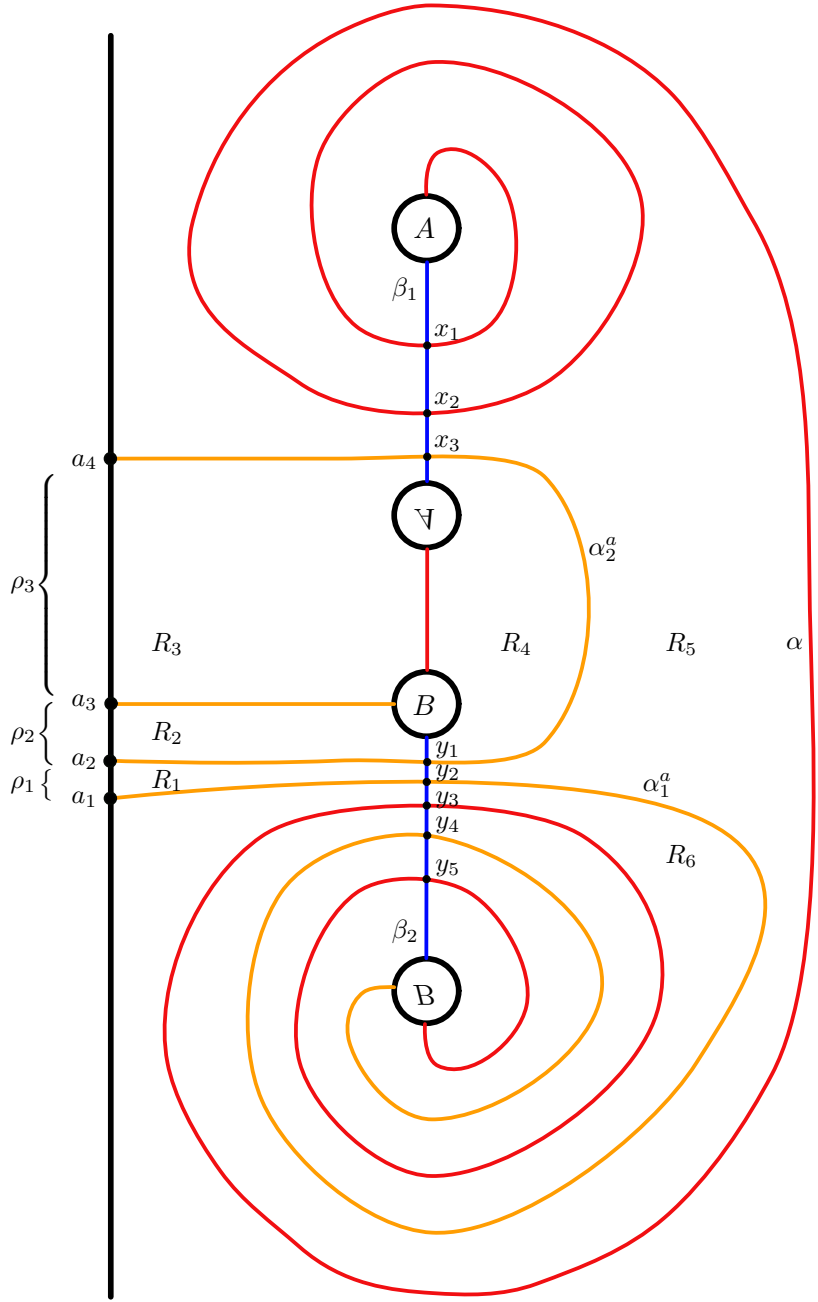}
\caption{A genus 2 bordered Heegaard diagram for the twisted $I$-bundle over the Klein bottle. The circles marked $A$ and $B$ are identified to give the handles, and the pair of twists in each handle encodes the Seifert structure with base orbifold $D^2(2,2)$.}\label{fig:twisted}
\end{figure}

\subsection{Determining the bordered invariants.} Recall that a (left) type D structure over $\sA$ is an $\bF$-vector space $V$ equipped with a left action of $\sI$ such that $V=\iota_0V\oplus\iota_1V$ and a map $\delta_1\co V\to\sA\otimes_\sI V$ (satisfying a compatibility condition, see \cite[Equation (3)]{Levine2010}, for example).

The twisted $I$-bundle over the Klein bottle $N$ is described by the bordered Heegaard diagram in Figure \ref{fig:twisted}. Recall that $N$ may be constructed by identifying a pair of solid tori along an essential annulus, giving rise to the Seifert structure with base orbifold $D^2(2,2)$. This identification of solid tori is reflected in the calculation of $\pi_1(N)=\langle a,b|a^2b^2\rangle$ via Seifert-van Kampen; alternatively, this presentation for $\pi_1(N)$ may be obtained using the single $\alpha$-curve in Figure \ref{fig:twisted} to obtain the relation. The reader may verify that the framing specified by this bordered diagram is consistent with $\{\phi_0,\phi_1\}$ by, for example, noting that the words $ab$ and $b^2$ (read from the arcs $\alpha_2^a$ and  $\alpha_1^a$, respectively) correspond to the peripheral elements $\phi_0$ and $\phi_1$, respectively.  

Let $\mathbf{D}=\widehat{\operatorname{CFD}}(N)$.  Our convention for the decomposition from the left-action of $\sI$ is to denote by $\circ$ those generators $x$ for which $\iota_0x=x$, and by $\bu$ those generators $y$ for which  $\iota_1y=y$.

\begin{proposition}\label{prp:typeD} The type D structure $\mathbf{D}$ is described by the directed graph
\[
\xymatrix@C=70pt@R=40pt{
{\bu}\ar@/^1pc/[d]|{\rho_{23}} & {\circ}\ar[r]|{\rho_1}\ar[d]|{\rho_3} & {\bu} \\
{\bu}\ar@/^1pc/[u]|{\rho_{23}} & {\bu}\ar[r]|{\rho_2} & {\circ}\ar[u]|{\rho_{123}}
}
\]
 \end{proposition}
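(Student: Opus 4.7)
The plan is to extract $\mathbf{D} = \widehat{\operatorname{CFD}}(N)$ directly from the bordered Heegaard diagram in Figure \ref{fig:twisted} by enumerating generators, determining their idempotents from the occupied $\alpha$-arc, and then cataloguing the domains that contribute to $\delta_1$, reading off the appropriate elements of $\sA$ from their behavior along $\partial$.

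First I would list the generators: a generator of $\widehat{\operatorname{CFD}}$ is an unordered pair $\{p,q\}$, with one point on each of $\beta_1,\beta_2$, such that exactly one of the two $\alpha$-arcs $\alpha_1^a,\alpha_2^a$ is occupied and the closed curve $\alpha$ is occupied. Reading off the intersection points labeled $x_1,x_2,x_3$ on $\beta_1$ and $y_1,\ldots,y_5$ on $\beta_2$, one obtains exactly the six generators indicated in the graph, three of each idempotent type; the idempotent assignment ($\circ$ vs.\ $\bu$) is determined by whether the generator covers $\alpha_1^a$ or $\alpha_2^a$, which can be read directly from the figure. I would match these six generators to the six vertices of the claimed graph.

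Next I would determine the coefficient maps. Each arrow in the graph is a contribution to $\delta_1$ coming from a domain in the Heegaard diagram whose $\alpha$-boundary is disjoint from $\bu z$ and whose $\beta$-boundary connects one generator to another; the algebra coefficient is recorded by how the boundary of the domain traverses the arcs $a_1,a_2,a_3,a_4$ between the Reeb chord endpoints. Thus I would go region-by-region through $R_1,\ldots,R_6$, identifying bigons or rectangles that connect pairs of generators and abut exactly one of the intervals labeled $\rho_1,\rho_2,\rho_3$ (giving coefficients $\rho_1,\rho_2,\rho_3$), as well as the longer domain that traverses $\rho_1$, then $\rho_2$, then $\rho_3$ (giving $\rho_{123}$) and the domain that crosses $\rho_2$ then $\rho_3$ (giving $\rho_{23}$). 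This is essentially a provincial computation near the two handles augmented by a careful check of the boundary interaction; because the diagram is nice (admissible with no bad periodic domains after a standard isotopy), each differential in the claimed graph should correspond to a unique holomorphic representative by the combinatorial criteria of Lipshitz--Ozsv\'ath--Thurston.

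Finally I would verify the type D compatibility relation $(\mu_1 \otimes \mathbf{1} + (\mu_2 \otimes \mathbf{1})\circ(\mathbf{1}\otimes \delta_1))\circ \delta_1 = 0$, which for the proposed graph reduces to a short set of checks: the only composable pairs of arrows along any directed path produce algebra words like $\rho_{23}\cdot\rho_{23}$, $\rho_1\cdot\rho_3$, $\rho_3\cdot\rho_2$, etc., each of which is either zero in $\sA$ (because consecutive idempotents fail to match or because the Reeb composition is incompatible) or cancels against a differential in $\sA$. The main obstacle I anticipate is not the bookkeeping of generators or the algebraic consistency check, but rather ensuring that no additional domains have been missed: in particular one must check that there are no further disks contributing coefficients $\rho_{12}$, $\rho_{3}$ from the other $\bu$ vertex, or higher Reeb words, and that the ``long'' domain yielding $\rho_{123}$ has a unique holomorphic representative. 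This can be done by appealing to the Euler-measure and boundary-multiplicity constraints from \cite{LOT2009} together with the index formula, which pin down the admissible domains completely given the region labels in Figure \ref{fig:twisted}.
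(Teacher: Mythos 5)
Your proposal has two genuine gaps, one of which is an outright error in the generator count.

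First, the bordered Heegaard diagram in Figure \ref{fig:twisted} produces \emph{eight} generators, not six: four in $\iota_0V$ (namely $(x_1,y_1),(x_2,y_1),(x_3,y_3),(x_3,y_5)$) and four in $\iota_1V$. The directed graph in the proposition has only six vertices because there is a provincial rectangle $R_4$ contributing $\delta_1(x_1,y_1)=(x_3,y_5)$, and this differential is cancelled via the edge reduction algorithm, removing two generators (and also altering several coefficient maps along the way). Your claim of ``exactly the six generators indicated in the graph, three of each idempotent type'' is wrong on both counts — the raw diagram has eight, and after cancellation the split is four bullets and two circles, not three and three. The cancellation step is not a technicality: it is where some domains in Table 1 get ``lost,'' so without it your domain-by-domain bookkeeping would not match the claimed graph.

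Second, your plan to find the coefficient maps by scanning for ``bigons or rectangles that connect pairs of generators'' is too weak for this diagram. Only $D_1$, $D_2$ and the provincial $R_4$ are bigons or rectangles. The domains $D_3$ and $D_4$ are annuli, and showing they contribute requires arguing that there is a permissible cut (at the obtuse corner at $x_1$) giving an odd number of holomorphic representatives. The domains $D_5$ and $D_6$ are worse still: they cover the region $R_5$, and the paper handles them by isotoping $\beta_1$ to create a new bigon $R_7$, decomposing $D_5$ and $D_6$ into simpler pieces $D_5^{\pm}, D_6^{\pm}$, and then performing another edge reduction to recover the coefficients $\rho_{123}$ and $\rho_{23}$ as compositions of zig-zags. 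You anticipate that the long $\rho_{123}$ domain is ``the main obstacle,'' which is correct instinct, but appealing vaguely to Euler measure, index constraints, and the diagram being ``nice'' does not resolve it — the diagram is not nice in the relevant technical sense, which is exactly why the explicit isotopy argument is needed. Without both the initial edge reduction (to get the right generator set) and the $R_5$ isotopy (to count representatives for $D_5,D_6$), the proof does not close.
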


The requisite map $\delta_1\co V\to\sA\otimes_\sI V$ for the type D structure is read from this directed graph as follows. The edge $\xymatrix@C=30pt{{\stackrel{x}{\circ}}\ar[r]|{\rho_1}&\stackrel{y}{\bu}}$, for example, indicates that there is a generator $x$ for which $\iota_0x=x$ and a generator $y$ for which $\iota_1y=y$ such that $\rho_1\otimes y$ appears as a summand in the expression for $\delta_1(x)$.  Since $\mathbf{D}$ decomposes into two summands, it will be convenient to write $\mathbf{D}=\mathbf{D_1}\oplus\mathbf{D_2}$ where the subscripts $\mathbf{1}$ an $\mathbf{2}$ denote the left and right connected components of the directed graph in Proposition \ref{prp:typeD}, respectively. 

\begin{proof}[Proof of Proposition \ref{prp:typeD}]
We determine $\tD$ directly from the definition, using the bordered Heegaard digram in Figure \ref{fig:twisted}. There are 8 generators for the underlying vector space $V$, partitioned according to  
\begin{align*}
\iota_0V &= \langle(x_1,y_1), (x_2,y_1), (x_3,y_3), (x_3,y_5) \rangle\\
\iota_1V &= \langle(x_1,y_2), (x_1,y_4), (x_2,y_2), (x_2,y_4)\rangle
\end{align*}
Adhering to the conventions for the case of a torus boundary outlined in \cite[Section 2]{Levine2010}, we begin by listing the possible domains in Table \ref{tab:domains}.  This follows from a case study, having observed that there are only 4 general types of domains $D$ depending on $n_1R_3+n_2R_4\subseteq D$ where the only possible multiplicities are  \[(n_1,n_2)\in\{(0,0),(0,1),(1,1),(1,2)\}\] (see \cite[Equation (4)]{Levine2010}, for example).

\begin{table}[ht!]
\begin{tabular}{|l|l|l|l|}
\hline
Domain & Source & Target & Notes and labels\\
\hline\hline
$R_1$ & $(x_1,y_1)$ & $(x_1,y_2)$ & Lost in cancelation. \\ 
& $(x_2,y_1)$ & $(x_2,y_2)$ & $D_1$\\
\hline\hline
$R_4$ & $(x_1,y_1)$ & $(x_3,y_5)$ & This is the only provincial domain.\\
\hline
$R_2+R_4$ & $(x_1,y_4)$ & $(x_3,y_3)$ &  $D_2$ \\
\hline\hline
$R_3+R_4$ & $(x_2,y_1)$ & $(x_1,y_4) $ & $D_3$ \\
\hline
 $R_2+R_3+R_4+R_6$ & $(x_2,y_4)$ & $(x_1,y_2)$ & $D_4$ \\
\hline
$R_2+R_3+R_4+R_5+R_6$ & $(x_3,y_3)$& $(x_2,y_1)$ & Inconsistent with idempotents. \\
\hline 
$R_1+R_2+R_3+R_4+R_5$ & $(x_3,y_5)$ & $(x_2,y_4)$ & Lost in cancelation. \\
\hline
$R_1+R_2+R_3+R_4+R_5+R_6$ & $(x_3,y_3)$ & $(x_2,y_2)$ & $D_5$ \\
\hline\hline
$R_2+R_3+2R_4$ & $(x_2,y_1)$& $(x_3,y_3)$& Inconsistent with idempotents. \\
\hline
$R_2+R_3+2R_4+R_5$ & $(x_1,y_2)$ & $(x_2,y_4)$&  $D_6$\\
\hline
$R_1+R_2+R_3+2R_4+R_5$ & $(x_1,y_1)$ & $(x_2,y_4)$& Lost in cancelation.\\
\hline
\end{tabular}\caption{A summary of possible domains for the bordered diagram in Figure \ref{fig:twisted}}
\label{tab:domains}\end{table}

The contributions of $D_1$ and $D_2$ to $\delta_1$ are immediate, as these regions are a bigon (containing the Reeb chord $-\rho_1$) and a rectangle (containing the Reeb chord $-\rho_2$), respectively. As a result we have operations
\[
\xymatrix@C=70pt@R=15pt{
 {\stackrel{(x_2,y_1)}{\circ}}\ar[r]|{\rho_1} & {\stackrel{(x_2,y_2)}{\bu}} \\
  {\stackrel{(x_1,y_4)}{\bu}}\ar[r]|{\rho_2} &  {\stackrel{(x_3,y_3)}{\circ}} 
}
\]
in $\tD$ (specifically, $\mathbf{D_2}$). Notice that potential contributions from the  domains $R_2+R_3+R_4+R_5+R_6$ and $R_2+R_3+2R_4$ (each of which would contribute a $\rho_{23}$) are ruled out since they are inconsistent with the idempotents: $(x_2,y_1),(x_3,y_3)\in\iota_0\mathbf{D_2}$. 

Further, the provincial domain $R_4$ is a rectangle and contributes $\delta_1(x_1,y_1)=(x_3,y_5)$. This operation may be eliminated via the edge reduction algorithm summarized in \cite[Section 2.6]{Levine2010}. Note that this eliminates 3 more domains from consideration (as in Table \ref{tab:domains}), since the (potential) contributions to $\delta_1$ from each of these is lost in the cancelation.

\begin{table}[ht!]
\begin{tabular}{|l|l|l|}
\hline
Domain & Contribution to $\delta_1$ & Sequence of Reeb chords \\
\hline\hline
$D_1$ & $\xymatrix@C=30pt{{(x_2,y_1)}\ar[r]|{\rho_1} & {(x_2,y_2)}} $ & $-\rho_1$ \\
\hline
$D_2$ & $\xymatrix@C=30pt{{(x_1,y_4)}\ar[r]|{\rho_2} & {(x_3,y_3)}}$& $-\rho_2$\\
\hline
$D_3$ & $\xymatrix@C=30pt{{(x_2,y_1)}\ar[r]|{\rho_3} & {(x_1,y_4)}} $ & $-\rho_3$ \\
\hline
 $D_4$ & $\xymatrix@C=30pt{{(x_2,y_4)}\ar[r]|{\rho_{23}} & {(x_1,y_2)}}$ & $(-\rho_2,-\rho_3)$ \\
\hline
$D_5$ & $\xymatrix@C=30pt{{(x_3,y_3)}\ar[r]|{\rho_{123}} & {(x_2,y_2)}}$ & $(-\rho_1,-\rho_2,-\rho_3)$ \\
\hline
$D_6$ & $\xymatrix@C=30pt{{(x_1,y_2)}\ar[r]|{\rho_{23}} & {(x_2,y_4)}}$&  $(-\rho_2,-\rho_3)$\\
\hline
\end{tabular}\caption{A summary of maps contributing to $\delta_1$ after canceling the differential corresponding to the provincial domain.}
\label{tab:maps}\end{table}

This leaves the summary of domains given in Table \ref{tab:maps}; to complete the proof we must justify the contribution from domain $D_i$ for $i=3,4,5,6$. From Table \ref{tab:maps}, we have
\[\mathbf{D_1} = \raisebox{28pt}{
\xymatrix@C=70pt@R=40pt{
{\bu}\ar@{..>}@/^1pc/[d]|{\rho_{23}}\\
{\bu}\ar @{..>}@/^1pc/[u]|{\rho_{23}} 
}}
\] and \[
\mathbf{D_2} = \raisebox{28pt}{
\xymatrix@C=70pt@R=40pt{
{\circ}\ar[r]|{\rho_1}\ar@{..>}[d]|{\rho_3} & {\bu} \\
{\bu}\ar[r]|{\rho_2} & {\circ}\ar @{..>}[u]|{\rho_{123}}
}}\]
where the dotted arrows denote the 4 contributions to $\delta_1$ that we have yet to justify. 

The domain $D_3$ is an annulus, containing the Reeb chord $-\rho_3$ in the boundary. Note that there is a single obtuse angle at $x_1$, and that cuts along either $\beta_1$ or $\alpha$ that start at $x_1$ decompose the annulus into a region with a single connected component (that is, such a cut meets the opposite boundary component). From this, following \cite[Pages 26--27]{Levine2010} for example, we may conclude that there  there are an odd number of holomorphic representatives for $D_3$, hence this permissible cut ensures that the domain supports a single holomorphic representative (counting modulo 2) of index 1,  establishing the contribution to $\delta_1$. 

The domain $D_4$ follows by similar considerations, having noted that there is a cut to the boundary starting at $y_4$ giving rise the the sequence $(-\rho_2,-\rho_3)$ in the boundary. The obtuse angle at $x_1$ ensures that this annular domain supports a single holomorphic representative (counting modulo 2) of index 1 as in the previous case. 

The domains $D_5$ and $D_6$ are more complicated, as each of these contains the region $R_5$ with multiplicity 1. However, we may treat each of these via considerations similar to those above, having first employed the following trick. 

The region $R_5$ may be simplified by altering the Heegaard diagram in Figure \ref{fig:twisted} by an isotopy: push the segment of $\beta_1$ between $x_2$ and $x_3$ to the right until a new bigon between $\alpha$ and $\beta_1$ is formed. Denote this new bigon by $R_7$ and its endpoints by $a$ and $b$ so that this new diagram produces $\delta_1(a,y_i)=(b,y_i)$ for $i=1,2,4$ in its corresponding type D structure. Note that the isotopy that removes $R_7$ realizes the homotopy achieved by canceling each of 
\[
\xymatrix@C=70pt@R=10pt{
 {\stackrel{(a,y_1)}{\circ}}\ar[r] & {\stackrel{(b,y_1)}{\circ}} \\
 {\stackrel{(a,y_2)}{\bu}}\ar[r] & {\stackrel{(b,y_2)}{\bu}} \\
  {\stackrel{(a,y_4)}{\bu}}\ar[r] & {\stackrel{(b,y_4)}{\bu}} 
 }
\]
via edge reduction. Denote by $R_5^+$ and $R_5^-$ the two new regions formed in the top and bottom of the diagram, respectively, having performed the isotopy producing $R_7$. These replace the region $R_5$; the remaining regions are unchanged.

We could work with this enlarged diagram directly, however it will be more convenient to simply identify the decompositions of $D_5$ and $D_6$ after the isotopy. In each case, the edge in question is replaced by a unique zig-zag.

The domain $D_5$ is decomposed into $D_5^+=R_2+R_3+R_4+R_5^++R_6$ and $D_5^-=R_1+R_5^-$. The latter is a rectangle containing the Reeb chord $-\rho_1$ in the boundary, while the former is a domain with the same structure as $D_4$ (containing the sequence $(-\rho_2,-\rho_3)$). This results in the zig-zag 
\[
\xymatrix@C=70pt@R=10pt{
 {\stackrel{(x_3,y_3)}{\circ}}\ar[r]|{\rho_1} & {\stackrel{(b,y_4)}{\bu}}& {\stackrel{(a,y_4)}{\bu}}\ar[r]|{\rho_{23}} \ar[l] & {\stackrel{(x_2,y_2)}{\bu}} 
 }
\]
which reduces to $\xymatrix@C=30pt{{\stackrel{(x_3,y_3)}{\circ}}\ar[r]|{\rho_{123}}&\stackrel{(x_2,y_2)}{\bu}}$ as claimed. 

The domain $D_6$ is decomposed into $D_6^+=R_3+R_4+R_5^+$ and $D_6^-=R_2+R_4+R_5^-$. Each of these is an annulus with the same structure as $D_3$, containing the Reeb chords $-\rho_3$ and $-\rho_2$, respectively. This results in the zig-zag  \[
\xymatrix@C=70pt@R=10pt{
 {\stackrel{(x_1,y_2)}{\bu}}\ar[r]|{\rho_2} & {\stackrel{(b,y_1)}{\circ}}& {\stackrel{(a,y_1)}{\circ}}\ar[r]|{\rho_{3}} \ar[l] & {\stackrel{(x_2,y_4)}{\bu}} 
 }
\]
which reduces to $\xymatrix@C=30pt{{\stackrel{(x_1,y_2)}{\bu}}\ar[r]|{\rho_{23}}&\stackrel{(x_2,y_4)}{\bu}}$ as claimed. 
\end{proof}

Recall that a (right) type A structure over $\sA$ is an $\bF$-vector space $V$ equipped with a right action of $\sI$ such that $V=V\iota_0\oplus V\iota_1$ and multiplication maps $m_{k+1}\co V\otimes_\sI\sA^{\otimes_\sI k}\to V$ satisfying the $\sA_\infty$ relations (see \cite[Equation (2)]{Levine2010}, for example). Let $\mathbf{A}=\widehat{\operatorname{CFA}}(N)$, where $\mathbf{A}=\mathbf{A_1}\oplus\mathbf{A_2}$ as above. For the present purposes, it suffices to formally define $\mathbf{A}=\widehat{\operatorname{CFAA}}(\bI,0)\boxtimes\tD$, where $\widehat{\operatorname{CFAA}}(\bI,0)$ is the identity type AA bimodule described in \cite[Section 10.1]{LOT2010} and summarized in  \cite[Section 2.4]{Levine2010}. Considering each summand of $\tD$ separately, we have $\mathbf{A_1}=\widehat{\operatorname{CFAA}}(\bI,0)\boxtimes\mathbf{D_1}$ and $\mathbf{A_2}=\widehat{\operatorname{CFAA}}(\bI,0)\boxtimes\mathbf{D_2}$. Note that this type A structure may be explicitly calculated from this information; by construction $\tA$ is bounded.

Our conventions and those of \cite[Section 10.1]{LOT2010} ensure that \[\CFhat(W(f)) \cong (\tA\boxtimes\tD,\partial^\boxtimes)\] where $W(f)=N\cup_f N$ and $f$ is given by $ \left(\begin{smallmatrix} 0  &   1 \\ 1   &  0 \end{smallmatrix}\right)$ (applying the main pairing result of Lipshitz, Ozsv\'ath and Thurston \cite[Theorem 1.3]{LOT2009}). One checks, for example, that $H_1(W(f);\bZ)\cong \bZ/4\oplus\bZ/4$ identifying the fact that $f(\phi_0)=\phi_1$ and $f(\phi_1)=\phi_0$ (the Heegaard diagram for the type A side is made explicit by appending the diagram of \cite[Figure 21]{LOT2010} to that of Figure \ref{fig:twisted}). As a vector space, $\tA\boxtimes\tD$ is generated by $ \tA\otimes_\sI\tD$. Recalling that $\delta_k=(\operatorname{id}_{\sA^{\otimes k-1}}\otimes\delta_1)\circ\delta_{k-1}$ with $\delta_0=\operatorname{id}_\tD$, the differential is defined by \[\partial^\boxtimes(x\otimes y)=\sum_{k=0}^\infty(m_{k+1}\otimes\operatorname{id}_\tD)(x\otimes\delta_k(y))\] (this is well-defined since $\tA$ is bounded). By a direct calculation one may show that $\rk \HFhat (W(f)) =16$. However, as $W(f)$ is a Seifert fibred $\bQ$-homology sphere, Theorem \ref{thm:sf} (together with the discussion in Section \ref{sec: semi-bundle}) ensures that this manifold is an L-space and the observation about the total rank of $\HFhat (W(f))$ follows immediately. 

\subsection{Changes of framing} Denote by $\tau_0$ and $\tau_1$ the Dehn twists along $\phi_0$ and $\phi_1$, respectively. In \cite[Section 10.2]{LOT2010}, the type DA bimodules corresponding to these mapping classes are described, and these may be composed via the box tensor product to change the framing on $\tD$. We note that our conventions are such that the Dehn twists $\tau_0$ and $\tau_1$ correspond to the Dehn twists $\tau_m$ and $\tau_l$, respectively, in \cite[Section 10.2]{LOT2010}. This can be seen, for example, by considering the effect on the peripheral elements in the new bordered Heegaard diagram obtained by adjoining each of the diagrams of \cite[Figure 25]{LOT2010} to the boundary of the diagram in Figure \ref{fig:twisted} (realizing the change of framing). 

Let $\tTz=\widehat{\operatorname{CFDA}}(\tau_0,0)$. As has been noted above, our conventions ensure that \[\CFhat(W(f)) \cong \tA\boxtimes\tD\] where $f$ is given by $\left(\begin{smallmatrix} 0  &   1 \\ 1   &  0 \end{smallmatrix}\right)$. As a result, $\tA\boxtimes\tTzn\boxtimes\tD$ gives a complex $\CFhat(W(f))$ for the manifold $W(f)=N\cup_f N$ where the homeomorphism $f$ is specified by $\left(\begin{smallmatrix} n  &   1 \\ 1   &  0 \end{smallmatrix}\right) = \left(\begin{smallmatrix} 1  &   n \\ 0   &  1 \end{smallmatrix}\right) \left(\begin{smallmatrix} 0  &   1 \\ 1   &  0 \end{smallmatrix}\right)$ for any integer $n$. We interpret negative values for $n$ via $\tT^{\boxtimes (-1)}_{\mathbf{0}}=\widehat{\operatorname{CFDA}}(\tau_0^{-1},0)$, so that for any integer $n$ we get a type D structure $\tTzn\boxtimes\tD$.

\begin{proposition}\label{prp:phi-zero} For all $n\in \bZ$ we have 
\[\tTzn\boxtimes\tD \cong \tD \] where $\cong$ denotes chain homotopy equivalence of type D structures.
\end{proposition}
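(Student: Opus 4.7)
The plan is to reduce to the case $n=1$ by induction on $|n|$, then to verify $\tTz\boxtimes\tD\cong\tD$ by an explicit algebraic calculation. Associativity of the box tensor product gives
\[
\tTzn\boxtimes\tD \;\cong\; \tTz\boxtimes\bigl(\tT^{\boxtimes (n-1)}_{\mathbf{0}}\boxtimes\tD\bigr),
\]
so the statement for $n\geq 1$ follows by induction from $n=1$, and the case $n\leq -1$ is the analogous computation performed with $\tT^{\boxtimes (-1)}_{\mathbf{0}}=\widehat{\operatorname{CFDA}}(\tau_0^{-1},0)$ in place of $\tTz$. Only the base case $n=1$ requires genuine work.

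For $n=1$, I would take the explicit type DA bimodule $\widehat{\operatorname{CFDA}}(\tau_0,0)=\widehat{\operatorname{CFDA}}(\tau_m,0)$ from \cite[Figure~25]{LOT2010} and compute the box tensor product $\tTz\boxtimes\tD$ directly from its definition. Since $\tD=\mathbf{D_1}\oplus\mathbf{D_2}$ and $\boxtimes$ distributes over direct sums, the two summands can be handled separately. The raw tensor product will carry several additional generators inherited from $\tTz$; most of these are connected by idempotent-weighted (null-homotopic) edges and can be removed using the edge reduction algorithm of \cite[\S 2.6]{Levine2010}, in the same manner as the cancellation of the provincial domain in the proof of Proposition \ref{prp:typeD}. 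After applying all possible reductions in a compatible order, what remains should coincide, summand by summand, with the graph of $\tD$ given in Proposition \ref{prp:typeD}.

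The main obstacle is the bookkeeping for this calculation. A naturality shortcut is \emph{not} available: by Remark \ref{non-zero}, the matrix of $\tau_0$ in $GL_2(\bZ)$ is not diagonal and therefore lies outside the image of the mapping class group of $N$, so $\tau_0$ does not extend to a self-homeomorphism of $N$ and one cannot invoke the functoriality of bordered Floer homology under a boundary-preserving homeomorphism. One might still hope for a conceptual argument---for example, $\phi_0$ is the rational longitude of $N$, so $\tau_0$ preserves all intersection numbers $\Delta(\gamma,\phi_0)$ and hence the filling homologies $|H_1(N(\gamma);\bZ)|=4\Delta(\gamma,\phi_0)$ for every slope $\gamma$---but such filling invariants do not in general determine a type D structure up to chain homotopy equivalence, so the direct computation appears to be the cleanest path.
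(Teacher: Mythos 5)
Your proposal is correct and follows essentially the same route as the paper: reduce to the $n=\pm 1$ case by iteration, split $\tD=\mathbf{D_1}\oplus\mathbf{D_2}$, compute $\tTz\boxtimes-$ on each summand from the directed--graph description of the type DA bimodule $\widehat{\operatorname{CFDA}}(\tau_m,0)$ (this is \cite[Figure~27]{LOT2010}; Figure~25 only gives the Heegaard diagrams realizing the twists), and finish with edge reduction. You anticipate heavier bookkeeping than the paper actually incurs, because two structural observations collapse most of it: since $\iota_0\mathbf{D_1}=0$, only the single generator of $\iota_1\tTz\iota_1$ and its $\rho_{23}\otimes\rho_{23}$ self-operation contribute, giving $\tTz\boxtimes\mathbf{D_1}=\mathbf{D_1}$ on the nose with no reduction needed; and since no $\rho_{12}$ or $\rho_{23}$ appears in $\delta_1$ of $\mathbf{D_2}$, only a three-generator subgraph of $\tTz$ is relevant, after which a single edge reduction recovers $\mathbf{D_2}$.
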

\begin{proof} A description of $\tTz$ is given in \cite[Figure 27]{LOT2010} via a directed graph; we will record only the subgraph relevant to the present calculations.

First consider the summand $\tD_\mathbf{1}$. Since $\iota_0\tD_\mathbf{1}=0$, we need only consider generators of $\tTz\iota_1$ (and relevant operations relating them). In particular, we have that $\tTz\iota_1$ is described by the directed graph
\[
\xymatrix@C=70pt@R=40pt{
{\stackrel{q}{\bu}}\ar@(ur,dr)|{\rho_{23}\otimes\rho_{23}}
}
\] 
where the label $\rho_{23}\otimes\rho_{23}$ indicates that $m(q,\rho_{23})=\rho_{23}\otimes q$ ($q$ is the single generator for which $\iota_1q\iota_1=q$). From this it is immediate that $\tTz\boxtimes\tD_\mathbf{1} = \tD_\mathbf{1}$ hence $\tTzn\boxtimes\tD_\mathbf{1} = \tD_\mathbf{1}$.

Next consider the summand $\tD_\mathbf{2}$. Since neither elements $\rho_{12}$ nor $\rho_{23}$ appear in $\tD_\mathbf{2}$, operations involving $\rho_{12}$ or $\rho_{23}$ on the right will not be used in the box tensor product when calculating $\delta_1$ for the type D structure $\tTz\boxtimes\tD_\mathbf{2}$. As a result, the relevant operations of  $\tTz$ are described by the directed graph
\[
\xymatrix@C=70pt@R=40pt{
{\stackrel{p}{\circ}}\ar[rr]|{\rho_1\otimes\rho_1 + \rho_{123}\otimes\rho_{123}}\ar@/^1pc/[dr]|{\rho_3\otimes(\rho_3,\rho_2)}
&& {\stackrel{q}{\bu}}\ar@/^1pc/[dl]|{\rho_{23}\otimes\rho_2} \\
&{\stackrel{r}{\star}}\ar@/^1pc/[ul]|{\rho_2\otimes1}\ar@/^1pc/[ur]|{1\otimes\rho_3} & 
}
\] 
where, for example, the edge labelled $\rho_3\otimes(\rho_3,\rho_2)$ indicates that $m(p,\rho_3,\rho_2)$ contains a summand $\rho_3\otimes r$. The action of $\sI$ on the generators is given by $\iota_0p\iota_0=p$ (denoted by $\circ$), $\iota_1q\iota_1=q$ (denoted by $\bu$), and $\iota_1 r \iota_0 =r$ (denoted by $\star$). The notation $\star$ is intended to indicate the change from $\circ$ to $\bu$ when calculating the box tensor product with a type D structure. Note that, for any element marked $\circ$ (an element in the $\iota_0$-summand) in a type D structure we have that $\tTz\boxtimes -$ produces an edge $\xymatrix@C=30pt{{\bu}\ar[r]|{\rho_2}& \circ}$ in the new type D structure. Now the box tensor product 
\[
\xymatrix@C=70pt@R=40pt{
{\circ}\ar[rr]|{\rho_1\otimes\rho_1 + \rho_{123}\otimes\rho_{123}}\ar@/^1pc/[dr]|{\rho_3\otimes(\rho_3,\rho_2)}
&&{\bu}\ar@/^1pc/[dl]|{\rho_{23}\otimes\rho_2}\ar@{} [dr] |{\boxtimes} & {\circ}\ar[r]|{\rho_1}\ar[d]|{\rho_3}\ar@{-->}[dr]|{(\rho_3,\rho_2)} & {\bu} \\\
&{\star}\ar@/^1pc/[ul]|{\rho_2\otimes1}\ar@/^1pc/[ur]|{1\otimes\rho_3} & & {\bu}\ar[r]|{\rho_2} & {\circ}\ar[u]|{\rho_{123}}
}
\] 
gives the type D structure of interest;  the dashed arrow labelled $(\rho_3,\rho_2)$ indicates the operation $\delta_2(x) =\rho_3\otimes\rho_2\otimes y$ for generators $x,y\in\iota_0\tD_\mathbf{2}$. As a result $\tTz\boxtimes\tD_\mathbf{2}$ is described by the directed graph
\[\xymatrix@C=70pt@R=40pt{
{\circ}\ar[ddr]|{\rho_3}\ar[r]|{\rho_1} & \bu \\
\bu\ar[u]|{\rho_2}\ar[d] & \circ\ar[u]|{\rho_{123}} \\
\bu\ar[r]|{\rho_{23}} & \bu\ar[u]|{\rho_2}
}\]
By edge reduction, this is chain homotopy equivalent to $\tD_\mathbf{2}$ as claimed. 

Combining these two calculations gives $\tTz\boxtimes\tD\cong\tD$, and this may be iterated to obtain $\tTzn\boxtimes\tD\cong\tD$ for $n>0$. A similar calculation yields the case $n<0$.\end{proof}

By contrast, setting  $\tTo=\widehat{\operatorname{CFDA}}(\tau_1,0)$ we have that $\CFhat(W(f))\cong \tA\boxtimes\tTon\boxtimes\tD$ for the torus semi-bundle $W(f)$ arising from identification via any homeomorphism $f$ of the form $\left(\begin{smallmatrix} 0  &   1 \\ 1   &  n \end{smallmatrix}\right)=\left(\begin{smallmatrix} 1  &   0 \\ n   &  1 \end{smallmatrix}\right)\left(\begin{smallmatrix} 0  &   1 \\ 1   &  0 \end{smallmatrix}\right)$. Again, we denote $\tT^{\boxtimes (-1)}_{\mathbf{1}}=\widehat{\operatorname{CFDA}}(\tau_1^{-1},0)$. In this setting $\tTon\boxtimes-$ is non-trivial (in the sense of Proposition \ref{prp:phi-zero}) when applied to $\tD$. The structure of $\tTon\boxtimes\tD$ is well-behaved, and may be easily computed (proceeding as in the argument above) for any $n\in \bZ$. However, this will not be needed (explicitly) in the present setting, so we leave it as an exercise for the interested reader. 

\subsection{An infinite family of L-spaces of rank 16: the base case} 
For the homeomorphism  $g$ described by 
$\left(\begin{smallmatrix} 1  &   a\\ 0   &  1 \end{smallmatrix}\right)\left(\begin{smallmatrix} 1  &   0 \\ b   &  1 \end{smallmatrix}\right)\left(\begin{smallmatrix} 0  &   1 \\ 1   &  0 \end{smallmatrix}\right)=\left(\begin{smallmatrix} a  &   ab+1 \\ 1   &  b \end{smallmatrix}\right)$ 
we have 
\[\CFhat(W(g))\cong \tA\boxtimes\tT^{\boxtimes b}_{\mathbf{1}}\boxtimes \tT^{\boxtimes a}_{\mathbf{0}}\boxtimes \tD\] with $W(g)=N\cup_g N$, for all $a,b\in\bZ$. This family of  torus semi-bundles is of immediate interest.

\begin{theorem}\label{thm:rk16} Let $N$ be the twisted $I$-bundle over the Klein bottle. Let $g$ be the homeomorphism $\partial N\to \partial N$ defined by the matrix $\left(\begin{smallmatrix} a  &   ab+1 \\ 1   &  b \end{smallmatrix}\right)$ for any $a,b\in\bZ$. Then the torus semi-bundle $W(g)=N\cup_g N$ is an L-space, with $\rk\HFhat(W(g))=16$.\end{theorem}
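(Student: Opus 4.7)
The plan is to exploit the decomposition
\[
\CFhat(W(g)) \;\cong\; \tA \boxtimes \tT^{\boxtimes b}_{\mathbf{1}} \boxtimes \tT^{\boxtimes a}_{\mathbf{0}} \boxtimes \tD
\]
together with Proposition \ref{prp:phi-zero} to reduce the computation to a one-parameter family, and then compute that family directly from the description of $\tD$ obtained in Proposition \ref{prp:typeD}.

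First, I would apply Proposition \ref{prp:phi-zero} to collapse the $\tT^{\boxtimes a}_{\mathbf{0}}$ factor: since $\tT^{\boxtimes a}_{\mathbf{0}} \boxtimes \tD \cong \tD$ as type D structures for every $a\in\bZ$, associativity of $\boxtimes$ (and the fact that box tensoring with a bounded type A structure preserves chain homotopy equivalence) gives
\[
\CFhat(W(g)) \;\simeq\; \tA \boxtimes \tT^{\boxtimes b}_{\mathbf{1}} \boxtimes \tD.
\]
Thus the parameter $a$ drops out of the chain homotopy type, and it suffices to treat the case $a=0$, i.e.\ the matrix $\left(\begin{smallmatrix} 0 & 1 \\ 1 & b \end{smallmatrix}\right)$, for every $b\in\bZ$.

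Next, I would determine $\tT^{\boxtimes b}_{\mathbf{1}} \boxtimes \tD$ explicitly. Because $\tD = \tD_\mathbf{1} \oplus \tD_\mathbf{2}$ splits, I would handle each summand separately, using the graph description of $\tTo$ from \cite[Section 10.2]{LOT2010} restricted to the Reeb chords that actually appear as outputs in $\tD$. As in the proof of Proposition \ref{prp:phi-zero}, tensoring with $\tTo$ enlarges each summand by a controlled ``zig--zag'' pattern, and the edge-reduction algorithm then contracts the result back to a graph of the same size as $\tD$ itself, with modified arrows. Iterating this step $|b|$ times (using $\tT^{\boxtimes(-1)}_{\mathbf{1}} = \widehat{\operatorname{CFDA}}(\tau_1^{-1},0)$ for negative $b$) produces an explicit $8$-generator type D structure $\tD^{(b)} := \tT^{\boxtimes b}_{\mathbf{1}} \boxtimes \tD$ whose combinatorics can be written down for every $b$.

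Finally, I would compute $\tA \boxtimes \tD^{(b)}$. Since $\tA = \widehat{\operatorname{CFAA}}(\bI,0) \boxtimes \tD$, this is a box product that can be done directly from the two graphs, and I would count generators after all cancellations. The key sanity check is that $|H_1(W(g);\bZ)| = 16|c| = 16$, which already gives the lower bound $\rk\HFhat(W(g)) \ge 16$; so it suffices to verify the matching upper bound. In fact the base case $b=0$ (which is the homeomorphism $\left(\begin{smallmatrix} 0 & 1 \\ 1 & 0 \end{smallmatrix}\right)$) has already been observed to yield rank $16$ via Theorem \ref{thm:sf}, and the inductive step should show that the contribution of each additional $\tTo$ factor adds and cancels generators in equal numbers, preserving the total rank.

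The main obstacle is the bookkeeping in the second step: the $\tTo$ bimodule does \emph{not} act as the identity on $\tD$, so I have to track how the graph of $\tD^{(b)}$ evolves with $b$ and verify that the structure stabilizes (up to edge reduction) to an $8$-generator type D structure for every $b$, rather than growing in rank. Once that stability is established, both the rank equality $\rk\HFhat(W(g))=16$ and the L-space conclusion follow at once from the lower bound supplied by $|H_1(W(g);\bZ)|$.
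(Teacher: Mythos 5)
Your first step matches the paper exactly: apply Proposition \ref{prp:phi-zero} to collapse the $\tT^{\boxtimes a}_{\mathbf{0}}$ factor and reduce to the homeomorphism $\left(\begin{smallmatrix} 0 & 1 \\ 1 & b \end{smallmatrix}\right)$. But your second and third steps — explicitly computing $\tT^{\boxtimes b}_{\mathbf{1}}\boxtimes\tD$ and then $\tA\boxtimes\tD^{(b)}$ and inducting on $b$ — are not carried out, and you yourself flag the ``main obstacle'' of showing the graph stabilizes. This is a genuine gap: as written, the rank bound for $b\ne 0$ is asserted, not proved.

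The missing observation that makes the rest of the computation unnecessary is that for \emph{every} $b\in\bZ$ (not just $b=0$), the matrix $\left(\begin{smallmatrix} 0 & 1 \\ 1 & b \end{smallmatrix}\right)$ sends $\phi_0 \mapsto \phi_1$, so it identifies $\phi_0$ with $\phi_1$, and therefore $W(g)$ is a Seifert fibred space by the classification recalled in \S\ref{sec: semi-bundle}. It is a $\bQ$-homology sphere with $|H_1(W(g);\bZ)| = 16|c| = 16$, and by \cite{BRW2005} its fundamental group is not left-orderable; hence Theorem \ref{thm:sf} (already established) shows it is an L-space of rank $16$. Combined with your first step (which correctly shows $\CFhat(W(g))$ is chain homotopy equivalent to $\CFhat$ of that Seifert fibred manifold for any $a$), this finishes the proof with no further box-tensor calculations. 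In short, the role of the bordered Floer computation in the paper's argument is only Proposition \ref{prp:phi-zero}; the rank identification is then delegated entirely to the Seifert fibred case of the theorem.
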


Note that when either $a=0$ or $b=0$, the resulting manifold is a Seifert fibered space. In this special case Theorem \ref{thm:rk16} follows from work of Boyer, Rolfsen and Wiest \cite{BRW2005} (see the discussion in Section \ref{sec: semi-bundle}) and Theorem \ref{thm:sf}. Generically however, $W(g)=N\cup_g N$ is a Sol manifold, and it is this case that is of present interest. 

\begin{proof}[Proof of Theorem \ref{thm:rk16}]
It suffices to prove that $\rk H_*(\tA\boxtimes\tT^{\boxtimes b}_{\mathbf{1}}\boxtimes \tT^{\boxtimes a}_{\mathbf{0}}\boxtimes \tD) = 16$. By Proposition \ref{prp:phi-zero}, 
\[\tT^{\boxtimes b}_{\mathbf{1}}\boxtimes \tT^{\boxtimes a}_{\mathbf{0}}\boxtimes \tD\cong\tT^{\boxtimes b}_{\mathbf{1}}\boxtimes \tD\] as type D structures, so that \[
H_*(\tA\boxtimes\tT^{\boxtimes b}_{\mathbf{1}}\boxtimes \tT^{\boxtimes a}_{\mathbf{0}}\boxtimes \tD)\cong H_*(\tA\boxtimes\tT^{\boxtimes b}_{\mathbf{1}}\boxtimes \tD)\]
Recall that $H_*(\tA\boxtimes\tT^{\boxtimes b}_{\mathbf{1}}\boxtimes \tD)\cong \HFhat(W(g))$ where $g$ is the homeomorphism defined by $\left(\begin{smallmatrix} 0  &   1 \\ 1   &  b \end{smallmatrix}\right)$. As previously observed, this is a Seifert fibred L-space with $|H_1(W(g);\bZ)|=16$, hence $\rk\HFhat(W(g))=16$ as claimed. 
\end{proof}

The key feature of this argument is the (more general) observation that post-composing any homeomorphim by $\tau_0^n$ gives $\left(\begin{smallmatrix} 1  &   n \\ 0   &  1 \end{smallmatrix}\right) \left(\begin{smallmatrix} a  &   b \\ c   &  d \end{smallmatrix}\right)=\left(\begin{smallmatrix} a+nc  &   b+nd \\ c   &  d \end{smallmatrix}\right)$; Proposition \ref{prp:phi-zero} implies that the (ungraded) Heegaard Floer homology is identical for the family of torus semi-bundles obtained via these homeomorphisms. The proof of Theorem \ref{thm:rk16} makes use of the fact that for $c=1$, choosing $n=-a$ yields a Seifert fibred torus semi-bundle.  


\section{The inductive step in the proof of Theorem \ref{thm:semi-fibre}} \label{sec: inductive step}
 
In this section we complete the proof of Theorem \ref{thm:semi-fibre} with the following proposition (cf. Theorem \ref{thm:rk16}).

\begin{proposition} \label{prop: inductive case}
Suppose that $W(f)$ is an L-space whenever $f_* = \left(\begin{smallmatrix} a  &   b \\ c   &  d \end{smallmatrix}\right)$ where $c = 1$ and $\det(f_*) = -1$. Then $W(f)$ is an L-space whenever $c \ne 0$.
\end{proposition}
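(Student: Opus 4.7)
The plan is to induct on $c \geq 1$, with the base case $c = 1$ supplied by the hypothesis. Using Remark \ref{non-zero} and the constraint $\det f_* = -1$ (which forces $\gcd(a,c)=1$), the inductive step reduces to the situation $a \geq 1$, and moreover $a \neq c$ whenever $c \geq 2$.

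For the inductive step with $c \geq 2$, the strategy is to choose an essential simple closed curve $\eta \subset T$ such that a single Dehn twist $\tau_\eta$ reduces the $c$-entry of the gluing matrix to a strictly smaller positive integer. Using the action $\gamma \mapsto \gamma + (\gamma\cdot\eta)\eta$ of $\tau_\eta$ on $H_1(T)$, one checks that $\eta = \phi_1$ works when $a < c$ (since $\tau_{\phi_1}^{-1}$ changes $c$ to $c-a$), while $\eta = (k+1)\phi_0 + \phi_1$ with $k = \lfloor a/c\rfloor$ works when $a > c$ (since $\tau_\eta$ changes $c$ to $a-kc \in (0,c)$; coprimality rules out $a = (k+1)c$).

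Next consider $M_\eta = W(f)\setminus \nu(\eta)$ with its natural basis $(\mu',\lambda')$ on $\partial\nu(\eta)$, where $\mu'$ is the meridian and $\lambda'$ is the surface framing inherited from $T$. Then $M_\eta(\mu') = W(f)$, and the fillings $M_\eta(\mu' + n\lambda') = W(\tau_\eta^n f)$ are again torus semi-bundles, obtained by inserting $n$ Dehn twists along $\eta$ into the gluing. Computing $|H_1|$ of these torus semi-bundles via $|H_1(W(g))| = 16|c(g)|$ for two values of $n$ and using the general identity $|H_1(M_\eta(\gamma))| = D_{M_\eta}\,\Delta(\gamma,\lambda_{M_\eta})$ determines the rational longitude $\lambda_{M_\eta}$ explicitly, after which one verifies that
\[
\bigl(\mu' + \epsilon\lambda',\; -\epsilon\lambda',\; \mu'\bigr)
\]
is a triad of slopes, with $\epsilon = -1$ in the first case and $\epsilon = +1$ in the second. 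By construction, $M_\eta(\mu' + \epsilon\lambda') = W(\tau_\eta^\epsilon f)$ is a torus semi-bundle whose $c$-entry is strictly smaller than $c$, and hence an L-space by the induction hypothesis.

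The main obstacle is identifying the remaining filling $M_\eta(\lambda')$ and showing it is an L-space. The key observation is that the annulus $A = T\setminus\nu(\eta) \subset M_\eta$ has both boundary circles isotopic on $\partial M_\eta$ to $\lambda'$, so filling along $\lambda'$ caps these two circles with parallel meridian disks and produces a $2$-sphere in $M_\eta(\lambda')$ that separates the two copies of $N$. Capping each side with a $3$-ball exhibits
\[
M_\eta(\lambda') \;\cong\; N(\eta)\,\#\,N\bigl(f^{-1}(\eta)\bigr),
\]
where the appearance of $f^{-1}(\eta)$ on the second side records that $\eta$, viewed on $\partial N_2$, has class $f^{-1}(\eta)$ in the $T_2$-basis. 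A short check using $a \neq 0$ in the first case and $a \neq (k+1)c$ in the second shows that neither filling slope is the rational longitude $\phi_0$ of $N$, so by Heil's classification \cite{Heil1974} each summand is either $P^3 \# P^3$ or an elliptic Seifert fibered space, and in particular an L-space by \cite[Proposition 2.3]{OSz2005-lens}. A connected sum of L-spaces is an L-space, so Proposition \ref{sum L-space} applied to the triad yields that $W(f)$ is an L-space, completing the induction.
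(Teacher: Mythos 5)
Your proof is correct and reaches the same conclusion by a recognizably different organization of the Euclidean induction. Both you and the paper exploit the same geometric mechanism: a simple closed curve on the separating torus $T$ gives a knot exterior whose three fillings $M(\mu')$, $M(\mu'+\epsilon\lambda')$, $M(\lambda')$ realize $W(f)$, another torus semi-bundle, and a connected sum $N(\cdot)\,\#\,N(\cdot)$, and the $|H_1|$ computation $|H_1(W(g))|=16|c(g)|$, $|H_1(N(\gamma))|=4\Delta(\gamma,\phi_0)$ gives the triad identity. The difference is in how the Euclidean step is packaged. The paper fixes $\gamma=\phi_0+\phi_1$, cooks up a specific auxiliary matrix $f_*$ with first column $(a,c)$ solving $aC-cA=1$, $0<c<C$, and then needs Lemma \ref{all columns-rows} to transfer the triad conclusion (which a priori concerns one particular $g_n$) to every matrix with the target first column $(A,C)$. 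You instead stay with the given $f$ and adaptively choose $\eta$ — $\phi_1$ when $a<c$, and $(k+1)\phi_0+\phi_1$ with $k=\lfloor a/c\rfloor$ when $a>c$ — so that a single Dehn twist $\tau_\eta^{\pm 1}$ strictly decreases the $c$-entry; the inductive hypothesis (all semi-bundles with smaller $|c|$) then applies directly to $W(\tau_\eta^\epsilon f)$, and Lemma \ref{all columns-rows} is never needed. I verified your two triad identities explicitly: in the first case $c=(c-a)+a$ with $\Delta(f^{-1}(\phi_1),\phi_0)=a$, and in the second $c=(a-kc)+((k+1)c-a)$ with $\Delta(f^{-1}(\eta),\phi_0)=(k+1)c-a$; both hold, both connect-summands avoid the $\phi_0$-filling, and the induction closes. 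Two cosmetic remarks: after using Remark \ref{non-zero} to flip signs so that $a,c>0$, the determinant may become $+1$ rather than $-1$, but $\gcd(a,c)=1$ still follows from $\det=\pm1$, which is all you actually use; and your sentence "$\eta$, viewed on $\partial N_2$, has class $f^{-1}(\eta)$ in the $T_2$-basis" is slightly garbled — it should say that $\eta$, expressed in the $\partial N_2$-basis, has class $f_*^{-1}(\eta)$ in the $\partial N_1$-basis — but the formula $M_\eta(\lambda')\cong N(\eta)\,\#\,N(f^{-1}(\eta))$ you deduce from it is right.
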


The proof is at the end of this section. To set it up, let $f$ be a homeomorphism of $T$ with matrix $f_* = \left(\begin{smallmatrix} a  &   b \\ c   &  d \end{smallmatrix}\right)$ with respect to $\phi_0, \phi_1$. Our first goal is to understand conditions under which Dehn surgery on $W(f)$ along a knot contained in $T = \partial N$ yields an L-space. 

Let $\gamma$ be a slope on $T$ represented by a simple closed curve $K_\gamma \subset T \subset W(f)$. As $T$ is oriented, we have a homeomorphism $D_{K_\gamma}: T \to T$, well-defined up to isotopy, given by a Dehn twist along $K_\gamma$. On the level of homology 
$$(D_{K_\gamma})_*(\beta) = \beta + (\beta \cdot \gamma) \gamma \hbox{ for all } \beta \in H_1(T)$$

Denote the exterior of $K_\gamma$ in $W(f)$ by $M_{f, \gamma}$ and set $T_\gamma = \partial M_{f, \gamma}$. There is a basis $\{\mu_\gamma, \lambda_T\}$ of $H_1(T_\gamma)$ where $\mu_\gamma$ is a meridian of $K_\gamma$ and $\lambda_T$ is represented by a parallel of $K_\gamma$ lying on $T$. Orient $\lambda_T$ and $\mu_\gamma$ so that $\mu_\gamma \cdot \lambda_T = 1$ with respect to the induced orientation on $T_\gamma$. Our first goal is to  determine the constant $\epsilon(f, \gamma)$ such that $(\mu_\gamma, \epsilon(f, \gamma) \lambda_T, \mu_\gamma + \epsilon(f, \gamma) \lambda_T)$ is a triad (cf. Definition \ref{def: triad}). 

Note that 
$$M_{f, \gamma}(\mu_\gamma) = W(f)$$ 
while 
$$M_{f, \gamma}(\lambda_T) = N(\gamma) \# N(f_*(\gamma))$$ 
The latter is an $L$-space as long as neither $\gamma$ nor $f_*(\gamma)$ is $\pm \phi_0$. More precisely, let 
$$\gamma = r \phi_0 + s \phi_1 \;\;\; \hbox{ and } \;\;\; f_*(\gamma) = u \phi_0 + v \phi_1$$ 
Then 
$$u = ar+ bs \;\;\; \hbox{ and } \;\;\; v = cr + ds$$
The only filling of $N$ which is not an $L$-space is the $\phi_0$-filling. Further $|H_1(N(\gamma))| = 4 \Delta(\gamma, \phi_0) = 4|s| \;\;\; \hbox{ and } \;\;\; |H_1(N(f_*(\gamma)))| = 4|v|$. Thus $M_{f, \gamma}(\lambda_T)$ is an $L$-space if and only if $s, v \ne 0$. In this case, 
$$|H_1(M_{f, \gamma}(\lambda_T))| = 16|sv|$$
For $c, s, v \ne 0$ set 
$$\epsilon(f, \gamma) = -\hbox{sign}(csv)$$
and note that 
$|H_1(M_{f, \gamma}(\mu_\gamma + \epsilon(f, \gamma) \lambda_T) )| = |H_1(M_{f, \gamma}(\mu_\gamma))| + |H_1(M_{f, \gamma}(\lambda_T) )|$. Consequently $(\mu_\gamma, \epsilon(f, \gamma) \lambda_T, \mu_\gamma + \epsilon(f, \gamma) \lambda_T)$ 
is a triad.

It is well-known that $M_{f, \gamma}(\mu_\gamma + n \lambda_T) = W(g_n)$ where $g_n = f \circ D_{K_\gamma}^{-n}$. In particular, 
$(g_n)_*(\beta) = f_*(\beta) - n (\beta \cdot \gamma) f_*(\gamma)$. Hence
$(g_n)_* = \begin{pmatrix} a - nsu &   b + nru \\ c - nsv  &  d +nrv \end{pmatrix}$ and 
therefore 

\begin{lemma} \label{L-space} 
Suppose that $W(f)$ is an L-space and $\gamma, f_*(\gamma) \ne \pm \phi_0$. Then $W(g_n)$ is an $L$-space for all $n \geq 0$ where $g_n = f \circ D_{K_\gamma}^{-\epsilon(f, \gamma) n}$. Further, if $f_* = \left(\begin{smallmatrix} a  &   b \\ c   &  d \end{smallmatrix}\right)$ then \\ 
$$(g_n)_* = \begin{pmatrix} a - n\epsilon(f, \gamma)su &   b + n\epsilon(f, \gamma)ru \\ c - n\epsilon(f, \gamma)sv  &  d + n\epsilon(f, \gamma)rv \end{pmatrix}$$  
\qed 
\end{lemma}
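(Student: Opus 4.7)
The plan is to apply Proposition \ref{all L-spaces} to the triad $(\mu_\gamma,\,\epsilon(f,\gamma)\lambda_T,\,\mu_\gamma + \epsilon(f,\gamma)\lambda_T)$ constructed in the discussion preceding the lemma, and then to translate the resulting family of fillings of $M_{f,\gamma}$ back into the torus semi-bundles $W(g_n)$.

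The first step is to verify that the two extremal fillings of the triad are L-spaces. By construction $M_{f,\gamma}(\mu_\gamma) = W(f)$, which is an L-space by hypothesis. The other extremal filling is $M_{f,\gamma}(\epsilon(f,\gamma)\lambda_T) \cong N(\gamma)\,\#\,N(f_*(\gamma))$, where the sign $\epsilon(f,\gamma) = \pm 1$ only affects an overall orientation and is irrelevant for the L-space property. The hypothesis $\gamma,f_*(\gamma)\ne\pm\phi_0$ together with the description of fillings of $N$ from Section \ref{sec: semi-bundle} (every slope other than $\phi_0$ produces either $P^3\# P^3$ or an elliptic manifold) places each summand among the L-spaces, and since a connected sum of L-spaces is an L-space the whole filling is an L-space.

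With both hypotheses of Proposition \ref{all L-spaces} verified (taking $\alpha = \mu_\gamma$ and $\beta = \epsilon(f,\gamma)\lambda_T$), one concludes that $M_{f,\gamma}(p\mu_\gamma + q\,\epsilon(f,\gamma)\lambda_T)$ is an L-space for every coprime pair $p,q\ge 0$. Specializing to $p = 1$ and $q = n$ (always coprime) gives the first assertion of the lemma, provided one identifies this filling with $W(g_n)$; that identification is exactly the ``twist along the gluing torus'' principle recalled just above the lemma, namely that $(\mu_\gamma + m\lambda_T)$-surgery on $K_\gamma\subset T\subset W(f)$ produces $W(f\circ D_{K_\gamma}^{-m})$, applied here with $m = \epsilon(f,\gamma)n$.

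For the matrix formula I would use the linear action of the Dehn twist, $(D_{K_\gamma}^{-\epsilon n})_*(\beta)=\beta - \epsilon(f,\gamma) n(\beta\cdot\gamma)\gamma$, which gives
\[(g_n)_*(\beta) = f_*(\beta) - \epsilon(f,\gamma)\, n\, (\beta\cdot\gamma)\, f_*(\gamma).\]
Evaluating on $\phi_0$ (with $\phi_0\cdot\gamma = s$) and on $\phi_1$ (with $\phi_1\cdot\gamma = -r$), and using $f_*(\gamma)=u\phi_0+v\phi_1$, reads off the two columns of the displayed matrix directly. I do not expect any genuine obstacle: the only point that required real work is the triad condition itself, and this is exactly what the sign choice $\epsilon(f,\gamma)=-\mathrm{sign}(csv)$ was designed to enforce, so that the cardinalities of $H_1$ add rather than subtract at the third vertex. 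Everything else is bookkeeping.
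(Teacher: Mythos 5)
Your proposal is correct and follows essentially the same route as the paper: the discussion preceding the lemma already establishes the triad $(\mu_\gamma,\epsilon(f,\gamma)\lambda_T,\mu_\gamma+\epsilon(f,\gamma)\lambda_T)$ and the twist identification, and applying Proposition~\ref{all L-spaces} with $p=1$, $q=n$ (or equivalently the inductive remark after Proposition~\ref{sum L-space}) is exactly how the lemma is meant to be concluded. The matrix computation via $(g_n)_*(\beta)=f_*(\beta)-\epsilon(f,\gamma)n(\beta\cdot\gamma)f_*(\gamma)$ evaluated on $\phi_0$ and $\phi_1$ also matches the paper's bookkeeping.
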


\begin{lemma} \label{all columns-rows} 
Let $f$ and $\gamma$ be as above and let $n \geq 0$. Suppose that $W(g)$ is an L-space whenever $g_*$ has first column $\left(\begin{smallmatrix} a \\ c  \end{smallmatrix}\right)$. Then $W(h)$ is an L-space whenever $h_*$ has first column $\left(\begin{smallmatrix} a - n\epsilon(f, \gamma)su \\ c - n\epsilon(f, \gamma)sv  \end{smallmatrix}\right)$.  
\end{lemma}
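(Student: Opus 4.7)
The plan is to realize each such $W(h)$ as the manifold produced by Lemma \ref{L-space} applied to a pair $(g, \gamma')$, where $g$ is a homeomorphism with first column $(a, c)^T$ (so that $W(g)$ is an L-space by hypothesis) and $\gamma'$ is a slope on $T$ chosen to depend on $h$. The point is that we have the freedom to pick a different $\gamma'$ for each $h$; the original $\gamma$ only works for the ``canonical'' $h = g_n$ singled out by Lemma \ref{L-space}.

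First I would dispose of the trivial case $n = 0$, where $h_*$ has first column $(a, c)^T$ and the hypothesis applies directly. For $n \geq 1$, set $k = \gcd(u, v)$ and $(u_0, v_0) = (u/k, v/k)$. Note $k$ divides $s$ because $av - cu = s \det f_*$ is an integer combination of $u$ and $v$. Define $\gamma'$ to be the primitive class $h_*^{-1}(u_0 \phi_0 + v_0 \phi_1)$ on $T$, oriented so that its $\phi_1$-coordinate is $s' = s/k$. Since $v_0 \neq 0$ and $s' \neq 0$, neither $\gamma'$ nor $h_*(\gamma') = (u_0, v_0)$ equals $\pm \phi_0$.

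Next, put $M = nk^2 \epsilon(f, \gamma)$ and $g = h \circ D_{K_{\gamma'}}^M$. Using $(D_{K_{\gamma'}})_*(\phi_0) = \phi_0 + s' \gamma'$, one computes
\[
 g_*(\phi_0) = h_*(\phi_0) + M s' \, h_*(\gamma') = (A, C) + n \epsilon(f,\gamma) s (u, v) = (a, c),
\]
where $(A, C) = (a - n\epsilon(f,\gamma) s u,\, c - n \epsilon(f,\gamma) s v)$ is the prescribed first column of $h_*$. Because $\det g_* = \det h_* = \pm 1$, the matrix $g_*$ has first column $(a, c)^T$, so the hypothesis gives that $W(g)$ is an L-space. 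Since Dehn twists fix their defining slope, we also have $g_*(\gamma') = h_*(\gamma') = (u_0, v_0)$.

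Finally I would invoke Lemma \ref{L-space} applied to $(g, \gamma')$. All of its hypotheses hold, and a brief computation yields $\epsilon(g, \gamma') = -\operatorname{sign}(c s' v_0) = -\operatorname{sign}(csv/k^2) = \epsilon(f, \gamma)$. Taking $N = nk^2 \geq 0$ in the lemma then produces
\[
 W\bigl(g \circ D_{K_{\gamma'}}^{-\epsilon(g, \gamma') N}\bigr) = W\bigl(g \circ D_{K_{\gamma'}}^{-M}\bigr) = W(h)
\]
as an L-space, completing the argument. The main bookkeeping hurdle is verifying $\epsilon(g, \gamma') = \epsilon(f, \gamma)$, which ensures that the Dehn twist power delivered by Lemma \ref{L-space} exactly cancels the twist used to define $g$; this is immediate from $c s' v_0 = csv/k^2$ together with $k > 0$.
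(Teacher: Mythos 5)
Your argument is correct and follows the same basic strategy as the paper's proof: both introduce an auxiliary homeomorphism whose first column is $(a,c)^T$ and an auxiliary slope $\gamma'$, and then apply Lemma~\ref{L-space}. In fact one can check that your $\gamma'$ coincides with the paper's $\gamma' = (r-ms)\phi_0 + s\phi_1$ and your $g$ coincides with the paper's $k$, so the two proofs produce the same data. Your presentation is arguably tidier, since defining $g = h \circ D_{K_{\gamma'}}^M$ makes the equality $g \circ D_{K_{\gamma'}}^{-M} = h$ tautological and avoids the paper's explicit verification that $(k_n)_*$ has second column equal to that of $h_*$.

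Two small issues. The $\gcd$ bookkeeping is vacuous: $\gamma$ is a slope, hence primitive, and $f_* \in GL_2(\bZ)$, so $f_*(\gamma)$ is automatically primitive and $k = \gcd(u,v) = 1$ always. More substantively, your claim $h_*(\gamma') = (u_0,v_0)$ --- on which the cancellation in $g_*(\phi_0) = (a,c)^T$ depends --- holds only when $\det h_* = \det f_*$. If instead $\det h_* = -\det f_*$, then $h_*^{-1}(u_0\phi_0 + v_0\phi_1)$ has $\phi_1$-coordinate $-s'$, so after re-orienting $\gamma'$ to have $\phi_1$-coordinate $+s'$ you actually get $h_*(\gamma') = -(u_0,v_0)$, and the computation of $g_*(\phi_0)$ gives $(a - 2n\epsilon(f,\gamma)su,\, c - 2n\epsilon(f,\gamma)sv)$ rather than $(a,c)$. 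The fix is exactly what the paper does: invoke Remark~\ref{non-zero} at the outset to assume without loss of generality that $\det h_* = \det f_*$.
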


\begin{proof}
Consider a homeomorphism $h$ of $T$ such that $h_*$ has first column $\left(\begin{smallmatrix} a - n\epsilon(f, \gamma)su \\ c - n\epsilon(f, \gamma)sv  \end{smallmatrix}\right)$. To see that $W(h)$ is an L-space we can suppose that $\det(h_*) = \det(f_*)$ by Remark \ref{non-zero}. Then Lemma \ref{L-space} implies that the second column of $h_*$ can then be written
$$\begin{pmatrix} b' \\ d' \end{pmatrix} = \begin{pmatrix} b + n\epsilon(f, \gamma)ru \\ d + n\epsilon(f, \gamma)rv \end{pmatrix} + m \begin{pmatrix} a - n\epsilon(f, \gamma)su \\ c - n\epsilon(f, \gamma)sv  \end{pmatrix} = \begin{pmatrix} b + ma \\ d + mc \end{pmatrix}  + n \epsilon(f, \gamma) (r - ms) \begin{pmatrix} u \\     v \end{pmatrix}$$ 
for some $m \in \mathbb Z$. 

By hypothesis, $W(k)$ is an L-space where $k$ is the homeomorphism of $T$ with matrix 
$k_* = \left(\begin{smallmatrix} a  &   b + ma \\ c   &  d + mc \end{smallmatrix}\right)$. Let $\gamma' = r' \phi_0 + s \phi_1$ where $r' = r - ms$. Then 
$k_*(\gamma') = (a(r  - ms) + (b + ma)s) \phi_0 + (c(r - ms) + (d + mc)s) \phi_1 = (ar + bs) \phi_0 + (cr + ds)\phi_1 = u \phi_0 + v \phi_1 = f_*(\gamma)$. Hence $\gamma', k_*(\gamma') \ne \pm \phi_0$. Further note that $\epsilon(k, \gamma') = -\hbox{sign}(csv) = \epsilon(f, \gamma)$. Lemma \ref{L-space} then shows that $W(k_n)$ is an L-space where 
$$(k_n)_* = \begin{pmatrix} a - n\epsilon(f, \gamma)su &   b + ma + n\epsilon(f, \gamma)r'u \\ c - n\epsilon(f, \gamma)sv  &  d + mc + n\epsilon(f, \gamma)r'v \end{pmatrix}$$ 
The second column of $(k_n)_*$ is  
$$\begin{pmatrix} b + ma + n\epsilon(f, \gamma)r'u \\ d + mc + n\epsilon(f, \gamma)r'v \end{pmatrix} = \begin{pmatrix} b + ma \\ d + mc \end{pmatrix}  + n \epsilon(f, \gamma) (r - ms) \begin{pmatrix} u \\     v \end{pmatrix} = \begin{pmatrix} b' \\ d' \end{pmatrix}$$ 
Hence $(k_n)_* = h_*$, which completes the proof.
\end{proof}

\begin{proof}[Proof of Proposition \ref{prop: inductive case}]
Consider a torus semi-bundle $W(f)$ where $f_* = \left(\begin{smallmatrix} a  &   b \\ c   &  d \end{smallmatrix}\right)$ and $c \ne 0$. We will show that $W(f)$ is an L-space assuming that this is the case when $c = 1$ and $\det(f_*) = -1$. By Remark \ref{non-zero}, this assumption implies that $W(f)$ is an L-space whenever $|c| = 1$.  We proceed by induction on $|c|$.

Let $C$ be an integer of absolute value $2$ or larger and suppose that $W(f)$ is an L-space whenever $f_* = \left(\begin{smallmatrix} a  &   b \\ c   &  d \end{smallmatrix}\right)$ where $1 \leq |c| < |C|$. Consider a torus semi-bundle $W(F)$ where $F_* = \left(\begin{smallmatrix} A  &   B \\ C   &  D \end{smallmatrix}\right)$. By Remark \ref{non-zero} we can suppose that $C > 1$. Choose integers $a, c$ such that $aC - cA = 1$ and $0 < c < C$. Set $b = A - 2a, d = C - 2c$. Then $ad - bc = aC - cA = 1$ so $f_* = \left(\begin{smallmatrix} a  &   b \\ c   &  d \end{smallmatrix}\right) \in SL_2(\mathbb Z)$. By induction,  $W(g)$ is an L-space for all $g$ such that $g_*$ has first column $\left(\begin{smallmatrix} a  \\ c   \end{smallmatrix}\right)$.

Take $\gamma = \phi_0 + \phi_1$ so that $f_*(\gamma) = (a+b) \phi_0 + (c+d) \phi_1$. Then in the notation established earlier in this section, $u = a+b$ and $v = c+d  = C - c > 0$. Hence $\epsilon(f, \gamma) = -\hbox{sign}(csv) = -1$ so that Lemma \ref{all columns-rows} implies that $W(G)$ is an L-space for all $G$ such that $G_*$ has first column $\left(\begin{smallmatrix} a - \epsilon(f, \gamma)su \\ c -  \epsilon(f, \gamma)sv  \end{smallmatrix}\right) = \left(\begin{smallmatrix} a + (a+b) \\ c + (c+d)  \end{smallmatrix}\right) = \left(\begin{smallmatrix} A \\ C  \end{smallmatrix}\right)$. In particular $W(F)$ is an L-space. This completes the induction.
\end{proof}

\section{$2$-fold branched covers of alternating links} \label{sec:alt}

In this section we prove Theorem \ref{thm:alternating}. 

\subsection{Wada's group} 

Let $L$ be a link in $S^3$ and $D$ a diagram for $L$. Label the arcs of the diagram $1$ through $n$ as in Figure \ref{fig:arc-labels}. Define a group $\pi(D)$ as follows: $\pi(D)$ has generators $a_1, a_2, \ldots, a_n$ in one-one correspondence with the arcs of $D$, and relations of the form 
$$a_k a_j^{-1} a_i a_j^{-1}. \eqno{(1)}$$ 
in one-one correspondence with the crossings of $D$. 
Note that this relation is well-defined, as it is invariant under interchanging the indices $i$ and $k$.  

\begin{figure}[ht!]
\begin{center}
\labellist \small
	\pinlabel $j$ at 314 475
	\pinlabel $j$ at 204 335
	\pinlabel $i$ at 204 475
	\pinlabel $k$ at 314 335
\endlabellist
\raisebox{0pt}{\includegraphics[scale=0.5]{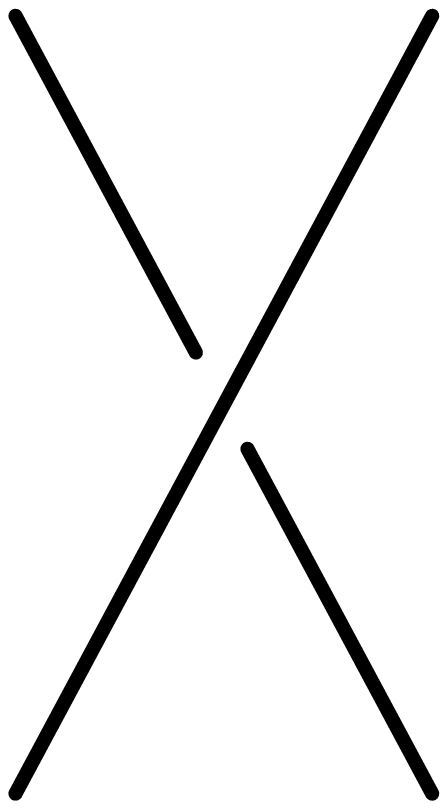}}
\qquad\qquad\qquad
\raisebox{0pt}{\includegraphics[scale=0.5]{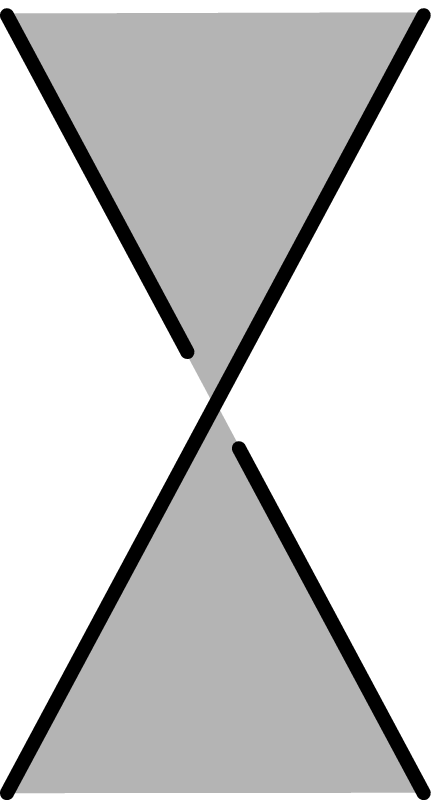}}
\end{center}
\caption{Labeling the arcs at a crossing (left), and the checkerboard convention for the black graph (right).}
\label{fig:arc-labels}
\end{figure}

This presentation was considered by Wada \cite{Wada1992}, who proved the following theorem. (See also \cite{Przytycki1998}.) We include a proof for completeness.

\begin{theorem} \label{polyak's group} 
$\pi(D) \cong \pi_1(\Sigma(L)) * \mathbb Z$ where $\Sigma(L)$ is the $2$-fold branched cover of $L$. 
\end{theorem}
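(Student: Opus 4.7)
The plan is to use a change of generators in Wada's presentation to split off a free $\bZ$ factor, and then to identify the remaining group with $\pi_1(\Sigma(L))$ via a Reidemeister--Schreier computation applied to the Wirtinger presentation.

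First I would introduce $t := a_1$ and $b_i := a_1^{-1} a_i$ for $i = 1, \ldots, n$, so that $a_i = t b_i$ and $b_1 = 1$. Substituting into Wada's crossing relation gives
\[
a_k a_j^{-1} a_i a_j^{-1} \;=\; (t b_k)(b_j^{-1} t^{-1})(t b_i)(b_j^{-1} t^{-1}) \;=\; t\bigl(b_k b_j^{-1} b_i b_j^{-1}\bigr) t^{-1},
\]
so each crossing relation is equivalent to the $t$-free relation $b_k b_j^{-1} b_i b_j^{-1} = 1$. Since $t$ appears in no relation and $b_1 = 1$ is tautological, Tietze transformations yield $\pi(D) \cong \bZ * G(D)$, where
\[
G(D) \;=\; \bigl\langle b_2, \ldots, b_n \;\big|\; b_k b_j^{-1} b_i b_j^{-1} \text{ at each crossing}\bigr\rangle.
\]

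Second I would show $G(D) \cong \pi_1(\Sigma(L))$. The branched cover $\Sigma(L)$ is obtained from the connected double cover $\tilde X \to X = S^3 \setminus L$ corresponding to the total linking number mod $2$ by filling the preimage of $L$, so $\pi_1(\Sigma(L)) = \pi_1(\tilde X)/\langle\langle a_i^2 : 1 \le i \le n\rangle\rangle$. Applying Reidemeister--Schreier with Schreier transversal $\{1, a_1\}$ produces the subgroup generators $c_i = a_i a_1^{-1}$ and $d_i = a_1 a_i$, with $c_i d_i = a_i^2$. Imposing $a_i^2 = 1$ turns this into $d_i = c_i^{-1}$, so the $d_i$'s are redundant. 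A direct calculation then shows that, regardless of the sign of the crossing, both lifts of the Wirtinger relator reduce after this substitution to the single relation $c_k c_j^{-1} c_i c_j^{-1} = 1$. The resulting presentation of $\pi_1(\Sigma(L))$ matches that of $G(D)$ under $b_i \leftrightarrow c_i$.

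The main technical point is this Reidemeister--Schreier computation: one must verify that at every crossing, and for each of the two Schreier lifts of the Wirtinger relator, the rewriting (after the substitution $d_i = c_i^{-1}$) collapses to exactly Wada's symmetric relation. The underlying reason this works is that modulo $a_i^2 = 1$ every meridian is self-inverse, so both Wirtinger conventions $a_k = a_j^{\pm 1} a_i a_j^{\mp 1}$ become the single involutory-quandle relation $a_k = a_j a_i a_j$, which is precisely the content of Wada's relation. Confirming this coincidence of the two lifts and its sign-independence is a brief but careful arithmetic check involving the coset representatives, and it is the only place where the precise shape of Wada's relation is used. Combining the two steps gives $\pi(D) \cong \bZ * \pi_1(\Sigma(L)) \cong \pi_1(\Sigma(L)) * \bZ$.
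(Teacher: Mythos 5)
Your argument is correct and follows essentially the same route as the paper: both proofs pass to the double cover of the link exterior, impose the branching relations, and observe that at each crossing the two lifts of the Wirtinger relator collapse (modulo the branching relations) to a single relation which is exactly Wada's. The only organizational difference is bookkeeping: the paper realizes the extra free $\mathbb{Z}$ topologically by attaching an arc $e$ to the two-sheeted cover $\widetilde X$ of the presentation $2$-complex, so that $\pi_1(\widetilde X_+) \cong \pi_1(\widetilde X) * \mathbb Z$ and the $\alpha_i,\beta_i$ play the role of your $c_i,d_i$; you instead split the $\mathbb Z$ off from $\pi(D)$ algebraically via $t=a_1$, $b_i=a_1^{-1}a_i$, and then compute $\pi_1(\widetilde M_L)$ by Reidemeister--Schreier, which is just the algebraic form of the paper's covering-space computation with a slightly different choice of spanning tree ($\alpha_1$ rather than $e$).
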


\begin{proof} Let $M_L$ be the complement of $L$. The Wirtinger presentation of $\pi_1(M_L)$ corresponding to the diagram $D$ has generators $a_1, a_2, \ldots , a_n$ as above and a relation at each crossing, of the form 
$$a_k^{-1} a_j a_i a_j^{-1} \eqno{(2)}$$ 
for suitable choice of labels $i$ and $k$. 
Let $X$ be the $2$-complex of this presentation, with one $0$-cell $x$, $n$ oriented $1$-cells, and $n$ $2$-cells. Thus $\pi_1(X) \cong \pi_1(M_L)$. Let $\widetilde X$ be the (connected) double cover of $X$ determined by the homomorphism $\pi_1(X) \to \mathbb Z/2, a_i \mapsto 1$ for all $i$. In $\widetilde X$ the $0$-cell $x$ of $X$ lifts to two $0$-cells $x_1$ and $x_2$, and each $1$-cell $a_i$, say, of $X$ lifts to two $1$-cells that we will denote by $\alpha_i$ and $\beta_i$, where $\alpha_i$ is oriented from $x_1$ to $x_2$ and $\beta_i$ from $x_2$ to $x_1$. Let $\widetilde X_+$ be obtained from $\widetilde X$ by adjoining an arc $e$, identifying $\partial e$ with $\{x_1, x_2\}$. Then $\pi_1(\widetilde X_+) \cong \pi_1(\widetilde X) * \mathbb Z$. Taking as ``base-point" for $\widetilde X_+$ the maximal tree $e$ we obtain the following presentation for $\pi_1(\widetilde X_+)$: generators $\alpha_1, \beta_1, \alpha_2, \beta_2, \ldots , \alpha_n, \beta_n$ and pairs of relations, corresponding to the two lifts of each $2$-cell in $X$,
$$\beta_k^{-1} \beta_j \alpha_i \alpha_j^{-1} \eqno{(3)}$$
$$\alpha_k^{-1} \alpha_j \beta_i \beta_j^{-1}  \eqno{(4)}$$
Since $\pi_1(\widetilde X) \cong \pi_1(\widetilde M_L)$, where $\widetilde M_L$ is the double cover of $M_L$, and since $a_1, a_2, \ldots, a_n$ are meridians of $L$, we obtain a presentation for $\pi_1(\Sigma(L)) * \mathbb Z$ by adding the branching relations 
$$\alpha_1 \beta_1 = \alpha_2 \beta_2 = \ldots = \alpha_n \beta_n = 1$$
Thus eliminating $\beta_1 = \alpha_1^{-1}, \beta_2 = \alpha_2^{-1}, \ldots , \beta_n = \alpha_n^{-1}$, equations (3) and (4) become
$$\alpha_k \alpha_j^{-1} \alpha_i \alpha_j^{-1}$$
$$\alpha_k^{-1} \alpha_j \alpha_i^{-1} \alpha_j $$
Since the second relation is a consequence of the first, those relations may be eliminated. This gives the presentation of $\pi(D)$ defined above. 
\end{proof} 

\subsection{The proof of Theorem \ref{thm:alternating}}

Let $L$ be an alternating link. We begin by reducing the proof of the theorem to the case where $L$ is non-split.

Suppose that $L$ is split and Theorem \ref{thm:alternating} holds for non-split alternating links. The fundamental group of the $2$-fold cover of $S^3$ branched over $L$ is of the form
$$\pi_1(\Sigma(L)) \cong F_{n-1} * \pi_1(\Sigma(L_1)) * \pi_1(\Sigma(L_2)) * \ldots * \pi_1(\Sigma(L_n))$$ 
where $n \geq 2$, $F_{n-1}$ is free of rank $n - 1$, and $L_1, L_2, \ldots , L_n$ are non-split alternating links. 
By assumption, $\pi_1(\Sigma(L_j))$ is not left-orderable for each $j$. Hence if $\pi_1(\Sigma(L))$ is left-orderable, each $\pi_1(\Sigma(L_j))$ is the trivial group. It follows that each $L_j$ is a trivial knot and therefore $L$ is a trivial link of two or more components, so Theorem \ref{thm:alternating} holds.  

Assume next that $L$ is non-split and let $D$ be an alternating diagram for $L$. Label its arcs $1$ through $n$ and note that the crossings of $D$ correspond somewhat ambiguously to ordered label triples $(i, j, k)$ where $j$ is the label of the overcrossing arc. (Thus $(i, j, k)$ and $(k, j, i)$ represent the same crossing.)

Theorem \ref{thm:alternating} clearly holds when $L$ is the trivial knot so we suppose below that it isn't. Then $\pi_1(\Sigma(L))$ is non-trivial so that $\pi(D) \cong \pi_1(\Sigma(L)) * \mathbb Z$ is not abelian. Vinogradov proved that the free product of two non-trivial groups is left-orderable if and only if the two factors are left-orderable \cite{Vinogradov1949}. Thus $\pi_1(\Sigma(L))$ is left-orderable if and only if $\pi(D) \cong \pi_1(\Sigma(L)) * \mathbb Z$ is left-orderable, so the theorem will follow if we show that the hypothesis that $\pi(D)$ is left-orderable implies that $\pi(D)$ is abelian. Suppose then that ``$<$" is a left-ordering on $\pi(D)$. 

Consider the black-white checkerboard pattern on $S^2$ determined by $D$ where we assume that the black regions lie to the left as we pass over a crossing. (This convention is illustrated in Figure \ref{fig:arc-labels}.)  

Fix a crossing $(i, j, k)$. Relation (1) shows that $a_j^{-1} a_i = (a_j^{-1} a_k)^{-1}$. It follows that exactly one of the following three possibilities occurs:
$$a_i < a_j < a_k \eqno{(5)}$$ 
$$a_k < a_j < a_i \eqno{(6)}$$
$$a_i = a_j = a_k \eqno{(7)}$$
We use these options to define a semi-oriented graph $\Gamma(D)$ in $S^2$ as follows: the vertices of $\Gamma(D)$ correspond to the black regions of $D$, the edges $e$ correspond to the crossings of $D$, and the embedding in $S^2$ is that determined by $D$. Note that $\Gamma(D)$ is connected as $L$ is non-split. 

Fix an edge $e = e(i, j, k)$ and let $R_i, R_k$ be the black regions containing the arcs labelled $i, k$ respectively. Orient $e$ from $R_i$ to $R_k$ if possibility (5) occurs, from $R_k$ to $R_i$ if (6) occurs, and do not orient it if (7) occurs (see Figure \ref{fig:orientations}).  

\begin{figure}[ht!]
\begin{center}
\labellist \small
	\pinlabel $j$ at 115 195
	\pinlabel $j$ at 14 70
	\pinlabel $i$ at 14 195
	\pinlabel $k$ at 115 70
	\pinlabel $e$ at 75 51
	\pinlabel $a_i<a_j<a_k$ at 65 -10
\endlabellist
\raisebox{-3pt}{\includegraphics[scale=0.5]{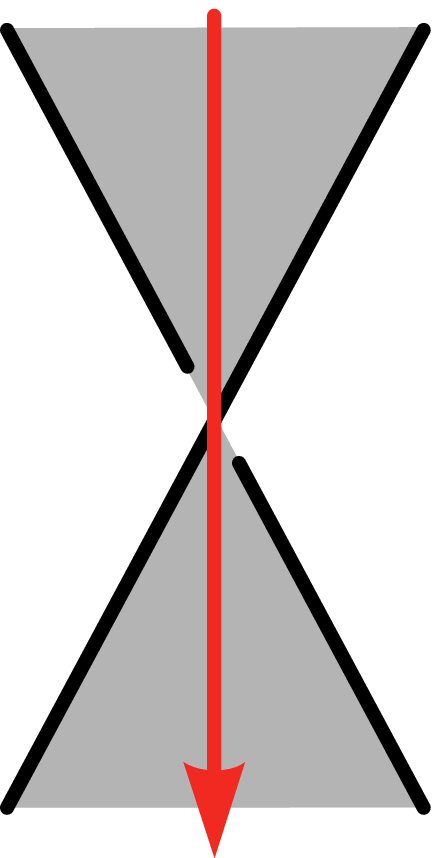}}
\qquad\qquad
\labellist \small
	\pinlabel $j$ at 115 190
	\pinlabel $j$ at 14 65
	\pinlabel $i$ at 14 190
	\pinlabel $k$ at 115 65
	\pinlabel $e$ at 75 46
	\pinlabel $a_k<a_j<a_i$ at 65 -15
\endlabellist
\raisebox{0pt}{\includegraphics[scale=0.5]{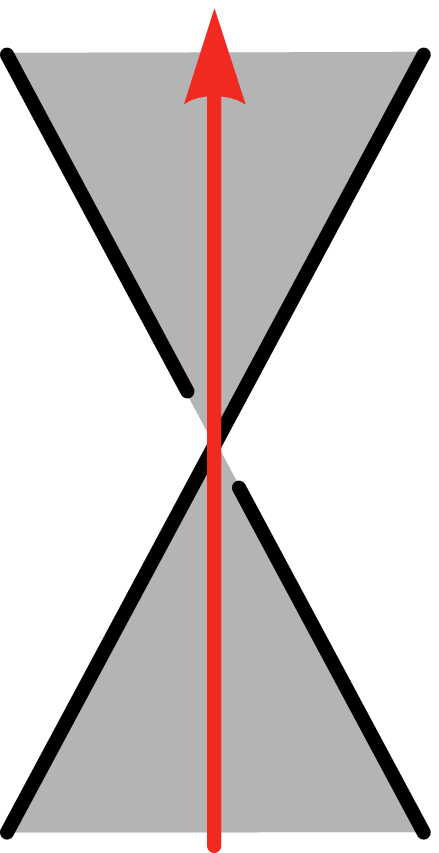}}
\qquad\qquad
\labellist \small
	\pinlabel $j$ at 115 190
	\pinlabel $j$ at 14 65
	\pinlabel $i$ at 14 190
	\pinlabel $k$ at 115 65
	\pinlabel $e$ at 75 46
	\pinlabel $a_i=a_j=a_k$ at 65 -15
	\endlabellist
\raisebox{0pt}{\includegraphics[scale=0.5]{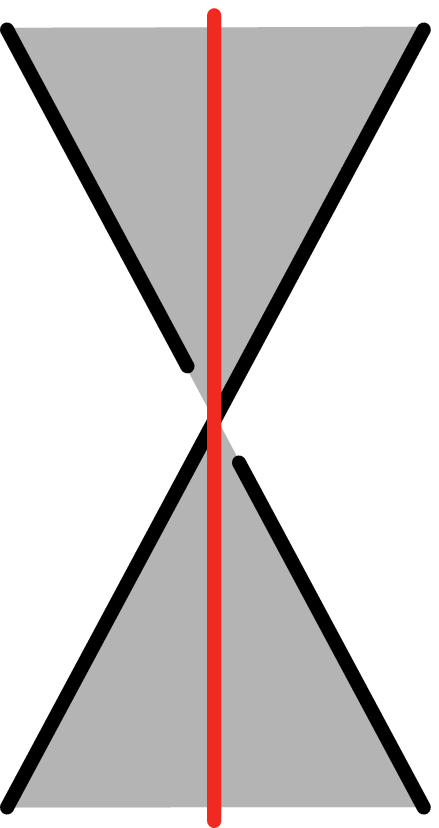}}
\end{center}
\caption{Orientation conventions for the edges of the semi-oriented graph $\Gamma(D)$.}
\label{fig:orientations}
\end{figure}

A {\it circuit} in $\Gamma(D)$ is a simple closed curve in $S^2$ determined by a sequence of edges $C: e_1, e_2, \ldots , e_m$ of $\Gamma(D)$ indexed so that successive edges are incident, and those edges of $C$ which are oriented, are oriented coherently. 

A {\it cycle} in $\Gamma(D)$ is an innermost circuit in $\Gamma(D)$. Equivalently it is a circuit which bounds a white region of $D$. 

\begin{lemma} \label{cycles unoriented} 
Each edge contained in a cycle of $\Gamma(D)$ is unoriented. 
\end{lemma}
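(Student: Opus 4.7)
The plan is to suppose $\pi(D)$ carries a left order $<$, fix a cycle $C$ of $\Gamma(D)$ bounding a white region $W$, and show that coherent orientation of $C$ together with the left order forces every edge of $C$ to satisfy case $(7)$.

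First I would set up notation around $\partial W$. Label the crossings of $D$ on $\partial W$ in cyclic order as $c_1,\ldots,c_m$, and let $\alpha_r$ be the arc of $D$ appearing on $\partial W$ between $c_r$ and $c_{r+1}$ (indices mod $m$). Write $(i_r,j_r,k_r)$ for the labels at $c_r$ with $j_r$ the overcrossing. The corner of $W$ at $c_r$ is bounded by exactly two arc pieces, one a piece of the overcrossing arc $j_r$ and one a piece of one of the two undercrossing arcs; write $u_r\in\{i_r,k_r\}$ for the label of the undercrossing that appears on $\partial W$ at $c_r$, and $u_r'\in\{i_r,k_r\}\setminus\{u_r\}$ for the other.

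The alternating hypothesis enters through the well-known fact that each arc of $D$ is the overcrossing arc at exactly one crossing: on any strand the pattern alternates over/under, so between two consecutive underpasses lies a unique overpass. Applied to $\alpha_r$, exactly one of $c_r,c_{r+1}$ is the overcrossing endpoint of $\alpha_r$ while the other is an underpass endpoint. Reversing the traversal direction of $C$ if necessary, I may arrange that $\alpha_r=j_r$ at $c_r$ and $\alpha_r=u_{r+1}$ at $c_{r+1}$ for every $r$; this choice propagates consistently around $\partial W$ because each $\alpha_r$ switches role between its two endpoint crossings. As elements of $\pi(D)$, this yields the key identification $a_{j_{r-1}}=a_{u_r}$.

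Next I would match the cycle direction at each $c_r$ against the orientation convention on $e_r$. The cycle enters $c_r$ from the vertex $R_{u_r}$ (the black region opposite $W$ across $\alpha_{r-1}$) and leaves to $R_{u_r'}$. A short case check in the two subcases $u_r=i_r$ (where $C$ crosses $e_r$ from $R_{i_r}$ to $R_{k_r}$) and $u_r=k_r$ (where it crosses in the opposite direction) shows that in either case, if $e_r$ is coherently oriented with $C$ then the relevant inequality from the left order, whether case $(5)$ or case $(6)$, specializes to $a_{u_r}<a_{j_r}$; combined with the identification above, this is $a_{j_{r-1}}<a_{j_r}$. If instead $e_r$ is unoriented, case $(7)$ gives $a_{j_{r-1}}=a_{j_r}$.

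Chaining these (in)equalities around $C$ produces $a_{j_0}\le a_{j_1}\le\cdots\le a_{j_m}=a_{j_0}$, so the existence of even a single oriented edge in $C$ would make one inequality strict, giving $a_{j_0}<a_{j_0}$, impossible in a left-ordered group. Hence every edge of $C$ must be unoriented. The main obstacle is the bookkeeping in the third step: carefully pairing up the cycle's local direction of traversal through each $c_r$ with the orientation convention on $e_r$. The alternating structure is exactly what makes the two subcases $u_r=i_r$ and $u_r=k_r$ collapse to the same inequality $a_{j_{r-1}}<a_{j_r}$ and allows the transitivity argument to close.
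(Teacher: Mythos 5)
Your proof is correct and is essentially the paper's argument: walk around the boundary of the white region, use the alternating structure to identify the overcrossing arc at each corner with the undercrossing boundary arc at the next corner, and chain the resulting inequalities around the region to force equality. The one point to tighten is that a cycle's oriented edges are only required to be coherent with \emph{each other}, not necessarily with the traversal direction you fixed to ensure $\alpha_r=j_r$ at $c_r$; if they happen to run against that traversal your chain becomes $a_{j_0}\ge a_{j_1}\ge\cdots\ge a_{j_m}=a_{j_0}$, which still collapses to equality, and this is precisely what the paper's ``either $\ldots$ or $\ldots$'' formulation covers in a single stroke.
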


\begin{proof} Suppose that $\Gamma(D)$ contains a cycle and let $R$ be the white region of $D$ it determines. We can label its boundary edges $i_1, i_2, \ldots , i_r$ so that the cycle is given, up to reversing its order, by the sequence of crossings $(i_1, i_2, k_1), (i_2, i_3, k_2), \ldots, (i_r, i_1, k_r)$ (see Figure \ref{fig:pentagon}). The cycle condition implies that either 
$$a_{i_1} \leq a_{i_2} \leq \ldots \leq a_{i_r} \leq a_{i_1}$$
or
$$a_{i_1} \geq a_{i_2} \geq \ldots \geq a_{i_r} \geq a_{i_1}$$
Thus all inequalities are equalities, so none of the edges of the cycle are oriented. 
\end{proof} 

A {\it sink}, respectively {\it source}, of $\Gamma(D)$ is a vertex $v$ of $\Gamma(D)$ such that each oriented edge of $\Gamma(D)$ incident to $v$ points into, respectively away from, $R$. 

\begin{lemma} \label{sinks/sources unoriented} 
Each edge incident to a source or sink in $\Gamma(D)$ is unoriented. 
\end{lemma}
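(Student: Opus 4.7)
The plan is to adapt the strategy of Lemma \ref{cycles unoriented} to this setting: walk around $\partial R$ in a consistent direction, apply the sink (resp.\ source) condition at each corner to obtain a cyclic chain of inequalities on the arcs we meet, and then use Wada's relation at each corner to upgrade these equalities to the conclusion that every corner is unoriented.

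More precisely, I would orient $\partial R$ counterclockwise (with $R$ on the left) and enumerate the arcs $a_{l_1}, a_{l_2}, \ldots, a_{l_s}$ of $D$ encountered along the walk. At each corner $c_m = (i_m, j_m, k_m)$ of $\partial R$, the region $R$ occupies one of the two opposite black quadrants $R_{i_m}$ or $R_{k_m}$, and the two arc pieces meeting at the corner are the overstrand $a_{j_m}$ together with the understrand on $R$'s side --- namely $a_{i_m}$ if $R = R_{i_m}$ or $a_{k_m}$ if $R = R_{k_m}$. The key geometric input, which will need verification by local inspection of the checkerboard convention, is that in this counterclockwise traversal the under/over labels at each corner are consistent: with appropriate indexing, $a_{l_m}$ is the understrand at $c_m$ on $R$'s side while $a_{l_{m+1}} = a_{j_m}$ is the overstrand.

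Granted this consistency, the sink condition at an oriented corner $c_m$ gives $a_{l_m} > a_{l_{m+1}}$: e.g.\ if $R = R_{k_m}$ we have $a_{i_m} < a_{j_m} < a_{k_m}$, so $a_{l_m} = a_{k_m} > a_{j_m} = a_{l_{m+1}}$; the case $R = R_{i_m}$ is symmetric. At an unoriented corner we already have $a_{l_m} = a_{l_{m+1}}$. Stringing these inequalities around the cycle produces
$$a_{l_1} \geq a_{l_2} \geq \cdots \geq a_{l_s} \geq a_{l_1},$$
forcing equality throughout, so $a_{j_m}$ equals the neighbouring understrand on $R$'s side at every corner.

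Now Wada's relation $a_{k_m} = a_{j_m} a_{i_m}^{-1} a_{j_m}$ at $c_m$ finishes the job: if $a_{j_m} = a_{k_m}$ the relation becomes $a_{j_m} = a_{j_m} a_{i_m}^{-1} a_{j_m}$, so $a_{i_m} = a_{j_m}$; and if $a_{j_m} = a_{i_m}$, the relation gives $a_{k_m} = a_{i_m}$. Either way all three arcs at $c_m$ agree, so $c_m$ is unoriented. The source case is handled by the same argument with all inequalities reversed (or by reversing the traversal direction). The main obstacle is the geometric consistency step: verifying that the orientation convention ``black lies to the left as we pass over'' forces CCW traversal of $\partial R$ to arrive on the understrand and leave on the overstrand at every corner uniformly, so that the inequalities at different corners line up coherently into a single monotone chain.
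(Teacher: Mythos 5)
Your proof is correct and follows essentially the same route as the paper's: derive a cyclic chain of inequalities from the sink/source condition by walking around the black region, force equality around the cycle, and then deduce that each corner is unoriented. The geometric consistency you flag as the main obstacle is precisely what the paper leaves implicit by appealing to its Figure 4, and it does hold given the ``black to the left of the overstrand'' convention; your explicit use of Wada's relation at the end is just an unpacking of the paper's tacit observation that equality of two of the three generators at a crossing forces equality of all three via the trichotomy (5)--(7).
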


\begin{proof} Let $R$ be a black region determined by $D$ and let $i_1, i_2, \ldots , i_r$ be the labels of its boundary arcs indexed (mod $r$) so that the crossings of $D$ incident to $R$ are determined by the black corners $(i_1, i_2), (i_2, i_3), \ldots , (i_r, i_1)$ (see Figure \ref{fig:pentagon}).

\begin{figure}[ht!]
\begin{center}
\labellist \small
	\pinlabel $R$ at 142 140
	\pinlabel $i_2$ at 176 189
	\pinlabel $i_1$ at 112 189
	\pinlabel $i_r$ at 80 122
	\pinlabel $\cdots$ at 142 77
	\pinlabel $i_3$ at 207 122
\endlabellist
\raisebox{0pt}{\includegraphics[scale=0.5]{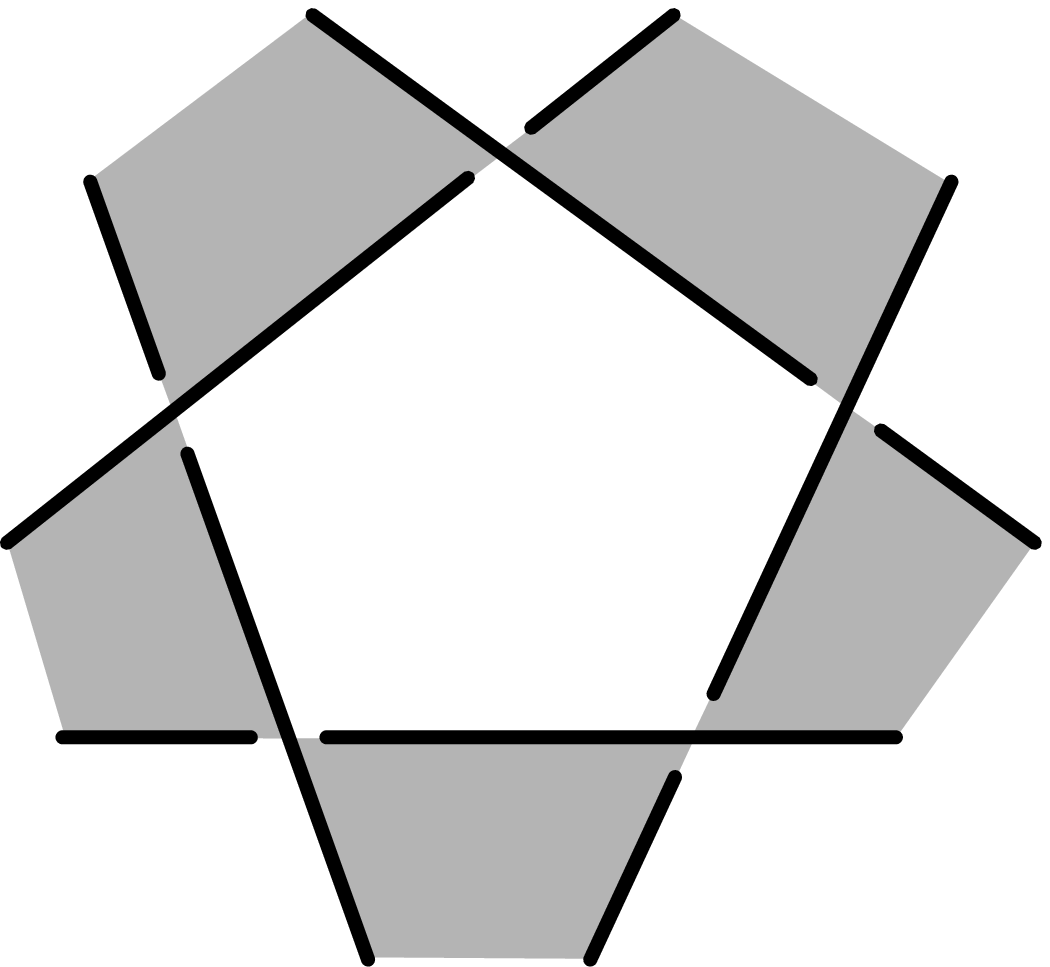}}\qquad\qquad
\labellist \small
	\pinlabel $R$ at 142 140
	\pinlabel $i_2$ at 176 189
	\pinlabel $i_1$ at 112 189
	\pinlabel $i_r$ at 80 122
	\pinlabel $\cdots$ at 142 77
	\pinlabel $i_3$ at 207 122
\endlabellist
\raisebox{0pt}{\includegraphics[scale=0.5]{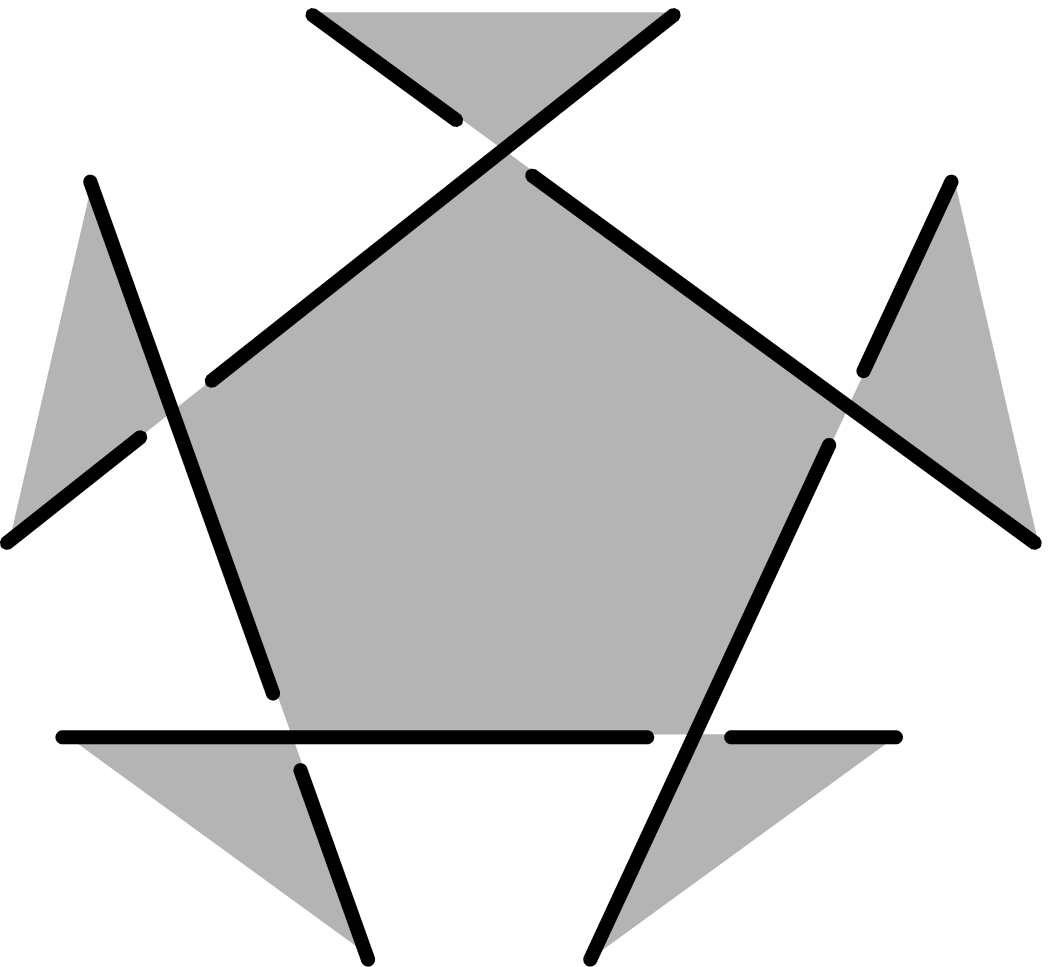}}
\end{center}
\caption{White and black regions of the checkerboard colouring considered in the proofs of Lemma \ref{cycles unoriented} and Lemma \ref{sinks/sources unoriented}, respecively. }
\label{fig:pentagon}
\end{figure}

Then $a_{i_1} \leq a_{i_2} \leq \ldots \leq a_{i_r} \leq a_{i_1}$ if the vertex $v$ is a sink, while $a_{i_1} \geq a_{i_2} \geq \ldots \geq a_{i_r} \geq a_{i_1}$ if it is a source. In either case, $a_{i_1} = a_{i_2} = \ldots = a_{i_r}$, so no edge incident to $v$ is oriented. 
\end{proof} 

On the other hand, we have the following result. 

\begin{lemma} \label{some oriented}
Let $\Gamma$ be a connected semi-oriented graph in $S^2$ without sinks or sources containing oriented edges. If some edge $e$ of $\Gamma$ is oriented, then there is a cycle of $\Gamma$ containing an oriented edge.
\end{lemma}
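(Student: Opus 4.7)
The plan is to produce an oriented circuit first, then refine it by a minimality argument until it becomes an innermost circuit (a cycle).

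First I would observe that the directed subgraph $\vec\Gamma$ of $\Gamma$ consisting of all oriented edges (with their given orientations) has the property that every vertex incident to an oriented edge has both positive in-degree and positive out-degree; this is immediate from the no-sinks-or-sources hypothesis. Since $\vec\Gamma$ is finite, starting at any oriented edge and repeatedly extending forward using the positive out-degree produces a directed walk that eventually revisits a vertex, yielding a directed cycle. In the terminology of the paper, such a directed cycle is a simple closed curve in $S^2$ all of whose edges are oriented coherently; that is, an oriented circuit of $\Gamma$.

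Next, among all circuits of $\Gamma$ that contain at least one oriented edge, I would choose $C$ that minimizes the number $|D|$ of faces of $\Gamma$ contained in the smaller of the two disks $D$ bounded by $C$ in $S^2$. The goal is to show $|D|=1$, since then $D$ is itself a face and $C$ is an innermost circuit, i.e., a cycle of $\Gamma$ containing an oriented edge. Supposing for contradiction $|D|\geq 2$, I split into cases. If there is a chord of $C$ inside $D$ (an edge $f$ of $\Gamma$ with both endpoints on $C$, with $f\subset D$ and $f\not\subset C$), then $f$ together with the two sub-arcs $A_1, A_2$ of $C$ yields two smaller simple closed curves $C_i=A_i\cup f$. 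A case analysis on whether $f$ is oriented (and if so, which direction it points relative to $C$'s coherent traversal) shows that at least one of $C_1, C_2$ is a coherent circuit of $\Gamma$ containing an oriented edge, and since both enclose strictly fewer faces than $C$, this contradicts minimality. If there is no chord, then because $|D|\geq 2$ forces the interior part of $\Gamma\cap D$ to topologically separate $D$, and because $\Gamma\cap D$ is connected (edges cannot cross $C$), one can find a simple path $P$ in $\Gamma\cap D$ joining two distinct vertices $v_0, v_1$ of $C$ and meeting $C$ only at its endpoints. If $P$ uses no oriented edges, then $A_1\cup P$ and $A_2\cup P$ are coherent circuits (the unoriented $P$ imposes no coherence constraint), each enclosing strictly fewer faces than $C$, and at least one inherits an oriented edge from $C$, contradicting minimality.

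The main obstacle is the last subcase, in which every simple path in the interior of $D$ joining two points of $C$ must traverse some oriented edge. Here the no-sinks-or-sources hypothesis must be invoked on interior vertices: starting from an oriented edge with at least one endpoint in $\mathrm{int}(D)$, one extends a directed walk in the interior using only oriented edges, which is possible because every interior vertex on such a walk has positive in- and out-degree in $\vec\Gamma$. Finiteness forces this walk either to close up to a directed cycle $C'$ lying in $\mathrm{int}(D)$ --- in which case $C'$ is itself an oriented circuit of $\Gamma$ enclosing strictly fewer faces than $C$, contradicting minimality --- or to reach $C$ in both its forward and backward extensions at two (possibly distinct) points of $C$. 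In the latter case, concatenating the resulting oriented interior path with the sub-arc of $C$ whose coherent orientation matches yields a smaller coherent circuit with an oriented edge (a verification strictly analogous to the oriented-chord subcase above), once again contradicting minimality. Any way one cuts it, the hypothesis $|D|\geq 2$ leads to a contradiction, so $|D|=1$ and $C$ is the desired cycle.
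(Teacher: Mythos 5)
Your proof is correct in its essential mechanism and takes a genuinely different route from the paper's. The paper first produces a circuit \emph{all} of whose edges are oriented (using the no-sink condition to extend forward), takes such a circuit $C$ that is innermost among all-oriented circuits, shows using the no-sink/no-source condition that every edge of $\Gamma$ strictly inside the disk $E$ bounded by $C$ is unoriented, and then concludes by taking the white region inside $E$ adjacent to an edge of $C$: its boundary is then a cycle containing that oriented edge. You instead minimize over a larger class — circuits containing at least one oriented edge, ranked by number of enclosed faces — and argue directly that the minimizer bounds a single face. This is a cleaner minimality criterion in one sense (it targets the conclusion head-on), but the price is the more intricate case analysis (chord vs.\ no chord, oriented vs.\ unoriented), which the paper sidesteps by keeping the interior entirely unoriented. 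The common engine in both arguments is extending an oriented edge forward and backward via no-sinks/no-sources until it closes up or hits $C$; your Case~2b is essentially the paper's innermost-circuit argument transplanted into the interior of $D$.

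One point you should shore up in Case~2: the assertion that $|D|\geq 2$, no chord, and connectivity of $\Gamma\cap D$ together yield a simple path $P$ through $\mathrm{int}(D)$ joining two \emph{distinct} vertices of $C$ and meeting $C$ only at its endpoints. This can fail if the interior part of $\Gamma\cap D$ is attached to $C$ at a single vertex (a ``pendant'' configuration with an interior circuit hanging off $C$), in which case no such $P$ exists even though $|D|\geq 2$. Your argument, as written, doesn't produce a contradiction in that subcase when the interior is entirely unoriented. This is a real gap relative to the generality of the lemma's statement, though it is a borderline pathology: the paper's proof has an analogous blind spot (its final step implicitly assumes the face of $E$ adjacent to $C$ has a simple-closed-curve boundary), and both are harmless for the intended application because the relevant face boundaries in the checkerboard graph $\Gamma(D)$ of a reduced non-split alternating diagram are simple closed curves. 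To make your argument self-contained you would want either to add that hypothesis explicitly, or to dispose of the pendant case directly (e.g.\ by noting that if the interior carries any oriented edge it closes up to a strictly smaller oriented circuit, and if it is entirely unoriented then any interior circuit together with the attaching vertex contradicts the planar structure you need).
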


\begin{proof} Since $\Gamma$ has no sinks, the vertex at the head of an oriented edge is incident to the tail of another oriented edge of $\Gamma$. Starting from some oriented edge, we obtain a first return circuit, all of whose edges are oriented.

Choose a circuit $C$ in $\Gamma$ which is innermost among the family of circuits all of whose edges are oriented. Then $C$ bounds a disk $E$ such that each circuit $C' \ne C$ contained in $\Gamma \cap E$ has an unoriented edge. Suppose some edge $e$ of $(\Gamma\cap E) \setminus C$ is oriented. Arguing as in the first paragraph of this proof, and using our innermost assumption on $C$, we obtain an oriented path of edges starting with $e$ and ending at some vertex in $C$. Similarly, since $\Gamma$ has no sources, we can carry out the same construction backwards, starting from the tail of $e$. This produces a (non-empty) oriented path of edges in $(\Gamma \cap E) \setminus C$ starting and ending on $C$, which in turn gives a circuit of oriented edges that contradicts the innermost property of $C$. Thus each edge of $\Gamma \setminus C$ contained in $E$ is unoriented. It follows that the boundary of any region of $D$ contained in $E$ whose boundary contains an edge of $C$ determines a cycle containing an oriented edge. 
\end{proof} 

The last three lemmas combine to show that each edge of $\Gamma(D)$ is unoriented. It follows that for each crossing 
$(i, j, k)$, 
$$a_i = a_j = a_k \eqno{(8)}$$ 
Hence if $R$ is a black region of $D$ with boundary arcs labelled successively $i_1, i_2, \ldots , i_r$  (see Figure 4), then $a_{i_1} = a_{i_2} = \ldots = a_{i_r}$. Equation (8) shows that the $a_l$ determined by boundary arcs of black regions sharing a corner are the same. Since any two black regions are connected by a chain of black regions for which successive regions share a corner, it follows that $a_1 = a_2 = \ldots = a_n$. Hence $\pi(D) \cong \pi_1(\Sigma(L)) * \mathbb Z$ is abelian. As we noted above, this implies Theorem \ref{thm:alternating}. 
\qed  

\section{Surgeries on alternating knots} \label{sec:fig 8}

In this section we prove Propositions \ref{prop:alt} and \ref{prop:f8}. These results provide examples, many hyperbolic, of non-L-spaces with left-orderable fundamental groups. 

\begin{proof}[Proof of Proposition \ref{prop:alt}]  
Assertion (1) of this proposition is an immediate consequence of \cite[Theorem 1.5]{OSz2005-lens} and Proposition \ref{thm:sf}.  

Roberts has shown \cite{Roberts1995} that $S^3_{1/q}(K)$ admits a taut foliation under the hypotheses of assertions (2) and (3) of this proposition. We claim that it is also atoroidal. Menasco \cite[Corollary 1]{Menasco1982} has shown that prime alternating knots are atoroidal. On the other hand, Patton \cite{Patton1995} has shown that an alternating knot which admits an essential punctured torus is either a two-bridge knot or a three-tangle Montesinos knot. As the boundary slopes of these types of alternating knots are even integers \cite{HT1985}, \cite{HO1989}, $S^3_{1/q}(K)$ is atoroidal under the hypotheses of assertions (2) and (3). As $H_1(S^3_{1/q}(K)) = 0$, the foliation is co-oriented and the representation $\pi_1(S^3_{1/q}(K)) \to \hbox{Homeo}_+(S^1)$ provided by Thurston's universal circle construction \cite{CD2003} lifts to $\widetilde{\hbox{Homeo}}_+(S^1)$. As this representation is injective, \cite[Theorem 1.1 (1)]{BRW2005} implies that $\pi_1(S^3_{1/q}(K))$ is left-orderable. This completes the proof. 
\end{proof} 

Now we proceed to the proof of Proposition \ref{prop:f8}.
Our argument is based on a result of Khoi stated on page 795 of \cite{Khoi2003} and justified through a reference to a MAPLE calculation, though no details are given. Because of the importance of his result to our treatment, we provide a proof of it below. See Proposition \ref{section}.

Let $M$ denote the exterior of the figure eight knot. We know from \cite[Example 3.13]{BRW2005} that there are a continuous family of representations with non-abelian image \[\rho_s : \pi_1(M) \to PSL_2(\bR), \; s \geq \textstyle\frac{1 + \sqrt{5}}{2}\] and a continuous function \[g: \left[\textstyle\frac{1 + \sqrt{5}}{2}, \infty\right) \to [0, \infty)\] such that $g(s) = r \in \bQ$ if and only if $\rho_{s}(\mu^p \lambda^q) = \pm I$ where $r = \pq$ is a reduced fraction. Further, the image of $g$ contains $[0, 4)$. 

Consider the universal covering homomorphism $\psi: \widetilde{SL_2} \to PSL_2(\bR)$. The kernel $\mathcal{K}$ of $\psi$ is the centre of $\widetilde{SL_2}$ and is isomorphic to $\mathbb Z$. There is a lift of $\rho_s$ to a homomorphism $\tilde \rho_s: \pi_1(M) \to \widetilde{SL_2}$ since the obstruction to its existence is the Euler class $e(\rho_s) \in H^2(\pi_1(M); \mathcal{K}) \cong H^2(M; \mathbb Z) \cong 0$ \cite[Section 6.2]{Ghys2001}. The set of all such lifts is a transitive $H^1(\pi_1(M); \mathcal{K})$ set where for $\phi \in H^1(\pi_1(M); \mathcal{K})$   
\[(\phi \cdot \widetilde \rho_s)(\gamma) = \phi(\gamma)\widetilde \rho_s(\gamma)\] 

There is an identification $\widetilde{SL_2} \cong \Delta \times \bR$ where $\Delta = \{z \in \mathbb C : |z| < 1\}$ in which the following properties hold:
 \begin{itemize}

\item the identity is represented by $(0,0)$;

\vspace{.3cm} \item $\mathcal{K}$ corresponds to $\{(0, k \pi) : k \in \mathbb Z\}$ 

\vspace{.3cm} \item if the image of $(z, \omega)$ in $SL_2(\mathbb R)$ has positive eigenvalues, then there is an even integer $2j$ such that $|\omega - 2j \pi| < \frac{\pi}{2}$. Further, $(z, \omega)$ is conjugate to an element of the form $(r, 2j \pi)$ where $r \in (-1,1)$;  

\vspace{.3cm} \item for $r \in (-1,1)$, the centralizer of $(r, 0)$ is contained in $\bigcup_{j \in \mathbb Z} (-1,1) \times \{j \pi\}$.

\end{itemize}
See \cite[Section 2]{Khoi2003}, for instance, for the details. 

The action of $PSL_2(\mathbb R)$ on the circle induces an inclusion $\widetilde{SL_2} \leq \widetilde{\hbox{Homeo}}_+(S^1) = \{f \in \hbox{Homeo}_+(\mathbb R) : f(x+1) = f(x) + 1 \hbox{ for all } x \in \mathbb R\}$. In this case, 
 \begin{itemize}

\item if $(z, \omega)$ is a commutator, then $-\frac{3\pi}{2} < \omega < \frac{3\pi}{2}$  \cite[Inequality 4.4 and Proposition 4.8]{Wood}

\end{itemize}

\begin{proposition} \label{section} {\rm (Khoi)}
Let $\tilde \rho: \pi_1(M) \to \widetilde{SL_2}$ be a homomorphism. Then up to conjugation and replacing $\tilde \rho$ by a representation $\tilde \rho' = \phi \cdot \widetilde \rho$ for some $\phi \in H^1(\pi_1(M); \mathcal{K})$, we can suppose that $\tilde \rho(\pi_1(\partial M))$ is contained in the $1$-parameter subgroup $(-1, 1) \times \{0\}$ of $\widetilde{SL_2}$. 
\end{proposition}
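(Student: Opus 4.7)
The plan follows Khoi's strategy: normalize $\tilde\rho$ by a combination of conjugation in $\widetilde{SL_2}$ and modification by a cocycle $\phi\in H^1(\pi_1(M);\mathcal{K})$, then appeal to a direct computation to pin down the image of the longitude. First, I exploit the abelianization input: since $H_1(M;\bZ)\cong\bZ$ is generated by $[\mu]$, the longitude $\lambda$ lies in $[\pi_1(M),\pi_1(M)]$. Because $\mathcal{K}$ is central in $\widetilde{SL_2}$ and thus carries the trivial $\pi_1(M)$-action, $H^1(\pi_1(M);\mathcal{K})=\operatorname{Hom}(\pi_1(M)^{\mathrm{ab}},\mathcal{K})\cong\bZ$, and each $\phi$ automatically satisfies $\phi(\lambda)=0$ while $\phi(\mu)$ may be chosen arbitrarily in $\mathcal{K}=\{(0,k\pi):k\in\bZ\}$. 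Hence modification by $\phi$ shifts $\tilde\rho(\mu)$ by any element of $\mathcal{K}$ without touching $\tilde\rho(\lambda)$.

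Next I conjugate $\rho$ in $PSL_2(\bR)$ to place $\rho(\mu)$ in the diagonal one-parameter subgroup. (If $\rho$ is abelian the statement is immediate, since $\rho(\lambda)=1$ and the lift factoring through $\pi_1(M)^{\mathrm{ab}}$ sends $\lambda$ to the identity, which lies in $(-1,1)\times\{0\}$; so assume $\rho$ is non-abelian, in which case $\rho(\mu)$ can be put on the diagonal torus.) By the bulleted description of the preimage in $\widetilde{SL_2}$ of positive-eigenvalue elements, after a further conjugation $\tilde\rho(\mu)=(r_\mu,j_\mu\pi)$ for some $r_\mu\in(-1,1)$ and integer $j_\mu$. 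Commutation $\mu\lambda=\lambda\mu$ together with the bulleted description of the centralizer of $(r,0)$ then forces $\tilde\rho(\lambda)\in\bigcup_{j\in\bZ}(-1,1)\times\{j\pi\}$, so $\tilde\rho(\lambda)=(r_\lambda,j_\lambda\pi)$. Choosing $\phi(\mu)=(0,-j_\mu\pi)$ produces a modified lift $\tilde\rho'$ with $\tilde\rho'(\mu)=(r_\mu,0)\in(-1,1)\times\{0\}$, while $\tilde\rho'(\lambda)=(r_\lambda,j_\lambda\pi)$ is untouched.

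It remains to show $j_\lambda=0$. Since the figure eight knot is fibered of genus one with once-punctured-torus fiber, $\lambda$ equals the single commutator $[a,b]$ of two fiber generators in $\pi_1(M)$, so $\tilde\rho'(\lambda)$ is a commutator in $\widetilde{SL_2}$. The bulleted Milnor--Wood inequality then gives $|j_\lambda|\pi<\tfrac{3\pi}{2}$, whence $j_\lambda\in\{-1,0,1\}$. The main obstacle, and the step that Khoi settles by a MAPLE calculation, is excluding $j_\lambda=\pm 1$. I would complete this either by explicit computation, parameterizing the relevant $PSL_2(\bR)$-representations of $\pi_1(M)$ \`a la Riley, lifting to $\widetilde{SL_2}$, and reading off the second coordinate of the longitude word directly; or by a continuity argument, since the integer $j_\lambda$ is locally constant on the space of representations normalized as above, and one can deform $\rho$ to the abelian boundary of the component of the character variety it lies in, where $\rho(\lambda)=1$ forces $j_\lambda=0$ and this value then persists along the deformation.
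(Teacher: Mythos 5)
Your outline follows Khoi's strategy and the paper's proof up to a point, but you explicitly leave the decisive step---ruling out $j_\lambda=\pm1$---unresolved, proposing only that one ``would complete'' it by either a direct computation or a continuity argument. Neither of these is actually carried out, so as written there is a genuine gap at exactly the place where the paper does the real work. The paper closes this gap not by computation or deformation but by the trace computation of Lemma~\ref{sl2}: using Lemma~\ref{image} together with Goldman's characterization of $SU(2)$-characters (Proposition~\ref{su2}), one shows that for any $\chi_\rho\in X_{SL_2(\bR)}(\pi_1(M))$ one has $\operatorname{trace}(\rho(\lambda))=\kappa(a,a,\tfrac{a}{a-1})\ge 2$, so the eigenvalues of $\rho(\lambda)$ are positive reals. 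That positivity is precisely what feeds into the bulleted property of $\widetilde{SL_2}$ to force the second coordinate $\omega$ of $\tilde\rho(\lambda)$ to satisfy $|\omega-2k\pi|<\pi/2$ for some \emph{even} $2k$; combined with the Milnor--Wood bound $|\omega|<3\pi/2$ this immediately gives $k=0$ and $|\omega|<\pi/2$, with no odd multiples of $\pi$ ever in contention.

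Secondarily, note a difference of normalization: you first put $\tilde\rho(\mu)$ in the set $(-1,1)\times\{j\pi\}$ and then deduce the shape of $\tilde\rho(\lambda)$ from the centralizer condition, whereas the paper first puts $\tilde\rho(\lambda)$ in $(-1,1)\times\{0\}$ (using the eigenvalue and commutator bounds just described) and then reads off that $\tilde\rho(t)\in(-1,1)\times\{j\pi\}$ from commutativity, cancelling the $j\pi$ by the cocycle $\phi_0$. Your order is workable, but it obscures the fact that the entire difficulty is concentrated in controlling $\omega(\tilde\rho(\lambda))$, and it is exactly there that the eigenvalue information from the character variety is indispensable. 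If you wish to salvage your continuity argument instead, you would at minimum need to justify that the chosen lift varies continuously and that the relevant path in the $SL_2(\bR)$-representation variety connecting a nonabelian $\rho$ to a reducible one with $\rho(\lambda)=1$ stays inside representations on which $j_\lambda$ is defined and locally constant; none of that is established in your sketch.
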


\begin{proof}[Proof of Proposition \ref{prop:f8}] 
Let $r = \pq \in [0,4)$ and fix $s$ so that $g(s) = r$. Then $\widetilde \rho_{s}(\mu^p \lambda^q) \in \mathcal{K} = \{(0, k\pi) : k \in \mathbb Z\}$. On the other hand, by Proposition \ref{section} we may assume $\tilde \rho_s(\pi_1(\partial M)) \subset (-1, 1) \times \{0\}$. Thus $\widetilde \rho_{s}(\mu^p \lambda^q) = (0,0) = 1$. It follows that $\tilde \rho_s$ induces a homomorphism $\pi_1(S^3_r(K)) \to \widetilde{SL_2}$ with non-abelian image. Since $S_r^3(K)$ is irreducible for all $r$ \cite[Theorem 2(a)]{HT1985} and $\widetilde{SL_2}$ is left-orderable \cite{Conrad1959}, \cite[Theorem 1.1]{BRW2005} implies that $\pi_1(S^3_r(K))$ is left-orderable for all $r \in [0, 4)$. Since $K$ is amphicheiral, $\pi_1(S^3_{r}(K))$ is left-orderable for $r \in (-4,0]$, so we are done.
\end{proof} 

The rest of this section is devoted to the proof of Proposition \ref{section}. First we develop some background material. 

Consider the presentation 
$$\pi_1(M) = \langle x, y, t : txt^{-1} = xyx^2, tyt^{-1} = x^{-1}\rangle$$ 
Here $t$ is a meridional class and $\lambda = [x,y]$ a longitudinal class. The reader will verify that $[t, \lambda] = 1$. 
Set $F = \langle x, y \rangle \lhd  \pi$.  

We denote the $SL_2(\mathbb C), SL_2(\mathbb R)$, and $SU(2)$ character varieties of a group $\Gamma$ by $X_{SL_2(\mathbb C)}(\Gamma)$, $X_{SL_2(\mathbb R)}(\Gamma)$, and $X_{SU(2)}(\Gamma)$ respectively. The character of a representation $\rho$ will be denoted by $\chi_\rho$. 

There is a homeomorphism \cite[Proposition 4.1]{Goldman1988} 
$$\Psi: X_{SL_2(\mathbb C)}(F) \cong \mathbb C^3, \chi \mapsto (\chi(x), \chi(y), \chi(xy))$$
It is known that 
$$\Psi^{-1}(\mathbb R^3) = \{\chi_\rho : \rho \hbox{ has image in } SL_2(\mathbb R) \hbox{ or } SU(2)\}$$ 
See \cite[Proposition III.1.1]{MS1984}. 

\begin{lemma} \label{image}
The image of the composition of $\Psi$ with the restriction induced map $X_{SL_2(\mathbb C)}(\pi) \to X_{SL_2(\mathbb C)}(F)$ is $X_0 = \{(a , a, \frac{a}{a-1}) \in \mathbb C^3 : a \ne 1\}$. 
\end{lemma}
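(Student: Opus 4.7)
The plan is to prove both containments separately: the forward inclusion is a short trace computation using the defining relations of $\pi$, and the reverse requires building a representation of $\pi$ extending a prescribed pair $(X,Y)$, for which we will invoke the standard fact that two absolutely irreducible $SL_2(\mathbb C)$-representations of a group with the same character are conjugate.

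For image $\subseteq X_0$: take any $\rho\colon \pi \to SL_2(\mathbb C)$ and set $a = \chi_\rho(x)$. The relation $tyt^{-1} = x^{-1}$ gives $\chi_\rho(y) = \chi_\rho(x^{-1}) = a$ directly. Multiplying the two defining relations yields $t(xy)t^{-1} = (xyx^2)(x^{-1}) = xyx$, so $\chi_\rho(xy) = \chi_\rho(xyx) = \chi_\rho(x^2y)$ by cyclicity of trace. The Cayley--Hamilton identity $\rho(x)^2 = a\rho(x) - I$ then gives $\chi_\rho(x^2 y) = a\chi_\rho(xy) - \chi_\rho(y) = a\chi_\rho(xy) - a$. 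Equating these expressions yields $(a-1)\chi_\rho(xy) = a$, which both forces $a \ne 1$ and pins down $\chi_\rho(xy) = a/(a-1)$.

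For image $\supseteq X_0$: fix $a \ne 1$ and realize the triple $(a, a, a/(a-1))$ as the restricted character of some $SL_2(\mathbb C)$-representation of $\pi$. Take $X = \left(\begin{smallmatrix} u & 1 \\ 0 & u^{-1}\end{smallmatrix}\right)$ and $Y = \left(\begin{smallmatrix} u & 0 \\ v & u^{-1}\end{smallmatrix}\right)$ with $u + u^{-1} = a$ and $v$ chosen so $\operatorname{tr}(XY) = u^2 + u^{-2} + v = a/(a-1)$. A direct trace calculation, essentially the forward argument run in reverse, verifies that $(XYX^2, X^{-1})$ has the same trace triple $(a,a,a/(a-1))$ as $(X, Y)$. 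For $a$ outside a finite exceptional set this pair is absolutely irreducible, so the conjugacy theorem provides $T \in SL_2(\mathbb C)$ with $TXT^{-1} = XYX^2$ and $TYT^{-1} = X^{-1}$, and setting $\rho(x) = X$, $\rho(y) = Y$, $\rho(t) = T$ defines a representation of $\pi$ with the desired restricted character.

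The main obstacle is the handful of values of $a$ where the triple corresponds to a reducible character of $F$ (equivalently $\operatorname{tr}[X, Y] = 2$); a computation shows this occurs precisely for $a = 2$ and the two roots of $a^2 + a - 1 = 0$. For these values, the parameterization above is reducible and the conjugacy theorem does not apply directly, but each case admits a direct construction: $a = 2$ is realized by the trivial representation, and $a = (-1 \pm \sqrt{5})/2$ is realized by $X = Y = \operatorname{diag}(u, u^{-1})$ with $u$ a primitive fifth root of unity together with $T = \left(\begin{smallmatrix} 0 & 1 \\ -1 & 0\end{smallmatrix}\right)$, using that $u^5 = 1$ forces $TXT^{-1} = \operatorname{diag}(u^{-1}, u) = X^4 = XYX^2$ and $TYT^{-1} = X^{-1}$.
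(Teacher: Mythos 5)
Your proof is correct and follows essentially the same strategy as the paper: the forward inclusion is the identical trace computation, and the reverse inclusion hinges on the same principle that equality of restricted characters on $F$ produces a conjugating element $T$ realizing the automorphism $x\mapsto xyx^2$, $y\mapsto x^{-1}$, which then serves as $\rho(t)$. The one place where you diverge technically is the choice of conjugacy theorem. You invoke the statement for \emph{irreducible} $SL_2(\mathbb C)$-representations, which is why you must identify the exceptional locus $\kappa(a,a,\tfrac{a}{a-1})=2$ (namely $a=2$ and the roots of $a^2+a-1=0$, which you compute correctly) and build explicit representations in those cases. The paper instead starts from an arbitrary \emph{semisimple} representation $\rho_0$ of $F$ with the given character and uses the fact that semisimple $SL_2(\mathbb C)$-representations with equal characters are conjugate; since $\rho_0\circ\theta$ is again semisimple with the same character, the argument runs uniformly for all $a\neq 1$ and the case split disappears. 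Your exceptional-case constructions (trivial representation for $a=2$; diagonal $X=Y$ with $u$ a primitive fifth root of unity and $T=\left(\begin{smallmatrix}0&1\\-1&0\end{smallmatrix}\right)$ for $a^2+a-1=0$) do check out, so the proof is complete either way, just with a bit more bookkeeping than necessary.
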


\begin{proof} The identity $\hbox{trace}(AB) + \hbox{trace}(AB^{-1}) = \hbox{trace}(A) \hbox{trace}(B)$ for $A, B \in SL_2(\mathbb C)$ implies that for each $\chi \in X_{SL_2(\mathbb C)}(\pi_1(M))$ and $z, w \in \pi_1(M)$ we have 
$$\chi(zw) + \chi(zw^{-1}) = \chi(z) \chi(w) \eqno{(9)}$$
Given such a $\chi$ set $a = \chi(x)$. The relation $x^{-1} = t y t^{-1}$ implies that $\chi(y) = a$. Next note that $\chi(xy) = \chi(txyt^{-1}) = \chi(xyx) = \chi(xyx) + \chi(xyx^{-1}) - \chi(xyx^{-1}) = \chi(xy)\chi(x) - \chi(y) = a\chi(xy) - a$. Thus $\chi(xy) = \frac{a}{a-1}$ so the image of the composition of $\Psi$ with the restriction induced map $X_{SL_2(\mathbb C)}(\pi) \to X_{SL_2(\mathbb C)}(F)$ is contained in $X_0$.  

Conversely fix $(a,a, \frac{a}{a-1}) \in X_0$ and consider the isomorphism $\theta: F \to F$ given by $x \mapsto xyx^2, y \mapsto x^{-1}$. There is a semisimple representation $\rho_0: \pi_1(F) \to SL_2(\mathbb C)$ such that $\Psi(\chi_{\rho_0}) =  (a , a, \frac{a}{a-1})$. Let $\rho_1 = \rho_0 \circ \theta$. It is easy to see that $\rho_0$ and $\rho_1$ have the same character. Since they are semisimple there is an $A \in SL_2(\mathbb C)$ such that $\rho_1 = A \rho_0 A^{-1}$. It is easy to see then that there is a representation $\rho: \pi \to SL_2(\mathbb C)$ such that $\rho(t) = A$ and $\rho|\pi_1(F) = \rho_0$. Hence $(a , a, \frac{a}{a-1})$ lies in the image of image of the composition of $\Psi$ with the restriction induced map $X_{SL_2(\mathbb C)}(\pi) \to X_{SL_2(\mathbb C)}(F)$, which completes the proof of the lemma.  
\end{proof}

Let 
$\kappa: \mathbb C^3 \to \mathbb C$ be given by
$$\kappa(a,b,c) = a^2 + b^2 + c^2 - abc - 2$$  
Then for $\chi \in X_{SL_2(\mathbb C)}(F)$ and $(\chi(x), \chi(y), \chi(xy)) = (a,b,c)$, Identity (9) implies that $\chi([x,y])) = \kappa(a,b,c)$. 

\begin{proposition} \label{su2} {\rm \cite[Theorem 4.3]{Goldman1988}}
Let $\chi \in X_{SL_2(\mathbb C)}(F)$ and set $(a,b,c) = (\chi(x), \chi(y), \chi(xy))$. Then $\chi \in X_{SU(2)}(F)$ if and only if $(a,b, \kappa(a,b,c)) \in [-2, 2]^3$.  
\qed
\end{proposition}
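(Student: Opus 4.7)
My approach is to prove each implication directly, with the reverse direction reducing to an explicit parameterization of $SU(2)$-pairs. For the forward direction, if $\chi=\chi_\rho$ with $\rho\co F\to SU(2)$, then since every element of $SU(2)$ has trace in $[-2,2]$, the quantities $a=\chi(x)$, $b=\chi(y)$, and $\kappa(a,b,c)=\chi([x,y])$ all lie in $[-2,2]$.

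For the converse, I plan to construct an explicit $\rho\co F\to SU(2)$ realizing $\chi$. After conjugation, take $X:=\left(\begin{smallmatrix} e^{i\alpha} & 0 \\ 0 & e^{-i\alpha}\end{smallmatrix}\right)$ with $2\cos\alpha=a$, which is possible since $a\in[-2,2]$, and seek $Y\in SU(2)$ of trace $b$ with $\hbox{trace}(XY)=c$. Writing $Y=\left(\begin{smallmatrix} p & q \\ -\bar q & \bar p\end{smallmatrix}\right)$ with $|p|^2+|q|^2=1$ and setting $p=\tfrac{b}{2}+i\beta$ for $\beta\in\bR$ automatically gives $\hbox{trace}(Y)=b$, while a direct computation yields $\hbox{trace}(XY) = 2\operatorname{Re}(e^{i\alpha}p) = \tfrac{ab}{2} - 2\beta\sin\alpha$.

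Assuming first $a\neq\pm 2$, so $\sin\alpha\neq 0$, the choice $\beta:=(ab-2c)/(4\sin\alpha)$ forces $\hbox{trace}(XY)=c$, and the existence of a matching $q\in\bC$ with $Y\in SU(2)$ reduces to verifying $|q|^2=1-b^2/4-\beta^2\geq 0$. Using $\sin^2\alpha=(4-a^2)/4$ and clearing denominators, this amounts to $(4-a^2)(4-b^2)-(ab-2c)^2\geq 0$, which in turn simplifies to $4-a^2-b^2-c^2+abc\geq 0$, i.e.\ $\kappa(a,b,c)\leq 2$; this holds by hypothesis. The degenerate cases $a=\pm 2$ give $X=\pm I$, which force $c=\pm b$ and $\kappa=2$ automatically, and any $Y\in SU(2)$ of trace $b$ completes the construction. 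In all cases, the resulting $\rho$ has trace triple $(a,b,c)$, so $\chi_\rho=\chi$ by the Fricke coordinates $\Psi$.

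The one nontrivial step is the algebraic identity $(4-a^2)(4-b^2)-(ab-2c)^2=-4(a^2+b^2+c^2-abc-4)$, which equates the constraint $|q|^2\geq 0$ on the $SU(2)$-realizability of $Y$ with the clean inequality $\kappa(a,b,c)\leq 2$. The lower bound $\kappa\geq -2$ plays no role in the construction: it is automatic from $[X,Y]\in SU(2)$, and appears in the statement merely to phrase the condition symmetrically.
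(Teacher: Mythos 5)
The paper gives no proof of this proposition; it is quoted verbatim from Goldman \cite[Theorem 4.3]{Goldman1988} and terminated with \qed to indicate a citation rather than an argument. Your proof is correct and self-contained, which is a genuine addition. The forward direction is the expected trace estimate, and the converse is exactly the right elementary strategy: diagonalize $X$, parameterize $Y$ over $SU(2)$ with the free imaginary part $\beta$ of its $(1,1)$-entry, solve $\operatorname{trace}(XY)=c$ for $\beta$, and observe that the off-diagonal norm constraint $|q|^2 = 1 - b^2/4 - \beta^2 \geq 0$ collapses, after the identity $(4-a^2)(4-b^2)-(ab-2c)^2 = -4\bigl(\kappa(a,b,c)-2\bigr)$, to $\kappa\le 2$. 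One small point worth tightening in the degenerate case: when $a=\pm2$ the identity $\kappa = 2 + (b\mp c)^2 \ge 2$ shows that it is the \emph{hypothesis} $\kappa\le 2$ that forces $c=\pm b$ (and then $\kappa=2$), after which $X=\pm I$ and any $Y$ of trace $b$ realizes $(a,b,c)$; your phrasing reads as if $X=\pm I$ imposes the constraint, which inverts the logic but does not affect correctness. You also correctly note that the lower bound $\kappa\ge -2$ is not used in the construction and is automatic once a realization exists, since $\operatorname{trace}\rho([x,y])\in[-2,2]$. Finally, the appeal to injectivity of $\Psi$ to conclude $\chi_\rho=\chi$ is legitimate, as that is Goldman's Proposition 4.1 quoted earlier in this same section of the paper.
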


A straightforward calculation shows that for $a \in \mathbb R \setminus \{1\}$, 
\begin{itemize}
\vspace{-.1cm} \item $\kappa(a , a, \frac{a}{a-1}) = (a-1)^2 + (a-1) - 2 + (a-1)^{-1} + (a-1)^{-2} \geq -2$, and 

\vspace{.2cm} \item $(a,a, \kappa(a,a, \frac{a}{a-1})) \in [-2, 2]^3$ if and only if $-\big(\frac{\sqrt{5} + 1}{2}\big) \leq a \leq \frac{\sqrt{5} - 1}{2}$ or $a = 2$.
\end{itemize}

\begin{lemma} \label{sl2}
The image of the composition of $\Psi$ with the restriction map $X_{SL_2(\mathbb R)}(\pi_1(M)) \to X_{SL_2(\mathbb R)}(F)$ is $Y_0 = \{(a,a,\frac{a}{a-1}) : a \in \mathbb R \setminus \{1\}, \kappa(a,a,\frac{a}{a-1}) \geq 2\}$. In particular, if $\chi_\rho \in X_{SL_2(\mathbb R)}(\pi_1(M))$, the eigenvalues of $\rho(\lambda)$ are positive reals. 
\end{lemma}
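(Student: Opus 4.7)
My plan is to prove the set equality image $= Y_0$ by two opposite inclusions, then deduce the eigenvalue claim. Both inclusions rest on the identity $\operatorname{tr}\rho(\lambda) = \kappa(a,a,a/(a-1))$ together with the two bullet observations recorded just before the lemma ($\kappa \ge -2$ always, and $(a,a,\kappa)\in[-2,2]^3$ iff $a\in[-(1+\sqrt{5})/2,(\sqrt{5}-1)/2]\cup\{2\}$).

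For the inclusion image $\subseteq Y_0$, let $\rho: \pi_1(M) \to SL_2(\mathbb R)$. Lemma \ref{image} identifies $\chi_\rho|_F$ with a triple $(a,a,a/(a-1))$, and $a = \operatorname{tr}\rho(x)\in\mathbb R$ is automatic. Suppose for contradiction that $\kappa := \kappa(a,a,a/(a-1)) < 2$. A direct factorization $\kappa - 2 = (u-1)^2(u^2 + 3u + 1)/u^2$ with $u = a-1$ shows that $\kappa < 2$ forces $a \in (-(1+\sqrt{5})/2, (\sqrt{5}-1)/2)$, whence the second bullet places $(a,a,\kappa)\in[-2,2)^3$. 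By Proposition \ref{su2}, $\chi_\rho|_F$ lies in $X_{SU(2)}(F)$, and the semisimple $SU(2)$ representation realizing it is irreducible because $\operatorname{tr}\rho(\lambda) = \kappa \ne 2$ forces $\rho(\lambda)\ne\pm I$. But an irreducible $SU(2)$ character cannot be realized in $SL_2(\mathbb R)$: any semisimple representative is $SL_2(\mathbb C)$-conjugate to one in $SU(2)$, and the intersection of $g\cdot SU(2)\cdot g^{-1}$ with $SL_2(\mathbb R)$ is compact in $SL_2(\mathbb R)$, hence contained in a conjugate of $SO(2)$ and therefore abelian, contradicting irreducibility. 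This contradicts the existence of $\rho|_F: F \to SL_2(\mathbb R)$ having this same character, forcing $\kappa \ge 2$.

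For the reverse inclusion $Y_0 \subseteq$ image, I mirror the construction in the proof of Lemma \ref{image} but work over $\mathbb R$ throughout. Given $(a,a,a/(a-1))\in Y_0$, the condition $\kappa \ge 2$ combined with Proposition \ref{su2} and Morgan--Shalen's real/compact dichotomy \cite{MS1984} produces a semisimple $\rho_0: F \to SL_2(\mathbb R)$ with the prescribed character. The automorphism $\theta: F\to F$ with $\theta(x)=xyx^2,\ \theta(y)=x^{-1}$ gives $\rho_1 := \rho_0\circ\theta: F\to SL_2(\mathbb R)$ with the same character, and Lemma \ref{image} supplies some $A\in SL_2(\mathbb C)$ with $A\rho_0 A^{-1} = \rho_1$. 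Since the conjugacy conditions $AP = \rho_1(w)A$ for $w=x,y$ form a linear system in the entries of $A$ with real coefficients $\rho_0(w),\rho_1(w)$, the complex solution space has the same dimension as its real part, so a real solution exists and can be rescaled to have determinant $1$; declaring $\rho(t)=A$ then extends to an $SL_2(\mathbb R)$ representation of $\pi_1(M)$ with the required character.

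The eigenvalue statement follows at once: $\rho(\lambda)\in SL_2(\mathbb R)$ has trace $\kappa \ge 2$, so its characteristic polynomial $x^2 - \kappa x + 1$ has discriminant $\kappa^2 - 4 \ge 0$, giving real eigenvalues with product $1$ and sum $\ge 2$, both necessarily positive. I expect the main obstacle to lie in the reverse inclusion, specifically in ensuring that the conjugating $A$ can be normalized to $SL_2(\mathbb R)$ rather than merely $GL_2(\mathbb R)$ (a sign/determinant issue not resolved by rescaling in dimension two). The reducible boundary case $\kappa = 2$, where $\rho_0$ can no longer be taken irreducible and Schur-lemma uniqueness of $A$ is unavailable, is the most delicate point and will likely require a separate direct upper-triangular construction.
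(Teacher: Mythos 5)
Your proof of the inclusion $\operatorname{image}\subseteq Y_0$ is correct, though it takes a more computational route than the paper. You assume $\kappa<2$, derive from the factorization $\kappa-2=(u-1)^2(u^2+3u+1)/u^2$ that $a$ then falls in the range making $(a,a,\kappa)\in[-2,2]^3$, conclude $\chi_\rho|_F\in X_{SU(2)}(F)$ via Proposition \ref{su2}, and rule this out by noting that $g\cdot SU(2)\cdot g^{-1}\cap SL_2(\mathbb R)$ is a compact subgroup of $SL_2(\mathbb R)$, hence conjugate into $SO(2)$, hence abelian. The paper reaches the same conclusion geometrically: an irreducible $SL_2(\mathbb R)$-representation preserves a geodesic plane $P\subset\mathbb H^3$, so if it also conjugated into $SU(2)$ it would fix a point of $P$ and therefore lie in a conjugate of $SO(2)$, contradicting irreducibility. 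Both routes deliver $\kappa\ge2$, and both handle the reducible case separately via $\chi_\rho(F)=\{2\}$; yours is more self-contained, the paper's more conceptual.

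The more significant divergence is that the paper's proof stops after establishing $\operatorname{image}\subseteq Y_0$ and the eigenvalue consequence; it does not address the reverse inclusion $Y_0\subseteq\operatorname{image}$ at all, even though the lemma is phrased as an equality. (Only the forward direction and the positivity of the eigenvalues of $\rho(\lambda)$ are used downstream, in Proposition \ref{section} and Proposition \ref{prop:f8}.) You attempt the reverse inclusion and correctly isolate the genuine obstruction: the conjugator $A$ supplied by Schur's lemma spans a one-dimensional real solution space, all of whose nonzero elements have the same sign of determinant, so when that sign is negative $A$ cannot be normalized into $SL_2(\mathbb R)$ by scaling. That is exactly the right worry, and it is not resolved by the argument of Lemma \ref{image}, which works over $\mathbb C$ and never faces the sign issue. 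A cleaner route to the reverse inclusion for $\kappa>2$ would be to invoke the explicit one-parameter family $\rho_s$ of irreducible $PSL_2(\mathbb R)$-representations from \cite[Example 3.13]{BRW2005}, already quoted at the start of the section, after lifting to $SL_2(\mathbb R)$; and your separate concern about the boundary $\kappa=2$ is also well-founded, since there the $F$-representation may be forced into $SO(2)$ and the required outer conjugation has determinant $-1$.
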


\begin{proof} 
Fix $\chi_\rho \in X_{SL_2(\mathbb R)}(\pi_1(M))$. If $\rho$ is reducible, then $\chi(\pi_1(F)) = \{2\}$ since $\pi_1(F)$ is contained in the commutator subgroup of $\pi_1(M)$. Thus $\Psi(\chi_\rho) = (2,2,2) \in Y_0$. Suppose then that $\chi_\rho$ is irreducible. The image of $\rho$ leaves a geodesic plane $P$ in $\mathbb H^3$ invariant so it cannot conjugate into $SU(2)$; otherwise it would fix a point of $P$ and therefore be conjugate into $SO(2)$ contrary to the irreduciblity of $\rho$. Thus $\Psi(\chi_\rho) \not \in X_{SU(2)}(\pi_1(M))$. It follows from Proposition \ref{su2} that $\Psi(\chi_\rho) \in \{(a,a,\frac{a}{a-1}) : a \in \mathbb R \setminus \{1\}, \kappa(a,a,\frac{a}{a-1}) \not \in [-2, 2]\}$. Since $\kappa(a , a, \frac{a}{a-1}) \geq -2$ for all $a \in \mathbb R \setminus \{1\}$, $\Psi(\chi_\rho) \in Y_0$. 

Finally observe that if $\chi_\rho \in X_{SL_2(\mathbb R)}(\pi_1(M))$ and $\rho(\lambda)$ has eigenvalues $\zeta, \zeta^{-1} \in \mathbb C^*$, then $\zeta + \zeta^{-1} = \hbox{trace}(\rho(\lambda)) = \kappa(a,a, \frac{a}{a-1}) \geq 2$. Thus $\zeta$ is a positive real number. 
\end{proof}

\begin{proof}[Proof of Proposition \ref{section}]
The properties of $\widetilde{SL_2} = \Delta \times \mathbb R$ listed just before the statement of Proposition \ref{section} will be used without direct reference in the proof. 

Since $\lambda$ is a commutator, if $\tilde \rho(\lambda) = (z, \omega)$ then $-\frac{3\pi}{2} < \omega < \frac{3\pi}{2}$. On the other hand, since the eigenvalues of $\psi (\tilde \rho(\lambda))$ are positive there is an even integer $2k$ such that $|\omega - 2k\pi| < \frac{\pi}{2}$. Hence $-\frac{\pi}{2} < \omega < \frac{\pi}{2}$ and therefore $\tilde \rho(\lambda)$ is conjugate into the subgroup $(-1,1) \times \{0\}$ of $\widetilde{SL_2}$. Without loss of generality we assume $\tilde \rho(\lambda) \in (-1,1) \times \{0\}$. Since $\tilde \rho(t)$ commutes with $\tilde \rho(\lambda)$, there is an integer $j$ such that $\tilde \rho(t) \in (-1,1) \times \{j\pi\}$. Fix $\phi_0 \in H^1(M; \mathcal{K}) = \hbox{Hom}(\pi_1(M), \mathcal{K})$ such that $\phi_0(t) = (0, -j\pi)$ and set $\tilde \rho' = \phi \cdot \widetilde \rho$. From the multiplication on $\widetilde{SL_2}$ (cf. \cite[page 764]{Khoi2003}) we see that $\tilde \rho'(t) \subset (-1,1) \times \{0\}$. Then $\tilde \rho'(\pi_1(\partial M)) \subset (-1,1) \times \{0\}$, 
which completes the proof. 
\end{proof}

\section{Left-orderability and representations with values in $\hbox{Homeo}_+(S^1)$} \label{sec: square trivial}

In this section we prove Theorem \ref{thm:square trivial}, Corollary \ref{cor:alt square-trivial} and Corollary \ref{cor:trace-field}. 

\begin{proof}[Proof of Theorem \ref{thm:square trivial}] Let $K$ be a prime knot in the $3$-sphere and suppose that $\pi_1(\Sigma(K))$ is not left-orderable. Let $M_K$ denote the exterior of $K$ and fix a homomorphism $\rho: \pi_1(M_K) \to \hbox{Homeo}_+(S^1)$ such that $\rho(\mu^2) = 1$ for each meridional class $\mu$ in $\pi_1(M_K)$. We will show that the image of $\rho$ is either trivial or isomorphic to $\mathbb Z/2$.  

Let $p: \widetilde M_K \to M_K$ be the $2$-fold cover determined by the epimorphism $\pi_1(M_K) \to \mathbb Z/2$. Then $\Sigma(K)$ is obtained by filling the boundary component of $\widetilde M_K$ along the inverse image of a meridional curve of $M_K$. 

The Euler class $e(\rho)$ of $\rho$ (\cite[Section 6.2]{Ghys2001}) is contained in $H^2(M_K; \mathbb Z) \cong 0$, and so is zero. Hence if $\widetilde \rho = \rho| \pi_1(\widetilde M_K)$, then $e(\widetilde \rho) = p^*(e(\rho)) = 0$. Our assumptions imply that $\widetilde \rho$ induces a homomorphism $\psi: \pi_1(\Sigma(K)) \to \hbox{Homeo}_+(S^1)$ such that if $i: \widetilde M_K \to \Sigma(K)$ is the inclusion, $e(\widetilde \rho) = i^*(e(\psi))$. 

Let $\widehat K = p^{-1}(K) \subset \Sigma(K)$ and let $N(\widehat K)$ denote a closed tubular neighbourhood of $\widehat K$ in $\Sigma(K)$. Note that  $H_1(\widetilde M_K; \mathbb Z) \cong H_1(\Sigma(K); \mathbb Z) \oplus H_2(\Sigma(K), \widetilde M_K; \mathbb Z) \cong H_1(\Sigma(K); \mathbb Z) \oplus \mathbb Z$ where the $\mathbb Z$ factor is generated by the boundary of a meridian disk of $N(\widehat K)$. It follows that the connecting homomorphism $H^1(\widetilde M_K; \mathbb Z) = \mbox{Hom}(H_1(\widetilde M_K; \mathbb Z), \mathbb Z) \to H^2(\Sigma(K), \widetilde M_K; \mathbb Z) = \mbox{Hom}(H_2(N(\widehat K), \partial N(\widehat K); \mathbb Z); \mathbb Z)$ is surjective. Thus $i^*: H^2(\Sigma(K); \mathbb Z) \to H^2(\widetilde M_K; \mathbb Z)$ is injective. Then as $i^*(e(\psi)) = e(\widetilde \rho) = 0$, $e(\psi) = 0$. In particular, $\psi$ lifts to a homomorphism $\widetilde \psi : \pi_1(\Sigma(K)) \to \widetilde{\hbox{Homeo}}_+(S^1) \leq \hbox{Homeo}_+(\mathbb R)$ \cite[Section 6.2]{Ghys2001}. Since $K$ is prime, $\Sigma(K)$ is irreducible. Further, $\hbox{Homeo}_+(\mathbb R)$ is left-orderable \cite[Theorem 7.1.2]{MR1977}, and therefore as $\pi_1(\Sigma(K))$ is not left-orderable, $\widetilde \psi$ is the trivial homomorphism \cite[Theorem 1.1]{BRW2005}. The same conclusion then holds for $\widetilde \rho$ and hence the image of $\rho$ is a cyclic group of order dividing $2$. 
\end{proof} 

\begin{proof}[Proof of Corollary \ref{cor:alt square-trivial}] Let $K$ be an alternating knot and $\rho: \pi_1(S^3 \setminus K) \to \hbox{Homeo}_+(S^1)$ a homomorphism such that $\rho(\mu^2) = 1$ for each meridional class $\mu$ in $\pi_1(S^3 \setminus K)$. Corollary \ref{cor:alt square-trivial} clearly holds when $K$ is trivial, so suppose it isn't and let $K_1, K_2, \ldots, K_n$ be its prime factors. Each $K_i$ is alternating and 
$$\pi_1(S^3 \setminus K) \cong \pi_1(S^3 \setminus K_1) *_{\mu_1 = \mu_2} \pi_1(S^3 \setminus K_2) *_{\mu_2' = \mu_3} \ldots *_{\mu_{n-1}' = \mu_n} \pi_1(S^3 \setminus K_n)$$
where $\mu_i, \mu_i'$ are meridional classes of $K_i$. Further, a meridional class of $K_i$ is a meridional class of $K$. Hence Theorem \ref{thm:square trivial} implies that for each $i$, $\rho(\pi_1(S^3 \setminus K_i))$ is a subgroup of $\mathbb Z/2$. Then $\rho|\pi_1(S^3 \setminus K_i)$ factors through $H_1(S^3 \setminus K_i)$ and therefore $\rho(\pi_1(S^3 \setminus K_i))$ is generated by $\rho(\mu_i)$ and $\rho(\mu_i) = \rho(\mu_i')$ for $2 \leq i \leq n-1$. Given our presentation for $\pi_1(S^3 \setminus K)$, Corollary \ref{cor:alt square-trivial} is a straightforward consequence of these observations. 
\end{proof} 

\begin{proof}[Proof of Corollary \ref{cor:trace-field}] 

Finally, consider the hypotheses of Corollary \ref{cor:trace-field} and let $M_K$ be the exterior of $K$.  If the trace field of $\pi_1(\mathcal{O}_K(2))$ has a real
embedding, then it determines an irreducible representation of $\pi_1(M_K)$ which conjugates into $PSL_2(\mathbb R)$ or $PSU(2)$ \cite[Definition 7.2.1]{MR2003}. The former is ruled out by Corollary \ref{cor:alt square-trivial}. Thus Corollary \ref{cor:trace-field} holds. 
\end{proof}

\bibliographystyle{plain}
\bibliography{BGW}

\begin{thebibliography}{10}

\bibitem{Berge}
J. Berge,
\newblock {\em Some knots with surgeries yielding lens spaces}, 
\newblock unpublished manuscript.

\bibitem{Boyer2002}
S. Boyer,
\newblock {\em Dehn surgery on knots}, 
\newblock In {\bf Handbook of Geometric Topology}, North-Holland, Amsterdam, 2002, 165--218.
  

\bibitem{BRW2005}
S. Boyer, D. Rolfsen, and B. Wiest,
\newblock {\em Orderable 3-manifold groups}, 
\newblock Ann. Inst. Fourier {\bf 55} (2005), 243--288.

\bibitem{BH1972}
R. Burns and V. Hale, 
\newblock {\em A note on group rings of certain torsion-free groups}, 
\newblock Can.\ Math. Bull. {\bf 15} (1972), 441--445.

\bibitem{CD2003}
D. Calegari and N. Dunfield,
\newblock {\em Laminations and groups of homeomorphisms of the circle},  
\newblock Inv.\ Math. {\bf 152} (2003), 149--2004.

\bibitem{CLW2011}
A. Clay, T. Lidman and L. Watson, 
\newblock {\em Graph manifolds, left-orderability and amalgamation},  
\newblock arxiv:1106.0486.

\bibitem{CW2010}
A. Clay and L. Watson, 
\newblock {\em Left-orderable fundamental groups and Dehn surgery},  
\newblock arXiv:1009.4176.

\bibitem{CW2011}
------------,  
\newblock {\em On cabled knots, Dehn surgery, and left-orderable fundamental groups}, 
\newblock arXiv:1103.2358.

\bibitem{Conrad1959}
P. Conrad,
\newblock {\em Right-ordered groups}, 
\newblock Michigan Math. J. {\bf 6} (1959), 267--275.

\bibitem{Culler1986}
M. Culler,
\newblock {\em Lifting representations to covering groups}, 
\newblock Adv. in Math. {\bf 59} (1986), 64--70.

\bibitem{Ghys2001}
E. Ghys,
\newblock {\em Groups acting on the circle}, 
\newblock L'Enseignement Math. {\bf 22} (2001), 329--407.

\bibitem{Goldman1988}
W. Goldman,
\newblock {\em Topological components of spaces of representations}, 
\newblock Inv. Math. {\bf 93} (1998), 557--607.

\bibitem{GAM1993}
F. Gonz\`alez-Acuna and J.-M. Montesinos-Amilibia,
\newblock {\em On the character variety of group representations in ${SL}(2,C)$ and ${PSL}(2,C)$}, 
\newblock Math. Z.  {\bf 214} (1993), 627-652.

\bibitem{Greene}
J. Greene,
\newblock {\em Alternating links and left-orderability},
\newblock {preprint}.

\bibitem{HO1989}
A. Hatcher and U. Oertel,
\newblock {\em Boundary slopes for Montesinos knots},
\newblock Topology {\bf 28} (1989), 453--480.


\bibitem{HT1985}
A. Hatcher and W. Thurston,
\newblock {\em Incompressible surfaces in 2-bridge knot complements},
\newblock Inv. Math. {\bf 79} (1985), 225--246.

\bibitem{Heil1974}
W. Heil,
\newblock {\em Elementary surgery on {S}eifert fiber spaces},
\newblock Yokohama Math. J. {\bf 47} (1974), 135--139.


\bibitem{HS1985}
J. Howie and H. Short,
\newblock {\em The band-sum problem},
\newblock J. London Math. Soc. (2), {\bf 31} (1985), 571--576.

\bibitem{Ito2011}
T. Ito,
\newblock {\em Non-left-orderable double branched coverings},
\newblock {preprint, arXiv:1106.1499}.

\bibitem{Juhasz2006}
A. Juh{\'a}sz,
\newblock {\em Holomorphic discs and sutured manifolds},
\newblock Algebr. Geom. Topol. {\bf 6} (2006), 1429--1457.


\bibitem{Khoi2003}
V. Th. Khoi,
\newblock {\em A cut-and-paste method for computing the Seifert volumes},
\newblock Math. Ann. {\bf 326} (2003), 759--801.

\bibitem{Levine2010}
A. Levine,
\newblock {\em Knot doubling operators and bordered Heegaard Floer homology},
\newblock {preprint, arXiv:1008.3349}.

\bibitem{LL}
A. Levine and S. Lewallen,
\newblock private communication.

\bibitem{LOT2010}
R. Lipshitz, P. Ozsv{\'a}th, and D. Thurston,
\newblock {\em Bimodules in bordered Heegaard Floer homology},
\newblock {preprint, arXiv:1003.0598}.

\bibitem{LOT2009}
------------,
\newblock {\em Bordered Heegaard Floer homology},
\newblock {preprint, arXiv:0810.0687}.

\bibitem{LS2007-iii}
P. Lisca and A. Stipsicz,
\newblock {\em Ozsv\'ath-{S}zab\'o invariants and tight contact 3-manifolds. {III}},
\newblock J. Symplectic Geom. {\bf 5} (2007), 357--384.


\bibitem{MR2003}
C. MacLachlan and A. Reid,
\newblock {\bf The Arithmetic of Hyperbolic $3$-Manifolds},  
\newblock GTM {\bf 219}, Springer-Verlag New York Inc., 2003.

\bibitem{Menasco1982}
W. Menasco,  
\newblock {\it Closed incompressible surfaces in alternating knot and link complements}, 
\newblock Topology {\bf 23} (1984), 37--44


\bibitem{MS1984}
J. Morgan and P. Shalen, 
\newblock {\it Valuations, trees, and degenerations of hyperbolic structures, I}, 
\newblock Annals Math {\bf 23} (1984), 401--476.


\bibitem{MR1977} 
R. Mura and A. Rhemtulla, 
\newblock {\bf Orderable Groups}, 
\newblock Dekker Lecture Notes in Pure and Appl. Math. {\bf 27}, 1977.  




\bibitem{OSz2004-genus}
P. Ozsv{\'a}th and Z. Szab{\'o}.
\newblock {\it Holomorphic disks and genus bounds},
\newblock Geom. Topol. {\bf 8} (2004), 311--334 (electronic).

\bibitem{OSz2004-properties}
------------,
\newblock {\it Holomorphic disks and three-manifold invariants: properties and
  applications},
\newblock Ann. of Math. {\bf 159} (2004), 1159--1245.

\bibitem{OSz2004-invariants}
------------,
\newblock {\it Holomorphic disks and topological invariants for closed
  three-manifolds},
\newblock Ann. of Math. {\bf 159} (2004), 1027--1158.

\bibitem{OSz2004-knot}
------------,
\newblock {\it Holomorphic disks and knot invariants},
\newblock Adv. Math. {\bf 186} (2004), 58--116.

\bibitem{OSz2005-survey}
------------,
\newblock {\it On {H}eegaard diagrams and holomorphic disks},
\newblock In {\bf European Congress of Mathematics}, Eur. Math.
  Soc., Z\"urich, 2005, 769--781.

\bibitem{OSz2005-lens}
------------,
\newblock {\it On knot {F}loer homology and lens space surgeries},
\newblock Topology {\bf 44} (2005), 1281--1300.

\bibitem{OSz2005-branch}
------------,
\newblock {\it On the {H}eegaard {F}loer homology of branched double-covers},
\newblock Adv. Math. {\bf 194} (2005), 1--33.

\bibitem{OSz2006-lectures}
------------,
\newblock {\it Lectures on {H}eegaard {F}loer homology},
\newblock In {\bf Floer homology, gauge theory, and low-dimensional topology},
  volume~5 of {\em Clay Math. Proc.}, Amer. Math. Soc.,
  Providence, RI, 2006, pages 29--70.

\bibitem{Patton1995}
R. Patton, 
\newblock {\em Incompressible punctured tori in the complements of alternating knots}, 
\newblock Math. Ann. {\bf 301} (1995), 1--22.


\bibitem{Peters2009}
T. Peters,
\newblock {\em On L-spaces and non left-orderable 3-manifold groups},
\newblock arxiv:0903.4495.

\bibitem{Przytycki1998} 
J. H. Przytycki, 
\newblock {\em 3-coloring and other elementary invariants of knots}, 
\newblock {\bf Knot Theory}, Banach Center Publications {\bf 42}, Institute of Mathematics, Polish Academy of Sciences, Warsaw 1998, 275--295.

\bibitem{Rasmussen-PhD}
J. Rasmussen,
\newblock {\em Floer homology and knot complements},
\newblock PhD thesis, {Harvard University}, 2003.

\bibitem{Roberts1995}
R. Roberts,
\newblock {\em Constructing taut foliations},  
\newblock Comm. Math. Helv. {\bf 70} (1995), 516--545.
  
\bibitem{Scott1983}
P. Scott,
\newblock {\em The geometries of {$3$}-manifolds},
\newblock Bull. London Math. Soc. {\bf 15} (1983), 401--487.

\bibitem{Vinogradov1949}
A.  A. Vinogradov,
\newblock {\em On the free product of ordered groups},
\newblock  Mat. Sbornik N.S. {\bf 25} (1949), 163--168.

\bibitem{Wada1992} 
M. Wada, 
\newblock {\em Group invariants of links}, 
\newblock Topology {\bf 31} (1992), 399--406.

\bibitem{Watson2008}
L. Watson,
\newblock {\em Surgery obstructions from {K}hovanov homology},
\newblock arxiv:0807.1341.

\bibitem{Watson-PhD}
------------,
\newblock {\em Involutions on 3-manifolds and {K}hovanov homology},
\newblock PhD thesis, {Universit\'e du Qu\'ebec \`a Montr\'eal}, 2009.

\bibitem{Wood} 
J. W. Wood, 
\newblock {\em Bundles with totally disconnected structure group}, 
\newblock Comment. Math. Helv. {\bf 46}
(1971), 257--273. 


\end{thebibliography}

\end{document}